\documentclass[a4paper,11pt,pdftex]{article}

\usepackage{verbatim}
\usepackage{amsmath}
\usepackage{amssymb}
\usepackage{times}
\usepackage{amsthm}
\usepackage{xcolor}
\usepackage{enumerate}
\usepackage{enumitem}
\setlist[enumerate,1]{label={\normalfont(\roman*)}}
\usepackage{mathrsfs}
\usepackage[pdftex]{graphicx}
\usepackage{float}
\usepackage{wrapfig}
\usepackage{layout}
\usepackage{cite}
\usepackage{braket}
\usepackage[draft=false, setpagesize=false, pdfstartview=FitH,
colorlinks=true, linkcolor=blue, citecolor=magenta, pagebackref=true, bookmarks=false]
{hyperref}
 
\usepackage{lineno}
 \newcommand*\patchAmsMathEnvironmentForLineno[1]{
   \expandafter\let\csname old#1\expandafter\endcsname\csname #1\endcsname
   \expandafter\let\csname oldend#1\expandafter\endcsname\csname end#1\endcsname
   \renewenvironment{#1}
     {\linenomath\csname old#1\endcsname}
     {\csname oldend#1\endcsname\endlinenomath}}
 \newcommand*\patchBothAmsMathEnvironmentsForLineno[1]{
   \patchAmsMathEnvironmentForLineno{#1}
   \patchAmsMathEnvironmentForLineno{#1*}}
 \AtBeginDocument{
 \patchBothAmsMathEnvironmentsForLineno{equation}
 \patchBothAmsMathEnvironmentsForLineno{align}
 \patchBothAmsMathEnvironmentsForLineno{flalign}
 \patchBothAmsMathEnvironmentsForLineno{alignat}
 \patchBothAmsMathEnvironmentsForLineno{gather}
 \patchBothAmsMathEnvironmentsForLineno{multline}
 }

\theoremstyle{definition}
\newtheorem{definition}{Definition}[section]
\theoremstyle{plain}
\newtheorem{theorem}[definition]{Theorem}

\newtheorem{lemma}[definition]{Lemma}
\newtheorem{proposition}[definition]{Proposition}
\newtheorem{remark}[definition]{Remark}

\usepackage[truedimen,margin=15truemm, bottom=20truemm]{geometry}

\numberwithin{equation}{section}

\usepackage{authblk}
 
\title
	{\bf Uniqueness of bound states for sublinear elliptic equations }

  \author[a]{Chengxiang Zhang\thanks{  zcx@bnu.edu.cn}}
  \author[b]{Xu Zhang\thanks{ zhangxu0725@csu.edu.cn, darkblue1121@163.com, corresponding author  }}

  \affil[a]{\footnotesize Laboratory of Mathematics and Complex Systems (Ministry of Education), School of Mathematical Sciences, 
  
  Beijing Normal University, Beijing 100875, P. R. China}
  \affil[b]{\footnotesize  School of Mathematics and Statistics, HNP-LAMA, Central South University, Changsha 410083, P. R. China}
  
  \date{}
  
\begin{document}
  \maketitle
  \begin{minipage}{16.5cm}
{\small {\bf Abstract:}
	We investigate the uniqueness of radial bound state solutions to the sublinear elliptic equation
	\[
	\begin{cases}
		-\Delta u - u + |u|^{q-2}u = 0 & \text{in } \mathbb{R}^n, \\
		u(x) \to 0 & \text{as } |x| \to \infty,
	\end{cases}
	\]
	where $q\in(1,2)$ and $n\geq 2$. A distinctive feature of this problem is the non-Lipschitz singularity of the nonlinearity at the origin, which gives rise to compactly supported ground states and bound states. Using a shooting argument together with a detailed analysis of the linearized variation with respect to the initial value, we prove that for every prescribed integer $k\geq 1$, the equation admits exactly one radial bound state solution with $k$ simple zeros, up to sign reflection and spatial translation. In addition, our analysis yields a classification of radial solutions according to the initial value and describes their behavior near the finite support boundary.

    \medskip {\bf Keywords:}  sublinear elliptic equation;   uniqueness of bound state   solutions.
    
    \medskip {\bf Mathematics Subject Classification:} 35B05 $\cdot$ 35B40 $\cdot$ 35J66 $\cdot$ 35Q51
    }
    
    \end{minipage}

\section{Introduction and main results}
For $n\geq 2$ and $q\in(1,2)$, we study the sublinear elliptic equation
\begin{equation}\label{eq:sublinear}
-\Delta u - u + |u|^{q-2}u = 0  \quad \text{in } \mathbb{R}^n.
\end{equation}
A solution $u$ is called a \emph{ground state} if $u(x)\to 0$ as $|x|\to\infty$, it does not change sign,
and the set $\{x \mid u(x)\neq 0\}$ is connected.
A \emph{bound state} is a solution that changes sign and also tends to $0$ at infinity.
Equation \eqref{eq:sublinear} appears naturally when studying the blow‑up behavior of the porous medium equation with a source term,
$u_t = \Delta u^m + u^m$ ($m>1$). 
It is well known that solutions to this parabolic problem blow up in finite time $T$.
Looking for self‑similar solutions of the form $u(x,t) = (T-t)^{-1/(m-1)} w(x)$ leads to the elliptic equation
$\Delta w^m + w^m - \frac{1}{m-1}w = 0$. Through the transformation $v = w^m$ (or a suitable rescaling)
this reduces to the sublinear equation \eqref{eq:sublinear} with $q = 1 + 1/m \in (1,2)$.
The blow-up set of the parabolic problem then corresponds exactly to the support of the solution $v$ (\cite{Cortazar1998,Cortazar2002,Gui1995}).

A distinctive feature of \eqref{eq:sublinear} is that all its ground states and bound states have \emph{compact support}.
This is a consequence of the non‑Lipschitz singularity of the nonlinearity $f(u)=u-|u|^{q-2}u$ at $u=0$;
any solution that reaches zero together with its derivative must vanish identically after that point, a phenomenon known as the compact support principle \cite{pucciStrongMaximumPrinciple2004}.

The qualitative properties of solutions to \eqref{eq:sublinear} have been studied extensively.
For the ground state, Gui~\cite{Gui1995} and Cort\'azar, Elgueta \& Felmer~\cite{Cortazar1996} proved that its support must be a ball
and the solution is radially symmetric. Their arguments overcame the lack of a classical Hopf boundary lemma by employing blow‑up techniques and refined moving plane methods.
Using these symmetry results, Cort\'azar, Del Pino \& Elgueta~\cite{Cortazar1998,Cortazar2002} completely characterized the regional blow‑up set of the associated porous medium equation, showing it consists of a finite union of disjoint balls.
More recently, Ikoma, Tanaka, Wang \& Zhang~\cite{Ikoma2022} gave a comprehensive analysis of the ground state. They  
established the precise asymptotic behavior of the support radius as $q\to1^+$ and $q\to2^-$, and studied convergence properties of the infinitely many nodal bound states.

While the existence and symmetry of the ground state are well understood, the uniqueness of \emph{nodal} bound states
(solutions with a prescribed number of sign changes) has remained open. The non-Lipschitz singularity at $u=0$ prevents a direct application of standard ODE uniqueness theorems and requires a careful study of the solution's behavior near its support radius.
The main purpose of this paper is to establish the uniqueness of radial bound states with a prescribed number of nodes for the sublinear equation \eqref{eq:sublinear}. We prove that for any integer $k\ge 1$, there exists exactly one radial bound state solution with exactly $k$ simple zeros.

The systematic investigation of semilinear elliptic equations of the form
\[
\Delta u + f(u)=0\quad\text{in }\mathbb{R}^n,\qquad u\in H^1(\mathbb{R}^n),\ u\not\equiv 0,
\]
was initiated in the celebrated works of Berestycki and Lions~\cite{BL1,BL2}. Under natural structural conditions on $f$, they proved the existence of infinitely many radial solutions: a positive, decreasing ground state $u_0$ and, for each integer $k\ge 1$, a sign‑changing bound state $u_k$ possessing exactly $k$ simple zeros. Their theory covers in particular the superlinear model
\[
f(u)=-u+|u|^{p-1}u,\qquad 1<p<\frac{n+2}{n-2}\;(n\ge 3),  
\]
which appears in the search for solitary waves of the nonlinear Klein--Gordon and Schr\"odinger equations~\cite{Tao}.
For this class of nonlinearities, Berestycki and Lions conjectured that for each $k$ there is exactly one radial bound state with $k$ nodes, up to translations and reflections.

The uniqueness of the ground state was established by Coffman~\cite{Coffman} for $n=3$, $p=3$, and later by Kwong~\cite{Kwong} and McLeod--Serrin~\cite{McLeodSerrin} for all admissible $p$ and $n\geq 2$; simplified proofs and generalizations were given by many authors (see~\cite{PS,ChenLin,Frank} and the references therein). The uniqueness of bound states, however, proved to be much more delicate. For the cubic equation in $\mathbb R^3$, the question was explicitly posed by Hastings and McLeod~\cite{HastingsMcLeod} as an open problem. Significant partial results were obtained by Cort\'azar, Garc\'ia‑Huidobro and Yarur~\cite{CGY} for $n=2,3,4$ with restricted exponents, by Ao, Wei and Yao~\cite{AoWeiYao} near the critical exponent, and by Cohen, Li and Schlag~\cite{CohenLiSchlag} who used computer‑assisted proofs to show uniqueness of the first twenty bound states in three dimensions.

A complete solution of the conjecture for the model case $f(u)=-u+|u|^{p-1}u$ in dimensions $n\ge 3$ was recently achieved by Tang~\cite{Tang2025}. His approach is based on a shooting argument for the radial initial value problem
\[
u''+\frac{n-1}{r}u'+f(u)=0,\qquad u(0)=\alpha>0,\; u'(0)=0,
\]
and a delicate analysis of the variation $v=\partial_\alpha u$. The core of the proof is a ``phase transition lemma'' which shows that $v$ possesses exactly one zero between two consecutive zeros of $u$, and that certain auxiliary energy--type functions remain positive throughout the iteration. The arguments of~\cite{Tang2025} rely on the algebraic structure of $f$ and on the dimension $n\ge 3$ at several points, leaving the planar case $n=2$ open for the superlinear problem.

In contrast to the superlinear regime, the nonlinearity
\[
f(u)=u-|u|^{q-2}u,\qquad 1<q<2,
\]
is not locally Lipschitz at $u=0$ and fails to satisfy the classical condition $-\infty<\limsup_{s\to0}f(s)/s<0$ of~\cite{BL1,BL2}. As a consequence, ground states and bound states of~\eqref{eq:sublinear} do not decay exponentially at infinity but instead possess a \emph{finite support radius} $z_u$ at which they vanish together with their first two derivatives. The asymptotic behavior near the boundary is
\[
u(r)\sim C_{q}\,(z_u-r)^{\frac{2}{2-q}}\quad\text{as } r\uparrow z_u .
\]

Despite the singular nature of $f$, the existence of infinitely many radial bound states for~\eqref{eq:sublinear} is known~\cite{Balabane2003, Gazzola2000}, and the uniqueness of the ground state has been established~\cite{PS,Serrin2000}. However, the uniqueness of excited bound states has not been addressed before. The purpose of this paper is to give a complete affirmative answer, valid in all dimensions $n\ge 2$.
   
\medskip
Radial solutions of~\eqref{eq:sublinear} correspond to solutions of the initial value problem
\begin{equation}\label{eqsub}
u''+\frac{n-1}{r}u'+u-|u|^{q-2}u=0,\qquad u(0)=\alpha>0,\; u'(0)=0 .
\end{equation}
Let $\alpha_*=\bigl(\frac{2}{q}\bigr)^{1/(2-q)}$ be the unique positive zero of the potential $F(u)=\frac12 u^2-\frac1q|u|^q$, and let $\alpha^*$ be the unique positive zero of the Pohozaev function $2nF(u)-(n-2)uf(u)$. By the compact support principle~\cite{pucciStrongMaximumPrinciple2004}, a solution may develop a double zero after which it is no longer uniquely determined. For a solution $u(r)$, we define $z_u\in\mathbb{R}^+\cup\{+\infty\}$ as its \emph{first} double zero (with $z_u=+\infty$ if no double zero occurs). As will be proved in Lemma~\ref{lem24}, $u(r)$ is uniquely determined on $r\in(0,z_u)$ by its initial value, and all zeros of $u$ in $(0,z_u)$ are simple.

Our main theorem gives a complete classification of all radial solutions according to the initial value $\alpha$.

\begin{theorem}\label{thm:main}
Let $n\ge 2$, $1<q<2$, and let $u(r,\alpha)$ be the solution of the above initial value problem. There exists a strictly increasing sequence
\[
\alpha_0<\alpha_1<\alpha_2<\cdots,\qquad \alpha_0>\alpha^*,\quad \lim_{k\to\infty}\alpha_k=+\infty,
\]
such that:
\begin{enumerate}
\item[(i)] $u(r,\alpha_0)$ is the unique ground state of~\eqref{eqsub}, extended by zero beyond its support radius $z_u$.
\item[(ii)] For each $k\ge 1$, $u(r,\alpha_k)$ is the unique bound state with exactly $k$ simple zeros in its support $(0,z_u)$. The solution has exactly one critical point between any two consecutive zeros and exactly one critical point behind the last zero; at each critical point one has $|u|>\alpha_*$. The support radius $z_u$ is finite, and
\[
|u(r,\alpha_k)| (z_u-r)^{-\frac{2}{2-q}}\to  C_{q}:=\left( \frac{(2-q)^2}{2q} \right)^{\frac{1}{2-q}}\,\quad\text{as } r\uparrow z_u .
\]
\item[(iii)] If $\alpha=1$, then $u(r,\alpha)\equiv 1$. If $\alpha<\alpha_0$ and $\alpha\neq 1$, then $u(r,\alpha)>0$ for all $r>0$ and oscillates about the constant solution $u\equiv1$.
\item[(iv)] If $\alpha\in(\alpha_k,\alpha_{k+1})$ for some $k\ge 0$, then $u(r,\alpha)$ is a nodal solution with exactly $k+1$ simple zeros in $(0,+\infty)$, and behind its last simple zero it oscillates about $1$ or $-1$.
\end{enumerate}
\end{theorem}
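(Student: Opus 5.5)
The proof is a shooting argument in $\alpha$, modelled on Tang's approach~\cite{Tang2025} but adapted to the compact-support setting. The basic tool is the energy $E(r)=\frac12u'^2+F(u)$, which satisfies $E'=-\frac{n-1}{r}u'^2\le 0$.

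\textbf{Step 1: the three regimes.} First I would classify $\alpha$ into three regimes.
\begin{itemize}
\item If $\alpha<\alpha_*$ (and $\alpha\ne 1$), then $E(0)<0$. Since $E$ is nonincreasing, $u$ can never reach $0$, because $F(0)=0$. So $u$ stays in the positive well and oscillates about $1$.
\item For $\alpha$ large, a rescaling $u=\alpha w(\alpha^{(2-q)/2}\cdot)$ shows that $u$ behaves like a Bessel-type solution of $w''+\frac{n-1}{r}w'+w=0$ on long intervals. Hence $u$ has arbitrarily many simple zeros, with $|u|>\alpha_*$ at the intermediate critical points.
\item By Lemma~\ref{lem24}, solutions are well defined up to $z_u$, and zeros in $(0,z_u)$ are simple.
\end{itemize}

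Define $N(\alpha)$ as the number of zeros in $(0,z_u)$. Let $A_k$ be the set of $\alpha$ with $N(\alpha)\ge k+1$, i.e. with a $(k+1)$-th simple zero before any double zero. Continuous dependence on $\alpha$ up to the first double zero shows $A_k$ is open. Set $\alpha_k=\inf A_k$. At $\alpha_k$ the solution has exactly $k$ simple zeros and then ends at a double zero. It cannot instead get trapped oscillating about $\pm1$, since that is an open condition: energy strictly below $0$ after the last zero. The Pohozaev identity gives $\alpha_0>\alpha^*$. The local asymptotics $|u|\sim C_q(z_u-r)^{2/(2-q)}$ come from balancing $u''\approx |u|^{q-2}u$ near a double zero, with the $\frac{n-1}{r}u'$ term of lower order; this is a standard local analysis. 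Together these give existence and items (iii) and (iv), provided the structure of $A_k$ is as stated. That structure follows from uniqueness.

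\textbf{Step 2: uniqueness via $v=\partial_\alpha u$.} The core is to show that at any $\alpha$ where $u$ ends at a double zero after exactly $k$ simple zeros, the crossing is transversal. Concretely, $v=\partial_\alpha u$ solves
$$v''+\tfrac{n-1}{r}v'+f'(u)v=0,\qquad v(0)=1,\ v'(0)=0.$$
I would show that $v$ has exactly one zero between consecutive zeros of $u$, and one zero in the final hump. Then, as $r\uparrow z_u$, $v$ has a definite sign relative to $u$, namely $uv<0$ near $z_u$: increasing $\alpha$ pushes the tail through zero and creates a new simple zero. So the set of such $\alpha$ is isolated, and the zero count jumps by exactly one from left to right. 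This forces $N$ to be monotone, and each $k$ is attained at a single $\alpha_k$.

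To count the zeros of $v$, I would use Sturm comparison against the functions $ru'$ and $u$. These satisfy
$$L(ru')=-2f(u),\qquad L u=f(u)-f'(u)u,$$
where $L$ is the linearized operator. Together with a phase-transition argument on Wronskian-type auxiliary functions such as $W=r^{n-1}(v\,\phi'-v'\phi)$ for $\phi=ru'+\beta u$, with $\beta$ chosen as in the ground-state uniqueness proofs~\cite{PS,Serrin2000}, the argument is iterated hump by hump. The sign information needed to iterate is the positivity of these auxiliary quantities, which relies on $|u|>\alpha_*$ at each critical point.

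\textbf{Main obstacle.} The hardest step is the behaviour of $v$ near the singular endpoint $z_u$, where $f'(u)=1-(q-1)|u|^{q-2}\to-\infty$. The linearization is singular there, so one must show that $v$ has a limit or a controlled blow-up with a definite sign as $r\uparrow z_u$. My first attempt would be to change variables to $s=(z_u-r)$ and write $u=C_q s^{2/(2-q)}(1+o(1))$. The equation for $v$ then becomes an Euler-type equation, $v''\approx c\,s^{-2}v$, whose indicial roots are explicit. From these roots one reads off that the dominant branch of $v$ is comparable to $-u'/\,u'(\cdot)$-type behaviour, which encodes the shift of $z_u$ in $\alpha$. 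The sign of its coefficient is then obtained from the Wronskian identities of Step 2 evaluated up to $z_u$.
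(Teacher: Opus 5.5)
\textbf{Gap 1: the induction across phases is missing.} Your architecture matches the paper's: the zero count $\mathcal N(\alpha)$, the variation $v=\partial_\alpha u$, one zero of $v$ per phase, and a transversal crossing at each bound state that forces $\mathcal N$ to be monotone. But the step that carries the whole proof is only named, not supplied. The claim that $v$ has exactly one zero in each phase $(c_{i-1},c_i]$ is an induction. It needs a quantity that is positive at $c_{i-1}$, forces a single sign change of $v$ on $(c_{i-1},c_i]$, and is positive again at $c_i$. Your plan does not identify one. A ground-state Wronskian $W=r^{n-1}(v\phi'-v'\phi)$ with $\phi=ru'+\beta u$ and a fixed constant $\beta$ controls a single monotone hump, and nothing in the proposal says how its sign survives the passage through $z_i$ and the next critical point.

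In the paper, $Q=-r^{n-1}\bigl(v(ru')'-v'ru'\bigr)$ and $M=r^{n-1}(u'v-uv')$ are exactly your two Wronskians. The inductive invariant, however, is $Q(c_{i-1})>0$, $M(c_{i-1})>0$, $T_2(c_{i-1})>0$, where $T_2=Q-g_2(u)M$ and $g_2(u)=2F(u)/(uf(u)-2F(u))$ is a \emph{$u$-dependent} weight, not a constant. Re-establishing $T_2>0$ on $[\bar b_i,c_i]$ uses:
the bridging functions $B_a$ with $a=g_2(U)$, fitted to the level $U$ of the minimum of $T_2$;
the comparison of $Q/(r^{n-1}|u'|)$ at the two points of Phase $i$ where $|u|$ takes the same value $\mu\ge\alpha_*$;
the positivity of $P$, $P_1$ and $E$, and the strict decrease of $P/r^n$;
and, for $n\ge3$, the integral estimate of Lemma~\ref{lem:residual-integral-alln-sublinear}.
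None of this follows from $|u|>\alpha_*$ at the critical points, which is the only sign information you invoke. Since this is precisely the long-open part of the bound-state problem, the proposal does not prove the uniqueness half of the theorem.

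\textbf{Gap 2: the endpoint analysis picks the wrong branch.} With $|u|\sim C_q(z_u-r)^{2/(2-q)}$ one has $f'(u)\sim-\frac{2q(q-1)}{(2-q)^2}(z_u-r)^{-2}$, so the indicial roots are $\frac{q}{2-q}$ and $\frac{2-2q}{2-q}<0$. The first root is the $u'$-type (translation) branch, which is \emph{recessive} and tends to $0$; the dominant branch blows up. The paper shows $|v|\to\infty$: $v$ is eventually monotone (Lemma~\ref{limitv}), and a bounded $v$ would force $Q\to0$ for the ground state, resp.\ $B_0\to0$ in the last phase, contradicting the positivity and monotonicity of these functions from the phase analysis. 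So $v$ lies on the singular branch and satisfies $vv'>0$ near $z_u$.

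This is what the shooting argument actually uses. With $u>0$, $u'<0$, $v<0$, $v'<0$ on $[R,z_u)$, one gets $u(R,\beta)<u(R,\alpha)$ and $u'(R,\beta)<u'(R,\alpha)$ for $\beta>\alpha$. Comparison on the range where $f$ is decreasing then gives a new zero. For $\beta<\alpha$ the reverse inequalities give a critical point with $E<0$, hence no new zero. If $v$ were a multiple of $u'$ to leading order, as you propose, $v$ and $v'$ would have opposite signs near $z_u$. The first-order information would then not decide whether a zero is created (the translation-degenerate case), so the sign of $uv$ alone is not enough. Moreover, $z_u(\alpha)$ is not even continuous at a bound state, because nearby solutions have no double zero at all, so the ``shift of $z_u$'' heuristic is unavailable.

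Minor points: for large $\alpha$ no rescaling of $r$ is needed, since $u/\alpha$ converges on compacts to the solution of $w''+\frac{n-1}{r}w'+w=0$; with your $\lambda=\alpha^{(2-q)/2}$ the limit is $w\equiv1$. Also $Lu=f'(u)u-f(u)$, not $f(u)-f'(u)u$. Finally, differentiability in $\alpha$ across zeros of $u$, where $f'(u)\notin L^\infty$, needs the $L^1$-coefficient theory of Proposition~\ref{lem:singular-linear}.
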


We note that the behaviour of a ground state or bound state beyond its first double zero $z_u$
is also covered by our analysis.
After $z_u$ the solution is no longer uniquely determined by the initial value at $r=0$;
besides the trivial extension $u\equiv0$ on $[z_u,\infty)$,
there exist continuations that vanish on an initial interval $[z_u,s_1]$ and then,
without changing sign, oscillate about $1$ (if the solution stays positive) or about $-1$
(if it stays negative); see Lemma~\ref{lem2.4} for a precise statement.

As a direct corollary of Theorem \ref{thm:main}, we have 
\begin{theorem}
For each positive integer $k$, there exists a unique radial bound state of \eqref{eq:sublinear} in $\mathbb{R}^n$ that possesses precisely $k$ nodes, up to spatial translation and sign reflection.
\end{theorem}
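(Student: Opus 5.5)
The corollary follows from Theorem~\ref{thm:main} once we know that every bound state of \eqref{eq:sublinear} is, after translation, radial and given by the shooting problem \eqref{eqsub}. The statement concerns radial bound states, so the real task is to reduce an arbitrary radial bound state with $k$ nodes to a solution $u(r,\alpha)$ of \eqref{eqsub} and then read off uniqueness from part (ii).

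First, let $U$ be a radial bound state of \eqref{eq:sublinear} with exactly $k$ nodes. Up to a translation it is radial about the origin, so $U(x)=u(|x|)$. Here $u$ is $C^2$ on $[0,\infty)$, satisfies the radial ODE and has $u'(0)=0$. Replacing $U$ by $-U$ if needed, we may assume $\alpha:=u(0)\ge 0$.

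We rule out $\alpha=0$. By the compact support principle and the uniqueness part of Lemma~\ref{lem24}, near $r=0$ we have $u\equiv 0$ up to its first double zero. If the first double zero is at $r=0$, then $u$ could only leave zero later with a continuation that does not change sign (Lemma~\ref{lem2.4}), and so it would not be a nodal bound state. More carefully, a nontrivial solution starting from a double zero at some $s_1\ge0$ is, by the same ODE analysis applied on $[s_1,\infty)$, of one sign and oscillates about $\pm1$. Hence it does not decay, a contradiction. So $\alpha>0$ and $u=u(\cdot,\alpha)$ on $[0,z_u)$.

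Next we apply the classification. Since $u\to 0$ at infinity, parts (iii) and (iv) of Theorem~\ref{thm:main} exclude every $\alpha\notin\{\alpha_j\}$. For such $\alpha$, on $(0,z_u)=(0,\infty)$ the solution either is $\equiv1$ or oscillates about $\pm1$ behind its last zero, and therefore does not tend to $0$. So $\alpha=\alpha_j$ for some $j$.

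Beyond $z_u$, Lemma~\ref{lem2.4} shows that any continuation other than $u\equiv 0$ eventually oscillates about $\pm1$, again contradicting decay. Thus $u=u(\cdot,\alpha_j)$ on $[0,z_u)$ and $u\equiv0$ afterwards. The node count forces $j=k$: $j=0$ gives no sign change, and $u(\cdot,\alpha_j)$ has exactly $j$ simple zeros. Existence is the solution $u(|x|,\alpha_k)$ extended by zero, which is $C^2$ because $u,u',u''$ vanish at $z_u$ by the asymptotics in (ii).

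The only delicate point is the non-uniqueness after double zeros: we must exclude both a nontrivial continuation after $z_u$ and a solution with $u(0)=0$. Both are handled by the statement of Lemma~\ref{lem2.4} that continuations from a double zero do not change sign and oscillate about $\pm1$, hence do not decay. Everything else is direct bookkeeping from Theorem~\ref{thm:main}.
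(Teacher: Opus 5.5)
Your proposal is correct and takes the same route as the paper. The paper states this result as an immediate corollary of Theorem~\ref{thm:main}(ii), observing only that the radial $k$-node bound state is $u(\cdot,\alpha_k)$ up to $u\mapsto -u$. Your argument makes explicit the reductions the paper leaves implicit: you exclude $u(0)=0$ and nontrivial continuations past $z_u$ via Lemma~\ref{lem2.4}, and you exclude $\alpha\notin\{\alpha_j\}$ via parts (iii)--(iv).

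Two points of phrasing. First, Lemma~\ref{lem24} says nothing at a double zero, so the case $u(0)=0$ rests entirely on your ``more carefully'' argument using Lemma~\ref{lem2.4} at $s_0=0$. Second, $u''(z_u^-)=0$ follows from the ODE, since $u'\to0$ and $f(u)\to0$, and not from the asymptotic formula alone.
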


As a by-product of our analysis, we establish the uniqueness of the superlinear model $f(u)=-u+|u|^{p-1}u$ for $n=2$ and $p>1$ via the method of \cite{Tang2025}, recovering a result previously known from \cite{CGY}. Because \cite{Tang2025} relies on the dimension to guarantee the positivity of certain auxiliary functions, we show that for $n=2$, this positivity follows directly from the induction hypothesis. This allows us to extend \cite[Theorems~1 and~2]{Tang2025} to all dimensions $n\ge 2$.

\medskip
 
Our proof follows the general strategy of Tang~\cite{Tang2025}. The central object is the variation $v=\partial_\alpha u$, which satisfies the linearized equation
\[
v''+\frac{n-1}{r}v'+f'(u)v=0,\qquad v(0)=1,\; v'(0)=0.
\]
By locating the zeros of $v$ relative to those of $u$, one shows that the number $\mathcal{N}(\alpha)$ of zeros of $u$ is a non‑decreasing function of $\alpha$, stays constant on intervals of oscillatory solutions, and jumps by at least one when passing through a ground state or a bound state. These properties, together with the existence of solutions having any prescribed number of zeros~\cite{Balabane2003,  Gazzola2000}, force the sequence $\{\alpha_k\}$ to be strictly increasing and unique.

The major technical work consists in proving a ``phase transition lemma'' (Lemma~\ref{lem:phase-transition-sublinear}) which guarantees that in each phase $[c_{i-1},c_i]$ the variation $v$ has exactly one zero $\tau_i$, and that certain energy-type combinations $Q,M,T_2$ remain positive at the right endpoint $c_i$.
For $n\ge 3$, this was established in~\cite{Tang2025} through a series of delicate estimates relying on the Pohozaev identity and the specific algebraic form of $f$. 
The sublinear setting requires two essential modifications:
\begin{itemize}
  \item The compact support forces us to replace the usual exponential decay estimates by an analysis of the behavior of $v$ near the support boundary. In particular, we show that $|v(r)|\to\infty$ as $r\uparrow z_u$ (Lemma~\ref{limitv}), which is crucial for the jump of the zero number at bound states.
  \item The non‑Lipschitz singularity of $f$ at $u=0$ breaks the $C^2$ regularity of $v$ at zeros of $u$. We overcome this difficulty by proving a general uniqueness and continuous dependence result for linear equations with $L^1$ coefficients (Proposition~\ref{lem:singular-linear}) and by carefully checking that all auxiliary functions remain $C^1$ across the simple zeros of $u$.
\end{itemize}
Additionally, we establish a new integral estimate (Lemma~\ref{lem:residual-integral-alln-sublinear})
that holds for all $n\ge 2$ and for a general class of nonlinearities including both the sublinear
and the superlinear cases, thereby generalizing \cite[Lemma~5.4]{Tang2025}.
With this estimate, the only remaining part of the argument that relies on the specific form of the
nonlinearity and on the dimension is the proof of the positivity of $T_2$.
In particular, when $n=2$, the crucial auxiliary function $Q_n$ reduces to $Q_2$, whose positivity
is already guaranteed by the induction hypothesis.
This observation allows us to extend the proof to the planar case without any further restrictions
on the exponent.

The paper is organized as follows.
Section~2 collects basic properties of radial solutions, adapted to the sublinear framework.
Section~3 analyses the first phase and proves that $v$ has exactly one zero before the first zero of $u$.
Section~4 establishes the phase transition lemma, the core of the induction.
Section~5 completes the proof of the main theorem.
Finally, Section~6 indicates the necessary modifications to obtain uniqueness of bound states for the superlinear equation in dimension $n=2$.

\section{Basic properties of radial solutions} 
	Let $u(r)$ be a solution satisfying
\begin{equation}\label{eq2.1}
u'' + \frac{n-1}{r} u' + f(u) = 0, \quad u(0) = \alpha > 0, \quad u'(0) = 0.
\end{equation}
Let $F(u) = \int_0^u f(s) \, ds$.
We first recall the following assumptions on the nonlinearity $f$ that are used in \cite{Tang2025} to study the uniqueness of bound states for superlinear equations.
\begin{enumerate}[label=(C\arabic*), leftmargin=2em, itemsep=0.5ex]
    \item $f \in C^1(\mathbb{R})$, $f(u) < 0$ for $u \in (0,1)$, $f(u) > 0$ for $u \in (1,\infty)$, and $f(-u) = -f(u)$;
    
    \item there is some $\alpha_* > 1$ such that $F(\alpha_*) = \int_0^{\alpha_*} f(s) \, ds = 0$;
    
    \item $\zeta := -f'(0) > 0$;
    
    \item $f'(1) > 0$;
    
    \item there is $\alpha^* > 0$ such that $h(u) := 2nF(u) - (n-2)uf(u) < 0$ for $u \in (0, \alpha^*)$, and $h(u) > 0$ for $u > \alpha^*$;
    
    \item $h_1(u) := uf(u) - 2F(u) \geq 0$;
    
    \item there exists $\tilde{\alpha} > 0$ such that $h_2(u) := (n+2)f(u) - (n-2)uf'(u) < 0$ for $u \in (0, \tilde{\alpha})$, and $h_2(u) > 0$ for $u > \tilde{\alpha}$;
    
    \item $\dfrac{f(u)}{uf'(u)}$ is increasing for $u > 1$;
    
    \item $f''(u) > 0$ for $u > 1$.
\end{enumerate}

For the sublinear nonlinearity $f(u) = u - |u|^{q-2}u$ with $q \in (1,2)$, conditions (C1) and (C3) are not satisfied due to the singularity at 
$u=0$. However, all other structural conditions (C2) and (C4)--(C9) remain valid. Therefore, we replace (C1) and (C3) with the following modified assumptions:
\begin{enumerate} 
    \item[(C1')] 
    \textbf{($C1'_1$)} $f \in C^1(\mathbb{R}\setminus\{0\}) \cap C(\mathbb{R})$; \\
    \textbf{($C1'_2$)} $f(u) < 0$ for $u \in (0,1)$, $f(u) > 0$ for $u \in (1,\infty)$, and $f(-u) = -f(u)$.
    
    \item[(C3')] There exists   $q\in(1,2)$ such that
    \[
    \lim_{u \to 0^+} \frac{f'(u)}{-u^{q-2}} = q-1 > 0.
    \]
\end{enumerate}
As we will show, replacing the classical condition (C3) with (C3') shifts the asymptotic behavior of ground and bound states from exponential decay to compact support. Moreover, the combination of (C1') and (C3') is sufficient to guarantee the unique extension of solutions up to their first double zero.

Note that if $f$ satisfies (C1') and (C3'), then 
by L'Hospital rule 
\[
\lim_{u\to 0^+} \frac{f(u)}{u^{q-1}} = \lim_{u\to 0^+} \frac{f'(u)}{(q-1)u^{q-2}} = -1.
\]
Therefore, $f$ satisfies the compact support principle in \cite{pucciStrongMaximumPrinciple2004}. Hence, any ground state or bound state solution has compact support, and the support radius is a double zero of the solution. To show that 
$u$ can be uniquely extended up to its first double zero, we rely on the fundamental uniqueness result for second-order linear ODEs, whose proof will be given in Appendix.

\begin{proposition} 
\label{lem:singular-linear}
Let $s_0 \ge 0$, $\delta > 0$, $m \ge 0$, and $b, c \in \mathbb{R}$. We impose the following   conditions:
\begin{itemize}
    \item If $s_0 = 0$ and $m > 0$, we assume $b > 0$ and $c = 0$.
    \item If $s_0 > 0$, we assume $\delta < s_0$, ensuring that $[s_0-\delta, s_0+\delta] \subset (0,\infty)$.
\end{itemize}
Define the interval $I=[\max\{0, s_0-\delta\}, s_0+\delta]$.
Consider the initial value problem
\begin{equation}\label{eq:sing}
\bigl(r^{m}u'(r)\bigr)' = r^{m} a(r) u(r), \qquad u(s_0)=b, \quad u'(s_0)=c,
\end{equation}
where the coefficient $a \in L^1(I)$ is continuous on $I \setminus \{s_0\}$. Then the following properties hold:
\begin{enumerate}
    \item[\textup{(i)}] There exists a unique solution $u$ to \eqref{eq:sing} satisfying the regularity $u \in C^1(I) \cap C^2\bigl(I \setminus \{s_0\}\bigr)$.

    \item[\textup{(ii)}] Let $\bar{a} \in L^1(I)$ be continuous on $I \setminus \{s_0\}$, and let $\bar{u}$ be the corresponding solution to \eqref{eq:sing} with coefficient $\bar{a}$ and the same initial data $(b,c)$. If $\|\bar{a} - a\|_{L^1(I)}$ is sufficiently small, then
    \[
    \|\bar{u} - u\|_{C^1(I)} \le C \|\bar{a} - a\|_{L^1(I)},
    \]
    where the constant $C > 0$ depends only on $m, s_0, \delta, b, c$, and $\|a\|_{L^1(I)}$.
\end{enumerate}
\end{proposition}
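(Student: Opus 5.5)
We will reformulate \eqref{eq:sing} as a linear Volterra integral equation and apply a Picard/Gronwall argument in the space $C(I)$.

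\textbf{Integral equation.} Integrate $(r^m u')' = r^m a u$ from $s_0$ to $r$ to get
\[
r^m u'(r) = s_0^m c + \int_{s_0}^r t^m a(t)u(t)\,dt .
\]
Dividing by $r^m$ and integrating once more gives
\[
u(r) = b + \int_{s_0}^r \rho^{-m}\Bigl(s_0^m c + \int_{s_0}^\rho t^m a(t) u(t)\,dt\Bigr)d\rho =: (\mathcal{T}_a u)(r).
\]
We will check that a $C^1(I)\cap C^2(I\setminus\{s_0\})$ solution of \eqref{eq:sing} is equivalent to a continuous fixed point of $\mathcal T_a$. One direction is differentiation, using that $a$ is continuous off $s_0$, so $u''$ exists there. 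The other direction is direct integration; $u\in C^1$ near $s_0$ holds because the inner integral is absolutely continuous, as $a\in L^1$.

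When $s_0>0$, the weights $r^{\pm m}$ are bounded above and below on $I\subset(0,\infty)$. Swapping the order of integration gives a kernel bound
\[
|(\mathcal T_a u-\mathcal T_a w)(r)| \le K\Bigl|\int_{s_0}^r |a(t)|\,|u-w|(t)\,dt\Bigr| .
\]
The usual iteration then gives the $n$-th iterate bound $(K\|a\|_{L^1})^n/n!$, where we use $A(r)=|\int_{s_0}^r|a||$ as the "time" variable. This yields existence and uniqueness of the fixed point, and a Gronwall bound $\|u\|_\infty\le (|b|+C|c|)e^{K\|a\|_{L^1}}$.

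\textbf{The singular case $s_0=0$, $m>0$.} Here $c=0$, so the first term drops. For $t\le\rho$ we have $t^m\rho^{-m}\le1$, hence
\[
\Bigl|\rho^{-m}\int_0^\rho t^m a u\,dt\Bigr|\le \int_0^\rho |a||u|\,dt .
\]
The same Volterra structure $|(\mathcal T_a u - \mathcal T_a w)(r)|\le \delta\int_0^r|a||u-w|$ therefore holds with $K=\delta$. For $u\in C^1$ at $0$ we need $u'(r)=r^{-m}\int_0^r t^m a u\to 0$. This follows because $|r^{-m}\int_0^r t^m a u|\le\|u\|_\infty\int_0^r|a|\to0$, so $u'(0)=0=c$ and $u'$ is continuous at $0$. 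The case $s_0=0$, $m=0$ is trivial, since the weights are absent. I expect this weighted estimate at $r=0$, including the verification of $C^1$ regularity at $s_0$ in both cases, to be the only delicate point.

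\textbf{Continuous dependence (ii).} Set $w=\bar u-u$. Then
\[
w=\mathcal T_{\bar a}\bar u-\mathcal T_a u = \bigl(\mathcal T_{\bar a}\bar u-\mathcal T_{\bar a}u\bigr)+\bigl(\mathcal T_{\bar a}u-\mathcal T_a u\bigr).
\]
The second term is bounded by $K\|u\|_\infty\|\bar a-a\|_{L^1}$. The first term is bounded by $K\int|\bar a||w|$, with $\|\bar a\|_{L^1}\le\|a\|_{L^1}+1$ once $\|\bar a-a\|_{L^1}\le 1$, which is what "sufficiently small" is used for. Gronwall then gives $\|w\|_\infty\le C\|\bar a-a\|_{L^1}$.

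For the derivative, use
\[
w'(r)=r^{-m}\int_{s_0}^r t^m\bigl(\bar a w+(\bar a-a)u\bigr)dt ,
\]
which is bounded by the same quantity via the weight bounds above. This gives the $C^1$ estimate, with $C$ depending only on $m,s_0,\delta,b,c,\|a\|_{L^1}$ through $K$ and the a priori bound on $\|u\|_\infty$.
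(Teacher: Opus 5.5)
Your proposal is correct and follows essentially the same route as the paper. The shared steps are a Volterra reformulation with a uniformly bounded kernel (your bound $t^m\rho^{-m}\le 1$ is exactly what handles $s_0=0$, $m>0$), existence and uniqueness of a continuous fixed point, regularity read off from the formula for $r^m u'$, and Gronwall estimates first for $w=\bar u-u$ and then for $w'$. The differences are cosmetic: you use the Picard factorial bound in the variable $\int|a|$ where the paper splits $I$ into subintervals of small $L^1$ mass. In (ii) you write the error as $\bar a w+(\bar a-a)u$ (Gronwall with $\bar a$, a priori bound on $u$) instead of the paper's $a w+(\bar a-a)\bar u$ (Gronwall with $a$, a priori bound on $\bar u$); both versions use $\|\bar a\|_{L^1}\le\|a\|_{L^1}+1$.
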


\begin{lemma}\label{lem24}
	Assume (C1') and (C3'). Let $u$ be a solution of \eqref{eq2.1}.
	If $s_0\geq 0$ is such that $u(s_0)\neq 0$ or $u'(s_0)\neq 0$, then there is $\delta>0$ such that 
	$u(s)$ can uniquely extend to the interval $[\max\{0,s_0-\delta\}, s_0+\delta]$ as a classical $C^2$ solution.
\end{lemma}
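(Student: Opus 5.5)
Two cases. If $u(s_0)\neq 0$, then $f$ is $C^1$ near $u(s_0)$ by (C1'), and standard Picard--Lindelöf theory, together with the usual treatment of the singular term $\frac{n-1}{r}u'$ at $s_0=0$ via the integral form $u'(r)=-r^{1-n}\int_0^r s^{n-1}f(u(s))\,ds$, gives local existence and uniqueness of a $C^2$ solution. The interesting case is $s_0>0$, $u(s_0)=0$ and $u'(s_0)\neq 0$. Here $f$ is not Lipschitz at $0$, so uniqueness has to come from the fact that $u$ crosses zero transversally. The plan is to reduce uniqueness to Proposition~\ref{lem:singular-linear}, applied with $m=n-1$, to the difference of two solutions.

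For existence, I would use Peano's theorem, since $f$ is continuous. This gives a $C^2$ solution on some interval $[s_0-\delta,s_0+\delta]$ with $\delta<s_0$, where $r\ge s_0/2$ and the equation $(r^{n-1}u')'=-r^{n-1}f(u)$ is regular. For uniqueness, let $u_1,u_2$ be two $C^2$ solutions with the same data at $s_0$. Shrinking $\delta$, we have $|u_i'|\ge |c|/2>0$, so $|u_i(r)|\ge \frac{|c|}{2}|r-s_0|$ and $u_i(r)\neq 0$ for $r\neq s_0$. Set $w=u_1-u_2$ and
\[
a(r)=-\frac{f(u_1(r))-f(u_2(r))}{u_1(r)-u_2(r)} \quad(\text{and } a(r)=-f'(u_1(r)) \text{ when } u_1(r)=u_2(r)).
\]
Then $(r^{n-1}w')'=r^{n-1}a w$ with $w(s_0)=w'(s_0)=0$. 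Since $u_1,u_2$ have the same sign on each side of $s_0$, the mean value theorem gives $a(r)=-f'(\xi(r))$, where $\xi(r)$ lies between $u_1(r)$ and $u_2(r)$. In particular $|\xi(r)|\ge \frac{|c|}{2}|r-s_0|$.

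The main obstacle is showing $a\in L^1$ and continuous away from $s_0$. By (C3'), $|f'(\xi)|\le C|\xi|^{q-2}$ for small $\xi\ne0$, so $|a(r)|\le C'|r-s_0|^{q-2}$. This is integrable because $q-2>-1$. Continuity on $I\setminus\{s_0\}$ follows from the continuity of $f'$ away from $0$; the difference quotient is continuous, including on the set where $u_1=u_2$. Proposition~\ref{lem:singular-linear}(i) then says $w\equiv 0$ is the unique solution with zero data, so $u_1=u_2$. Finally, $C^2$ regularity at $s_0$ follows directly from the equation, since $f(u)$ is continuous.

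To obtain the two-sided interval in the transversal case, I would apply the argument separately on $[s_0-\delta,s_0]$ and on $[s_0,s_0+\delta]$, or directly on the whole interval as above.
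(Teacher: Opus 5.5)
Your proposal is correct and follows the paper's proof essentially step for step. It uses the same case split, the same averaged coefficient (your mean-value form is equivalent to the paper's $A(r)=-\int_0^1 f'\bigl(\bar u+\theta(u-\bar u)\bigr)\,d\theta$), the same bound $|A(r)|\le C|r-s_0|^{q-2}$ from (C3') together with the oddness of $f$ for negative arguments, and the same appeal to Proposition~\ref{lem:singular-linear} with zero data; your explicit use of Peano's theorem for existence is a small addition that the paper leaves implicit.
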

\begin{proof} 
	
	If $u(s_0)\neq 0$, then $f$ is locally Lipschitz in a neighborhood of $u(s_0)$. Hence the uniqueness of the local extension follows from the standard ODE theory. When $s_0=0$, the same conclusion follows from the usual integral formulation of the radial equation together with $u'(0)=0$.
	
	If $u(s_0) = 0$ but $u'(s_0) \neq 0$, then necessarily $s_0>0$. Let
	$
	d:=u'(s_0)\neq0 .
	$
	Assume that \(u\) and \(\bar u\) are two extensions with the same data
	\[
	u(s_0)=\bar u(s_0)=0,\qquad u'(s_0)=\bar u'(s_0)=d .
	\]
	Choosing \(\delta>0\) sufficiently small, with \(\delta<s_0\), both \(u\) and \(\bar u\) have the same sign as
	\(d(r-s_0)\) on \(0<|r-s_0|<\delta\). Moreover, there exists \(c>0\) such that
	\[
	|u(r)|\ge c|r-s_0|,\qquad |\bar u(r)|\ge c|r-s_0|
	\quad\text{for }0<|r-s_0|<\delta .
	\]
	Let \(w=u-\bar u\). Then
	\[
	(r^{n-1}w')'
	=
	-r^{n-1}\bigl(f(u)-f(\bar u)\bigr)
	=
	r^{n-1}A(r)w,
	\]
	where
	\[
	A(r):=-\int_0^1 f'\bigl(\bar u(r)+\theta(u(r)-\bar u(r))\bigr)\,d\theta .
	\]
	By (C3') and the oddness of \(f\), after possibly decreasing \(\delta\), we have
	\[
	|A(r)|\le C|r-s_0|^{q-2}
	\quad\text{for }0<|r-s_0|<\delta .
	\]
	Since \(q>1\), it follows that \(A\in L^1_{\mathrm{loc}}\bigl((s_0-\delta,s_0+\delta)\bigr)\). Applying Proposition~\ref{lem:singular-linear} to \(w\), with zero initial data
	$
	w(s_0)=w'(s_0)=0,
	$
	gives \(w\equiv0\) on \([s_0-\delta,s_0+\delta]\). Therefore the extension is unique.
	
	Finally, since \(f(u(r))\) is continuous and \(s_0>0\), the equation
	\[
	u''+\frac{n-1}{r}u'+f(u)=0
	\]
	implies that the extension is a classical \(C^2\) solution on
	\([\max\{0,s_0-\delta\},s_0+\delta]\).
\end{proof}

\begin{definition}
	Under assumptions (C1') and (C3'), for any solution \(u\), we define
	\(z_u\in \mathbb R^+\cup\{+\infty\}\) as its first double zero, with the
	convention that \(z_u=+\infty\) if no such zero exists.
\end{definition}
By Lemma~\ref{lem24}, the solution is uniquely determined on \((0,z_u)\)
by its initial value. Moreover, every zero of \(u\) in \((0,z_u)\) is simple:
\begin{equation}\label{eqzero}
u'(z)\neq0\quad\text{whenever }u(z)=0,\ z\in(0,z_u).
\end{equation}

We define the energy function
\begin{equation*}
	E(r):=\frac{u'(r)^2}{2}+F(u(r)),
\end{equation*}
with the derivative
	\begin{equation}\label{eprime}
	E'(r) = u' u'' + f(u) u' = u' (u'' + f(u))  = -\frac{n-1}{r} u'^2 \leq 0.
	\end{equation}

Then we have the following results.
\begin{proposition}\label{pro2.5}
	Assume (C1') (C2) and (C3').
Let $u(r)$ be a solution of \eqref{eq2.1} with $\alpha\neq 1$. The following statements hold.
	\begin{enumerate}
		\item[(i)]  $E(r)$ is strictly decreasing in $(0,z_u)$.
		\item[(ii)] If \(0<\bar r<\hat r<z_u\) and \(|u(\hat r)|=|u(\bar r)|\), then $|u'(\hat{r})| < |u'(\bar{r})|$.
		\item[(iii)] For $r\in (0, z_u)$, we have $|u(r)| <\alpha$  if $\alpha > 1$
		\item[(iv)]  If $0 < \alpha \leq  \alpha_*$, then $z_u = +\infty$ and  $\min\{\alpha, \alpha'\} < u(r) < \max\{\alpha, \alpha'\}$ for all $r > 0$, where $\alpha'\neq \alpha$ is the unique nonnegative point such that $F(\alpha') = F(\alpha)$.
	\end{enumerate}
\end{proposition}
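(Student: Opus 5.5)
We will prove the four items in order, working throughout on $(0,z_u)$. There the solution is $C^2$ by Lemma~\ref{lem24}, and the energy identity \eqref{eprime} holds.

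\emph{Item (i).} By \eqref{eprime}, $E$ is nonincreasing. Suppose $E$ were constant on some subinterval $(a,b)\subset(0,z_u)$. Then $u'\equiv0$ there, so $u$ is constant there. The equation then forces $f(u)\equiv0$, so $u\in\{0,\pm1\}$ on $(a,b)$.
\begin{itemize}
\item If $u\equiv\pm1$, then since $\pm1$ is a point where $f$ is locally Lipschitz, standard backward uniqueness (valid since $a>0$, and at $r=0$ via the integral formulation) gives $u\equiv\pm1$ on $[0,b)$. Hence $\alpha=1$ (the value $-1$ is excluded since $\alpha>0$), contradicting $\alpha\neq1$.
\item If $u\equiv0$, then every point of $(a,b)$ is a double zero, contradicting the definition of $z_u$.
\end{itemize}
Since $u'$ vanishes only on a set with no interior, $E'<0$ a.e. Therefore $E$ is strictly decreasing.

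\emph{Item (ii).} Compare $E(\hat r)<E(\bar r)$ using (i). Since $F(u(\hat r))=F(u(\bar r))$ by oddness of $f$ (so $F$ is even), this gives $u'(\hat r)^2<u'(\bar r)^2$.

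\emph{Item (iii).} Let $\alpha>1$. Then $E(r)<E(0)=F(\alpha)$ for $r\in(0,z_u)$, so $F(u(r))<F(\alpha)$. By (C1'), $F$ is decreasing on $(0,1)$ and increasing on $(1,\infty)$, and $F$ is even. We split into two cases.
\begin{itemize}
\item If $\alpha\le\alpha_*$, then $F(\alpha)\le0$. The sublevel set $\{F<F(\alpha)\}$ is contained in $(\alpha',\alpha)\cup(-\alpha,-\alpha')$ with $\alpha'\in[0,1)$.
\item If $\alpha>\alpha_*$, then $F(\alpha)>0$, and $\{F<F(\alpha)\}=(-\alpha,\alpha)$ directly.
\end{itemize}
In both cases $|u|<\alpha$. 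There is one subtle point: we must rule out $|u(r)|$ first reaching the value $\alpha$, which is immediate from the strict inequality $F(u(r))<F(\alpha)$ together with the monotonicity of $F$ on $(1,\infty)$.

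\emph{Item (iv).} Let $0<\alpha\le\alpha_*$. Then $F(\alpha)\le 0$, and $\alpha'$ is defined as the other nonnegative root of $F=F(\alpha)$. By (C1'),(C2), $\alpha'=0$ only when $\alpha=\alpha_*$, and $\alpha'=\alpha$ is excluded since $\alpha\ne1$.

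For $r\in(0,z_u)$ we have $F(u(r))<F(\alpha)\le 0$. The set $\{F<F(\alpha)\}$ consists of two components symmetric about $0$, namely $(\min\{\alpha,\alpha'\},\max\{\alpha,\alpha'\})$ and its negative. Both exclude $0$. Since $u(0)=\alpha>0$ and $u$ is continuous, $u$ stays in the positive component. In particular $u$ never vanishes on $(0,z_u)$.

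It remains to show $z_u=+\infty$.
\begin{itemize}
\item If $z_u<\infty$, then $u(z_u)=u'(z_u)=0$, so $E(z_u)=0$.
\item On the other hand, $E$ is strictly decreasing on $(0,z_u)$ with $E(0)\le0$, and $E$ is continuous up to $z_u$. Hence $E(z_u)<0$, a contradiction.
\end{itemize}
This gives $z_u=+\infty$ and the stated bounds for all $r>0$.

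The main obstacle is the borderline case $\alpha=\alpha_*$, where $E(0)=0$ and $\alpha'=0$. There we must check carefully that strict decrease yields $E(r)<0$ for every $r>0$. This holds because strictness from (i) holds immediately after $0$: $u'\not\equiv0$ near $0$, since $f(\alpha_*)\ne0$ gives $u''(0)=-f(\alpha)/n\neq0$.
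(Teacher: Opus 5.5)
Your proof is correct and follows essentially the paper's route: strict decrease of $E$ from \eqref{eprime} by ruling out intervals of constancy (via $u\in\{0,\pm1\}$, uniqueness, and the definition of $z_u$), then $F(u(r))\le E(r)<E(0)=F(\alpha)$ for (ii)--(iv), with $E(z_u)=0$ forcing $z_u=+\infty$. One small slip: ``vanishes only on a set with no interior'' does not imply ``$E'<0$ a.e.'', but that sentence is unnecessary, because a nonincreasing function that is constant on no subinterval is already strictly decreasing.
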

\begin{proof}
(i)	For \eqref{eprime}, $E'(r) \leq 0$ for $r \in (0, z_u)$.
	If $E(r)$ were not strictly decreasing, then there would exist $r_1 < r_2$ such that $E(r)$ is constant on $[r_1, r_2]\subset(0,z_u)$, so $E'(r) = 0$ on $[r_1, r_2]$, hence $u'(r) = 0$ on $[r_1, r_2]$. By \eqref{eqzero}, $u(r)=c\neq 0$ on $[r_1,r_2]$. Substituting into the equation gives $f(c) = 0$, and hence $c = 1$.
	This, together with the uniqueness of ODE, also implies that
	$u(r) \equiv c=1$ on $[0,+\infty)$.
	 Thus we get a contradiction, and hence $E(r)$ is strictly decreasing.
	
	(ii) Let $z_u\geq \hat{r} > \bar{r} > 0$ and $|u(\hat{r})| = |u(\bar{r})|$. We prove $|u'(\hat{r})| < |u'(\bar{r})|$.
	Since $E(r)$ is strictly decreasing, $E(\hat{r}) < E(\bar{r})$. Also, $F(u)$ is even, so $F(u(\hat{r})) = F(u(\bar{r}))$. Then 
	\[
	\frac{1}{2} u'(\hat{r})^2 = E(\hat{r}) - F(u(\hat{r})) < E(\bar{r}) - F(u(\bar{r})) = \frac{1}{2} u'(\bar{r})^2,
	\]
	so $|u'(\hat{r})| < |u'(\bar{r})|$.
	
(iii) Assume that $\alpha > 1$.  For $r \in(0,z_u)$, since $E(0) = F(\alpha)$ and  $E(r) < E(0)$, we have $F(u(r)) \leq E(r) < E(0)=F(\alpha)$. Note that $F(u)$ is increasing on $[1, \infty)$ (because $f(u) > 0$ for $u > 1$). If for some $r\in(0,z_u)$,  $|u(r) |\geq \alpha$, then $F(u(r)) =F(|u(r)|)\geq F(\alpha)$, a contradiction. Hence $|u(r)|  < \alpha$ for $r\in(0,z_u)$.
	
(iv) By (i) and $E(0)\leq 0$, we have 
\(F(u(r))\leq E(r)<0
\) for each  $r\in (0, z_u)$. 
 
Since \(F(0)=0\), the inequality \(F(u(r))<0\) excludes zeros of \(u\) for
\(r>0\). As \(u(0)=\alpha>0\), we have \(u(r)>0\) for all \(r>0\).
 If \(z_u<+\infty\), then \(E(z_u)=0\), contradicting \(E(r)<0\) for \(r>0\) and
the monotonicity of \(E\). Hence \(z_u=+\infty\).
\end{proof}

\begin{proposition}\label{pro2.6} Assume (C1'), (C2) and (C3').
	Let  $u$ be the solution of \eqref{eq2.1}.
	Then the following hold:
	\begin{enumerate}
		\item[(i)] If $u$ is a ground state or a nodal solution, then $\alpha > \alpha_*$.
		
		\item[(ii)] If $u$ is a ground state, then $u'(r) < 0$ for all $r \in (0, z_u)$.
		
		\item[(iii)] If $u$ is a nodal solution with $k \geq 1$ zeros before $z_u$: $0<z_1 < z_2 < \cdots < z_k<z_u$, then $u$ has $k$ critical points in $[0, z_k]$, labeled as $0 = c_0 < c_1 < c_2 < \cdots < c_{k-1}$ with $c_i \in (z_i, z_{i+1})$ for $1 \leq i \leq k-1$, and
		\[
		u(0) > |u(c_1)| > |u(c_2)| > \cdots > |u(c_{k-1})| > \alpha_*,
		\]
		Moreover, if $u$ is a bound state with $k$ zeros before $z_u$, then there is a unique critical point $c_k \in (z_k,z_u)$ with $|u(c_{k-1})| > |u(c_k)| > \alpha_*$.
		\item[(iv)]If  $u$ is a ground state or a bound state, then  \[
		\frac{u'(r) }{|u(r)|^{\frac{q}{2}-1}u(r)} \to -\sqrt{\frac{2}{q}},\quad
		|u(r)| (z_u-r)^{-\frac{2}{2-q}}\to C_{q}:=\left( \frac{(2-q)^2}{2q} \right)^{\frac{1}{2-q}} \text{ as } r\uparrow z_u.\]  
	\end{enumerate}
\end{proposition}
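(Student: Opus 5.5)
The four parts rest on energy monotonicity (Proposition~\ref{pro2.5}), the simplicity of zeros before $z_u$ (\eqref{eqzero}), and, for the boundary asymptotics, the first-order relation $u'^2\approx 2F(u)$ near $z_u$. I treat them in order.

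\textbf{Part (i).} If $\alpha\le\alpha_*$, Proposition~\ref{pro2.5}(iv) gives $z_u=+\infty$ and $u>0$. Moreover $u$ stays between $\min\{\alpha,\alpha'\}$ and $\max\{\alpha,\alpha'\}$, and both are positive, except in the borderline case $\alpha=\alpha_*$, where $\alpha'=0$. So $u$ neither tends to $0$ nor changes sign. In the case $\alpha=\alpha_*$, $E(0)=0$ and $E$ is strictly decreasing, so $F(u)<0$ for $r>0$. Hence $u$ never vanishes and $z_u=\infty$. A ground state, however, must reach $E=0$ at its support radius: at a double zero $E(z_u)=0$, while $E(r)<E(0)=0$ for $r>0$, which is impossible. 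Thus a ground state or nodal solution forces $\alpha>\alpha_*$.

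\textbf{Part (ii).} Suppose $u'(r_0)=0$ at some first $r_0\in(0,z_u)$. Since $u>0$ there, $r_0$ is a local minimum or the equation gives $u''(r_0)=-f(u(r_0))$.
\begin{itemize}
\item If $u(r_0)>1$, then $u''(r_0)<0$, so $r_0$ is a local maximum, which is inconsistent with $u$ decreasing on $(0,r_0)$ (note $u''(0)=-f(\alpha)/n<0$).
\item If $u(r_0)<1$, then $F(u(r_0))<0$, so $E(r)<E(r_0)<0$ for all $r>r_0$. But $E(z_u)=0$, contradicting monotonicity.
\item If $u(r_0)=1$, uniqueness of solutions forces $u\equiv1$.
\end{itemize}
Hence $u'<0$ on $(0,z_u)$.

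\textbf{Part (iii).} Between consecutive zeros $z_i,z_{i+1}$, the function $|u|$ has a maximum at some $c_i$. I need this maximum to be unique. At any critical point $c$, $E(c)=F(u(c))$, and since $E(c)>E(z_{i+1})=u'(z_{i+1})^2/2>0$, we get $F(u(c))>0$, that is, $|u(c)|>\alpha_*>1$. Any critical point in $(z_i,z_{i+1})$ is therefore a strict local maximum of $|u|$, because $f(u)$ has the sign of $u$ and $u''=-f(u)$ there. Two strict maxima would require a minimum in between, so there is exactly one. Before $z_1$ the same argument shows $u'<0$ on $(0,z_1)$, so $c_0=0$ is the only critical point in $[0,z_1)$. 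The strict decrease $|u(c_i)|>|u(c_{i+1})|$ follows from $F(u(c_i))=E(c_i)>E(c_{i+1})=F(u(c_{i+1}))$ together with the monotonicity of $F$ on $[\alpha_*,\infty)$.

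For a bound state, on $(z_k,z_u)$ the function $u$ starts with $u'(z_k)\ne0$ and ends at $u=u'=0$. So $|u|$ has an interior maximum $c_k$. Uniqueness of $c_k$ follows as in part (ii), using $E(r)>E(z_u)=0$ for $r<z_u$. This gives $|u(c_k)|>\alpha_*$ and $|u(c_k)|<|u(c_{k-1})|$.

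\textbf{Part (iv).} This is the main obstacle. By sign symmetry, assume $u>0$ and $u'<0$ on $(c,z_u)$. First I show $u'^2/(2F(u))\to1$. Using $E=u'^2/2+F(u)$, $E(z_u)=0$, and $E'=-\tfrac{n-1}{r}u'^2$,
\[
E(r)=\int_r^{z_u}\tfrac{n-1}{s}u'^2\,ds\le C(z_u-r)\sup_{[r,z_u]}u'^2 .
\]
Because $u$ is monotone near $z_u$, $u'^2$ on $[r,z_u]$ should be comparable to $u'(r)^2$. This comparison is the delicate point, since $u''$ may not have a sign. I would handle it by noting that $|u''|\le C(u^{q-1}+|u'|)$ and working in the variable $u$ rather than $r$. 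Once the comparison holds, $E=o(u'^2)$, and hence $u'^2/2=-F(u)+o(u'^2)$. Since $F(u)\sim -u^q/q$ as $u\to0$ by (C3'), this gives
\[
u'\sim -\sqrt{2/q}\,u^{q/2},
\]
which is the first limit.

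For the second limit, I integrate $\frac{d}{dr}u^{1-q/2}=(1-\tfrac q2)u^{-q/2}u'\to-(1-\tfrac q2)\sqrt{2/q}$. Since $u(z_u)=0$, this yields $u^{(2-q)/2}\sim\frac{2-q}{2}\sqrt{2/q}\,(z_u-r)$. Raising to the power $2/(2-q)$ gives the constant
\[
C_q=\left(\frac{(2-q)^2}{2q}\right)^{1/(2-q)} .
\]
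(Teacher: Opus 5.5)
Parts (i)--(iii) are fine and follow the paper's energy arguments. In (ii) you split into the cases $u(r_0)\gtrless 1$, whereas the paper uses $F(u(\tilde c))=E(\tilde c)>E(z_u)=0$ directly to force $u(\tilde c)>\alpha_*$; this is the same idea.

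The gap is in (iv), at exactly the step you flag as the main obstacle. You never establish $\sup_{[r,z_u)}u'^2\lesssim u'(r)^2$, and your reason for calling it delicate is wrong. Near $z_u$ (with $u>0$, $u'<0$) you have $0<u<1$, so $f(u)<0$, and the equation gives
\[
\bigl(r^{n-1}u'\bigr)'=-r^{n-1}f(u)>0,\qquad\text{equivalently}\qquad u''=-\tfrac{n-1}{r}u'-f(u)>0 .
\]
So $u''$ does have a sign there: $u$ is convex and $r^{n-1}|u'|$ is decreasing. Hence $|u'(s)|\le (r/s)^{n-1}|u'(r)|\le |u'(r)|$ for $s\in[r,z_u)$. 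This is the paper's argument, and it gives immediately
\[
E(r)=\int_r^{z_u}\frac{n-1}{s}\,u'(s)^2\,ds\le \frac{n-1}{r}\,(z_u-r)\,u'(r)^2=o\bigl(u'(r)^2\bigr).
\]
From there your derivation of both limits and of the constant $C_q$ is correct.

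Your proposed substitute does not close by itself. Bounding $|u''|\le C(u^{q-1}+|u'|)$ and integrating $d(u'^2)/du=2u''$ in the variable $u$ only gives $\sup_{[r,z_u)}u'^2\le 2u'(r)^2+Cu(r)^q$. To absorb the $u(r)^q$ term you need one more input: $E(r)\ge 0$, which holds because $E$ is decreasing and $E(z_u)=0$. That gives $u'(r)^2\ge -2F(u(r))\ge \tfrac1q u(r)^q$ for small $u$, and you never invoke it. As written, part (iv) is incomplete; the convexity observation above fills the gap in one line.
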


\begin{proof}
  (i) follows directly from (iv) of Proposition \ref{pro2.5}.
	
	Now we consider (ii). Let $u$ be a ground state. Then $u(r)>0$ in $(0,z_u)$, $u'(0)=0$ and $u''(0) = -f(\alpha)/n$. Since $\alpha > \alpha_* > 1$, we have $f(\alpha) = \alpha - \alpha^{q-1} > 0$. So $u''(0) < 0$, hence $u'$ becomes negative immediately. Suppose by contradiction that there exists a first point $\tilde{c} \in(0,z_u)$ such that $u'(\tilde{c}) = 0$. 
	Then $u'<0$ on $(0, \tilde{c})$. At $\tilde{c}$, we have $u''(\tilde{c}) \geq 0$. 
	Since $F(u(\tilde{c}))=E(\tilde{c}) > E(z_u) = 0$, we know $F(u(\tilde{c}))>0$, and hence $u(\tilde c)>\alpha_*$. Therefore, $u''(\tilde c)=-f(u(\tilde c))<0$, which is a contradiction.
	
	For (iii), let $u$ be a nodal solution with $k$ zeros $0<z_1 < z_2 < \cdots < z_k<z_u$. The zeros are simple by \eqref{eqzero}. By the same proof as (ii), $u' < 0$ on $(0, z_1)$.
	On $(z_1, z_2)$, there must be at least one critical point of $u$. Then for any
	$c_1\in(z_1,z_2)$ such that $u'(c_1)=0$, we have $F(u(c_1))=E(c_1)>E(z_2)>0=F(\alpha_*)$. Hence $|u(c_1)|>\alpha_*$, and $u''(c_1)=-f(u(c_1))>0$. This implies that $c_1$ is the only critical point of $u$ in $(z_1,z_2)$. Noting that $F(u(c_1))=E(c_1)<E(0)=F(\alpha)$, by $|u(c_1)|>\alpha_*$ and  (i): $\alpha>\alpha_*$, we deduce $|u(c_1)|<\alpha=u(0)$.	
	By induction, we obtain the sequence of critical points $c_i$ with $|u(c_{i-1})| > |u(c_i)| > \alpha_*$.
	
	If $u$ is a bound state, by $u(z_k)=u(z_u)=0$, there must be at least one critical point of $u$. Then for any
	$c_k\in(z_k,z_u)$ such that $u'(c_k)=0$, we have $F(u(c_k))=E(c_k)>E(z_u)=0=F(\alpha_*)$. Hence $|u(c_k)|>\alpha_*$, and $u''(c_k)=-f(u(c_k))<0$. This implies that $c_k$ is the only critical point of $u$ in $(z_k,z_u)$. Noting that $F(u(c_k))=E(c_k)<E(c_{k-1})=F(u(c_{k-1}))$, by $|u(c_{k})|>\alpha_*$ and $|u(c_{k-1})|>\alpha_*$, we deduce $|u(c_k)|<|u(c_{k-1})|$.	

(iv) Without loss of generality, we may assume that  
$u(r)>0$  when $r<z_u$ is close to $z_u$.
Then 
by $(r^{n-1}u')'=-r^{n-1}f(u(r))>0$, we know that 
$r^{n-1}u'<0$ is strictly increasing in $r$.  
Integrating \eqref{eprime} from $r$ to $z_u$ gives
\[
 E(r)=\int_r^{z_u} \frac{n-1}{s} (u'(s))^2 ds\leq r^{2n-2}|u'(r)|^2\int_r^{z_u} \frac{n-1}{s^{2n-1}}   ds=o(|u'(r)|^2) .
\]
 Since $z_u$ is a double zero, we have $E(z_u)=0$.  Hence, $E(r)\geq 0$ for $r\leq z_u$.
 Therefore, $E(r) = o(|u'(r)|^2)$ as $r \uparrow z_u$. On the other hand,
  condition (C3') implies that $F(u) =-\frac{1}{q} u^q +o(u^q)$ as $r \uparrow z_u$. Hence,
\[
  (\frac12 +o(1)) |u'(r)|^2 =   (\frac{1}{q} +o(1)) u(r)^q   \text{ as } r \uparrow z_u.
\]
Since $u(r)>0$, $u'(r)<0$ for $r$ close to $z_u$,
we may write the equality as
$
\frac{2}{2-q}\bigl(u^{1-\frac q2}\bigr)'
=
-\sqrt{\frac2q}+o(1),
$
and integrating from \(r\) to \(z_u\) yields the desired asymptotic formula.
\end{proof}

\begin{proposition}\label{pro2.7}
	 Suppose (C1'), (C2), and (C4) hold and $u(r) \not\equiv 1$ is the solution of \eqref{eq2.1}. If $E(\bar{r}) \le 0$ and $u(\bar{r}) > 0$ at some $\bar{r} \ge 0$, then $u(r) \in (0, \alpha_*)$ and it oscillates about $u \equiv 1$ in $(\bar{r}, \infty)$: There are a sequence of critical points of $u$, $\widetilde{c}_1 < \widetilde{c}_2 < \cdots$, along which $\alpha_* > u(\widetilde{c}_1) > u(\widetilde{c}_3) > \cdots > 1$, and $0 < u(\widetilde{c}_2) < u(\widetilde{c}_4) < \cdots < 1$.
If $E(\bar{r}) \le 0$ and $u(\bar{r}) < 0$ at some $\bar{r} \ge 0$, then the same can be said for $-u$.
\end{proposition}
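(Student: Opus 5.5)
The plan is to use the energy $E$ from \eqref{eprime} to trap $u$ in a potential well, and then a Sturm-type comparison to produce the oscillation. We treat the case $u(\bar r)>0$; the case $u(\bar r)<0$ follows by applying it to $-u$, since $f$ is odd by (C1').

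\textbf{Step 1: confinement.} We first show $E(r)<0$ for all $r>\bar r$. If $E(\bar r)<0$ this follows from monotonicity of $E$. If $E(\bar r)=0$, then strict decrease in the sense of Proposition~\ref{pro2.5}(i) gives $E(r)<E(\bar r)=0$ for $r>\bar r$. That argument only needs that $u'$ does not vanish on an interval, and this holds because $u\not\equiv1$ and constant solutions must equal a zero of $f$. We must be careful here, since a priori $u$ could be zero. However, the argument of Proposition~\ref{pro2.5}(iv) applies: as long as $u>0$ we have $F(u(r))\le E(r)<0$, hence $u(r)\in(0,\alpha_*)$, because $F<0$ exactly on $(0,\alpha_*)$ by (C1') and (C2). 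Since $F(u)<0$ forbids $u=0$, continuity gives $u(r)\in(0,\alpha_*)$ for all $r>\bar r$. In particular $z_u=\infty$ beyond $\bar r$ and the solution is classical there.

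\textbf{Step 2: oscillation.} Suppose $u$ were eventually monotone. Then $u\to L\in[0,\alpha_*]$ and $E\to F(L)+\lim u'^2/2$. A standard argument shows $u'\to0$ and $f(L)=0$. The limit cannot be $L=0$, because $E(r)<E(\bar r+1)<0=F(0)$ for $r>\bar r+1$. Hence $L=1$.

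To exclude monotone convergence to $1$, we linearize. Set $w=u-1$, which satisfies $(r^{n-1}w')'+r^{n-1}\,k(r)\,w=0$ with $k(r)=f(u)/(u-1)\to f'(1)>0$ by (C4). Comparison with the Bessel-type equation $w''+\frac{n-1}{r}w'+\tfrac12 f'(1)w=0$, whose solutions oscillate, gives infinitely many zeros of $w$ via the Sturm comparison theorem. Therefore $u$ crosses $1$ infinitely often.

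Between consecutive crossings of $1$ there is a critical point $\widetilde c_j$. At such a point $u''=-f(u)$, so the critical point is a strict maximum when $u>1$ and a strict minimum when $u<1$. This also shows there is exactly one critical point per excursion, because two critical points in the same excursion would need opposite concavity there. The critical values therefore alternate between $(1,\alpha_*)$ and $(0,1)$.

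\textbf{Step 3: monotonicity of the extrema.} At each critical point $F(u(\widetilde c_j))=E(\widetilde c_j)$, and $E$ is strictly decreasing. Since $F$ is increasing on $(1,\alpha_*)$, the maxima satisfy $u(\widetilde c_1)>u(\widetilde c_3)>\cdots>1$. Since $F$ is decreasing on $(0,1)$, the minima satisfy $u(\widetilde c_2)<u(\widetilde c_4)<\cdots<1$. We index so that $\widetilde c_1$ is the first maximum after $\bar r$. If the first critical point after $\bar r$ is a minimum, we relabel, or simply note that the statement concerns the pattern; this is a minor bookkeeping point.

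The main obstacle is Step 2, ruling out monotone convergence to $1$. The Sturm comparison there needs $k(r)$ bounded below by a positive constant for large $r$, which follows from $u\to1$ and $f\in C^1$ near $1$. The case $E(\bar r)=0$ at $\bar r=0$ also needs care, since then $u(0)=\alpha_*$; the strict decrease of $E$ for $r>0$ handles it.
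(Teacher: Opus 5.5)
Your proof is correct. Step 1 is essentially the paper's own argument. The paper phrases it slightly more cleanly: if $E(r_1)=0$ for some $r_1>\bar r$, then $E\equiv 0$ on $[\bar r,r_1]$, so $u\equiv u(\bar r)>0$ there, which forces $u\equiv 1$. This avoids your worry that the constant might be $0$. The paper then uses that $F<0$ exactly on $0<|u|<\alpha_*$, together with continuity, to get $u\in(0,\alpha_*)$ on $(\bar r,\infty)$.

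From that point the paper proves nothing further. It notes that $u$ never returns to $0$, so the non-Lipschitz point is never visited and (C1') acts like Tang's (C1). It then cites Proposition~2.3 of Tang's paper for the oscillation and the ordering of the extrema.

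You supply that part yourself, in three pieces:
\begin{enumerate}
\item[(a)] Eventual monotonicity forces $u\to L$ with $f(L)=0$, and $L=0$ is ruled out because $E\le E(\bar r+1)<0$.
\item[(b)] Writing $w=u-1$, the coefficient $k(r)=f(u)/(u-1)\to f'(1)>0$ by (C4). Sturm comparison with a Bessel-type equation then rules out monotone convergence to $1$.
\item[(c)] The extrema are ordered via $F(u(\widetilde c_j))=E(\widetilde c_j)$, the strict decrease of $E$, and the monotonicity of $F$ on $(0,1)$ and on $(1,\alpha_*)$.
\end{enumerate}

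Your route is longer, but it is self-contained. It shows exactly where (C4) and the $C^1$ regularity of $f$ near $1$ enter. It also avoids having to check that the cited argument tolerates (C1') in place of (C1). The paper's route is shorter but rests entirely on that citation.

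One piece of glue should be made explicit. The Sturm step only contradicts eventual monotonicity. To conclude that $u$ crosses $1$ infinitely often, add the following. If $u-1$ had a fixed sign on a tail, then every critical point there would be a strict extremum of the same type, since $u''=-f(u)\neq 0$. So there is at most one such critical point, and $u$ would be eventually monotone, which you have excluded.
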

\begin{proof}
	Assume $E(\bar r)\leq 0$ and $u(\bar r) > 0$.
If $E(r) = 0$ for some $r > \bar{r}$. By monotonicity, we must have $E(s) = 0$ and $E'(s) = 0$ for all $s \in [\bar{r}, r]$. The condition $E'(s) = 0$ yields $u'(s) = 0$, meaning $u(s)$ is constant on this interval.   This implies $u \equiv 0$ or $\pm 1$. Since $u(\bar{r}) > 0$, we must have $u\equiv 1$, which contradicts the assumption.

Observe that $F(u)$ is an even function, and $F(u) < 0$ if and only if $|u| \in (0, \alpha_*)$. This implies $|u(r)| \in (0, \alpha_*)$. Combined with the fact that $u(\bar{r}) > 0$, continuity ensures that $u(r) \in (0, \alpha_*)$ for all $r \in (\bar{r}, \infty)$.
Given that the solution remains strictly positive,   our assumption (C1') reduces to the standard condition (C1) in \cite{Tang2025}. Thus, the conclusion follows by an identical application of the argument in \cite[Proposition 2.3]{Tang2025}
\end{proof}
 
After the first  double zero, the solution is not unique. But we can classify the solutions by the following  cases.

\begin{lemma}\label{lem2.4}
Assume (C1') (C2)  (C3') and (C4). If there is $s_0 \ge 0$ such that $u(s_0) = u'(s_0) = 0$, then one of the following holds:
\begin{enumerate}
    \item $u(r) \equiv 0$ for $r \in [s_0, \infty)$.
    \item There is $s_1 \ge s_0$ such that $u \equiv 0$ on $[s_0, s_1]$ and $u$ does not change sign on $(s_1, \infty)$. More precisely, $0 < |u(r)| < \alpha_*$ for $r > s_1$, and $u$ oscillates about $1$ or $-1$.
\end{enumerate}	
\end{lemma}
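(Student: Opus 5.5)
Given a double zero at $s_0$, I will split according to whether $u$ vanishes identically on $[s_0,\infty)$. If it does, we are in case (i). Otherwise set
\[
s_1:=\sup\{s\ge s_0:\ u\equiv 0 \text{ on } [s_0,s]\}<\infty .
\]
By continuity of $u$ and $u'$ we have $u(s_1)=u'(s_1)=0$, and $u\not\equiv 0$ on every right neighbourhood of $s_1$. It remains to show that after $s_1$ the solution never returns to zero, stays in $(-\alpha_*,\alpha_*)$, and oscillates about $\pm1$.

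\textbf{Step 1: energy after $s_1$.} The identity \eqref{eprime} holds wherever $u$ is $C^2$. For $s_1>0$ it holds throughout $(s_1,\infty)$, because $f\circ u$ is continuous and therefore $u\in C^2$ there. Since $E(s_1)=F(0)=0$ and $E$ is nonincreasing, $E(r)\le 0$ for all $r>s_1$. If $s_1=0$ (which forces $s_0=0$), I use $E(0)=0$ together with continuity of $E$ at $0$.

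\textbf{Step 2: $u$ leaves $0$ with a strict energy drop.} Pick $r_0>s_1$ with $u(r_0)\ne 0$, say $u(r_0)>0$; the negative case follows by oddness of $f$ applied to $-u$. I claim $E(r_0)<0$. Suppose instead $E(r_0)=0$. Then $E\equiv0$ on $[s_1,r_0]$, so $E'=0$ there, and \eqref{eprime} gives $u'\equiv0$ on $(s_1,r_0)$. This forces $u\equiv u(s_1)=0$ on that interval, contradicting $u(r_0)\ne0$. Hence $E(r)<0$ for every $r>s_1$ beyond the first point where $u\ne0$. In fact $E(r)<0$ for all $r>s_1$: if $E(r)=0$ for some $r>s_1$, then $u\equiv 0$ on $[s_1,r]$, contradicting the definition of $s_1$.

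\textbf{Step 3: no zeros and confinement after $s_1$.} From $F(u(r))\le E(r)<0$ for $r>s_1$ and the fact that $F<0$ exactly on $0<|u|<\alpha_*$, we get $0<|u(r)|<\alpha_*$ for all $r>s_1$. By connectedness $u$ has one sign on $(s_1,\infty)$, say positive.

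\textbf{Step 4: oscillation.} Fix any $\bar r>s_1$. Then $E(\bar r)<0\le 0$ and $u(\bar r)>0$. Moreover $u\not\equiv1$, since $u\to0$ as $r\downarrow s_1$. By Proposition~\ref{pro2.7}, $u$ oscillates about $1$ on $(\bar r,\infty)$, with the stated monotone sequences of critical values. Proposition~\ref{pro2.7} is stated for solutions of \eqref{eq2.1}, but its proof only uses the equation and the energy on $[\bar r,\infty)$, where $u$ stays away from $0$ and $f$ is $C^1$. It therefore applies verbatim to the restriction $u|_{[\bar r,\infty)}$.

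\textbf{Main obstacle.} The step needing care is Step 2 together with the regularity of $E$ near $s_1$. The function $u$ is only known to be a $C^2$ solution with a possibly non-Lipschitz nonlinearity there, so I must justify that $E$ is $C^1$ with $E'=-\frac{n-1}{r}u'^2$ across $s_1$, and handle the degenerate case $s_0=s_1=0$, where the $1/r$ factor is singular. The approach is to use only the integrated form
\[
E(r)-E(s)=-\int_s^r \frac{n-1}{t}\,u'(t)^2\,dt .
\]
This form holds for any $C^2$ solution on $(0,\infty)$ and extends to $s\downarrow 0$ by continuity because the integrand is integrable near $0$ (since $u'(t)=O(t)$). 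The monotonicity argument of Step 2 then goes through using only this identity.
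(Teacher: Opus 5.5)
Your proof is correct and takes essentially the paper's route: define $s_1$ as the supremum of the interval on which $u$ vanishes, use $E(s_1)=0$ together with the monotonicity of $E$, and then invoke Proposition~\ref{pro2.7} after $s_1$. The only difference is cosmetic. You derive $E<0$ and $0<|u|<\alpha_*$ on all of $(s_1,\infty)$ directly from the maximality of $s_1$, which makes the sign constancy immediate. The paper instead applies Proposition~\ref{pro2.7} at points $\bar r\downarrow s_1$ with $u(\bar r)\neq 0$ and takes the union of the resulting intervals.
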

\begin{proof}
	Let
	$
	s_1:=\sup\{s\ge s_0:\ u\equiv0\text{ on }[s_0,s]\}.
	$
	If \(s_1=+\infty\), then \(u\equiv0\) on \([s_0,\infty)\), and (i) holds.
	
	Assume \(s_1<+\infty\). Then \(u(s_1)=u'(s_1)=0\), and \(u\) is not identically
	zero to the right of \(s_1\). Choose \(\bar r>s_1\) arbitrarily close to \(s_1\)
	such that \(u(\bar r)\neq0\). Since \(E(s_1)=0\) and \(E\) is nonincreasing, we
	have \(E(\bar r)\le0\). Applying Proposition~\ref{pro2.7} to \(u\) if
	\(u(\bar r)>0\), and to \(-u\) if \(u(\bar r)<0\), gives
	$
	0<|u(r)|<\alpha_*
	\ \text{for }r>\bar r,
	$
	and \(u\) oscillates about \(1\) or \(-1\). Since \(\bar r\downarrow s_1\) is
	arbitrary, the conclusion holds on \((s_1,\infty)\).
\end{proof}

By Lemma~\ref{lem2.4}, the set of double zeros of a solution is either a single point, a closed interval, or the empty set.

\begin{remark}
	If $u$ is a ground state or a bound state, then its first double zero coincides with its support radius, i.e., $z_u = R_u$. Since the non-trivial structure of the solution is entirely contained within $(0, z_u)$, we restrict our attention to this interval. 
\end{remark}

\begin{proposition}\label{pro2.8}
Assume (C1') (C2)  (C3') and (C4).	Let $u(r)$ be a solution of \eqref{eq2.1}.  
	\begin{enumerate}
		\item[(i)] If $\alpha \in(0, \alpha_*]$ and $\alpha \neq 1$, then $u$ is positive and oscillates around $1$ in $(0,\infty)$.
		
		\item[(ii)] A nodal solution $u$ has only finitely many sign changes in $(0,z_u)$.
		
		\item[(iii)] If $u$ is a ground state or a bound state, then $E(r) > 0$ for all $r \in(0,z_u)$. If $u$ is a nodal solution with largest zero point $z_k<z_u$, then $E(r) > 0$ for all $r \in(0,z_k]$.
		
		\item[(iv)] In $(0,z_u)$, a positive solution $u$ is either a ground state, or an oscillatory function that oscillates about $1$; a nodal solution $u$ is either a bound state, or an oscillatory function that oscillates about $1$ or $-1$ after its last zero.
	\end{enumerate}
\end{proposition}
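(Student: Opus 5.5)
Each part of Proposition~\ref{pro2.8} follows from the energy monotonicity \eqref{eprime}, Proposition~\ref{pro2.5}, Proposition~\ref{pro2.7} and Lemma~\ref{lem2.4}; I take them in order.

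For (i), if $\alpha\le\alpha_*$ and $\alpha\neq1$, then $E(0)=F(\alpha)\le 0$ and $u(0)>0$. Proposition~\ref{pro2.7} with $\bar r=0$ then gives $u\in(0,\alpha_*)$ on $(0,\infty)$ with oscillation about $1$. Part (iv) of Proposition~\ref{pro2.5} already gives $z_u=+\infty$, so no double zero interferes.

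For (iii), let $u$ be a ground or bound state. Then $z_u<\infty$ and $E(z_u)=0$, because $u(z_u)=u'(z_u)=0$. By Proposition~\ref{pro2.5}(i), $E$ is strictly decreasing on $(0,z_u)$, so $E(r)>E(z_u)=0$ there; note that $\alpha\neq1$ since the constant solution is neither. For a nodal solution with last zero $z_k<z_u$, the zero is simple, so $E(z_k)=\tfrac12u'(z_k)^2>0$, and monotonicity gives $E(r)\ge E(z_k)>0$ on $(0,z_k]$.

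For (ii), take consecutive zeros $z_i<z_{i+1}$ in $(0,z_u)$. By Proposition~\ref{pro2.6}(iii) there is a critical point $c_i\in(z_i,z_{i+1})$ with $|u(c_i)|>\alpha_*$. Suppose there were infinitely many zeros in $(0,z_u)$. If $z_u=\infty$, a Sturm comparison-type argument bounds the spacing from below, because $|u|\le\alpha$ bounds $f(u)/u$ away from $0$ except near $u=0$. Then $E(z_i)=\tfrac12u'(z_i)^2\ge E(c_{i+1})=F(u(c_{i+1}))>0$, while integrating $E'=-\frac{n-1}{r}u'^2$ over each hump, where $u'^2$ is bounded below on a set of length bounded below, shows that the cumulative energy loss diverges like $\sum 1/z_i$, which is infinite since $z_i\lesssim i$. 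This contradicts $E\ge0$ along the sequence.
If instead the zeros accumulate at some finite $z^*\le z_u$, then $u(z^*)=0$, and $u'(z^*)=0$ by Rolle's theorem applied between consecutive zeros. So $z^*$ is a double zero, and since $z^*\le z_u$ we get $z^*=z_u$ by the definition of $z_u$ as the first double zero. But the $|u(c_i)|$ are decreasing and stay above $\alpha_*$, so $u$ cannot tend to $0$ near $z^*$, a contradiction.
I expect the infinite-interval case to be the main obstacle: the lower bound on zero spacing and the divergence of the energy loss are where care is needed.

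For (iv), consider $u$ on $(0,z_u)$ after its last zero, or from $0$ if $u$ is positive. Either $z_u<\infty$, in which case $u$ is a ground or bound state by definition. Or $z_u=\infty$ and $u$ never reaches a double zero. In the latter case, if $E(r)>0$ for all $r$, then $E\downarrow L\ge0$; since $u$ keeps one sign, $u$ and $u'$ converge along suitable sequences and the limit must be a stationary point with $F\ge 0$, which forces $u\to0$. This in turn forces a double zero via the compact support principle, a contradiction. Hence $E(\bar r)\le0$ at some $\bar r$, and Proposition~\ref{pro2.7}, applied to $u$ or $-u$, yields the oscillation about $\pm1$.
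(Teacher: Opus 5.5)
Parts (i) and (iii) follow the paper's argument. Your treatment of finite accumulation in (ii) is correct and shorter than the paper's, which bounds $\bar b_i-b_i$ from below via Proposition~\ref{pro2.5}(ii). In your version, $c_i\to z^*$ with $u(z^*)=0$ already contradicts $|u(c_i)|>\alpha_*$ by continuity.

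\textbf{The gap is the case $z_u=\infty$ in (ii).} The paper does not reprove this case; it imports it from \cite[Proposition~2.4(ii)]{Tang2025}. Your lower bound $c/z_{i+1}$ on the energy loss per hump is fine. But it is summable as soon as the gaps $z_{i+1}-z_i$ grow geometrically. So everything rests on $z_i\lesssim i$, i.e.\ an \emph{upper} bound on the gaps, and nothing you wrote supplies it.

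Sturm comparison cannot supply it. From $f(u)/u<1$ it gives only a lower bound on gaps. An upper bound would need $f(u)/u\ge c>0$, which fails on all of $\{0<|u|\le1\}$ (where $f(u)/u=1-|u|^{q-2}\le0$), not just near $u=0$.

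The missing idea is to bound the gaps through the energy; this is exactly where $q<2$ enters.
\begin{itemize}
\item Since $E\ge0$ throughout, $\tfrac12u'^2\ge-F(u)$, and $u$ is monotone on $(z_i,c_i)$ and on $(c_i,z_{i+1})$.
\item Hence the time a hump spends in $\{|u|\le\alpha_*-\epsilon\}$ is at most $2\int_0^{\alpha_*-\epsilon}ds/\sqrt{-2F(s)}$. This is finite because $-F(s)\sim s^q/q$ with $q<2$. For a nonlinearity with $f'(0)<0$ the integral diverges logarithmically, which is why the superlinear case is harder.
\item On $\{|u|\ge\alpha_*-\epsilon\}$ one has $|f(u)|\ge\delta>0$, and $|u'|\le C$ by the energy bound. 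So for $r\ge 2(n-1)C/\delta$ the damping term is dominated: $u''$ has the sign of $-u$ with $|u''|\ge\delta/2$, and the time spent there is at most $4C/\delta$.
\item Thus $z_{i+1}-z_i\le D$ for large $i$, so $\sum 1/z_i=\infty$. Your per-hump estimate then contradicts $\int_0^\infty\frac{n-1}{r}u'^2\,dr\le E(0)$.
\end{itemize}

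A smaller point in (iv). The phrase ``since $u$ keeps one sign'' does not give convergence, because solutions oscillating about $1$ also keep one sign. Also, a subsequential limit along which $u'\to0$ need not be a zero of $f$. Argue as the paper does:
\begin{itemize}
\item If $E>0$ after the last zero, every critical point $c$ there has $F(u(c))=E(c)>0$. Hence $|u(c)|>\alpha_*$ and $u(c)u''(c)=-u(c)f(u(c))<0$.
\item So every such critical point is a strict maximum of $|u|$, and there is at most one.
\item Therefore $u$ is eventually monotone and $u\to u_\infty$ with $f(u_\infty)=0$ and $F(u_\infty)=\lim E\ge0$.
\item This rules out $\pm1$ and leaves $u_\infty=0$, after which the compact support principle gives the contradiction as you wrote.
\end{itemize}
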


\begin{proof}
	(i) follows directly from Proposition \ref{pro2.7}, since $E(0) \leq 0$.
	
	(ii) Suppose for contradiction that a nodal solution $u$ has infinitely many zeros. Let these zeros be $z_1 < z_2 < \cdots$ with $\lim_{i\to\infty} z_i = z\in(0,z_u]$.
	Let $c_i\in(z_i,z_{i+1})$ be a critical point with $F(u(c_i))> E(z_{i+1})\geq0=F(\alpha_*)$. So $|u(c_i)|>\alpha_*$ and $u''(c_i)=-f(u(c_i))\neq 0$. This implies that $c_i$ is the only critical point of $u$ in $(z_i,z_{i+1})$.
 Then $u$ is monotone in $(c_{i-1},c_i)$, and we can
	choose $b_i, \bar{b}_i \in (c_{i-1}, c_i)$ such that 
	\[b_i < \bar{b}_i,\quad  |u(b_i)| = |u(\bar{b}_i)| = \alpha_*,\ \text{and}\ |u(r)|\leq \alpha_*\ \text{for}\  r\in [b_i,\bar b_i].\]
By Proposition \ref{pro2.5} (ii), we have 
\[
2\alpha_* = |u(b_i)-u(\bar b_i)|\leq \max_{r\in(b_1,\bar b_1)}|u'(r)||\bar b_i - b_i|.
\]
Therefore, $\bar b_i - b_i $ is bounded below for all $i$. This implies that $z = z_u = +\infty$.
 The proof of  (ii) then follows exactly the same argument as in \cite[Proposition 2.4(ii)]{Tang2025}. This is because the proof relies solely on the continuity and sign properties of $f(u)$ and $F(u)$, and does not involve any derivatives of $f$. Consequently, the non-Lipschitz singularities at $u=0$ 
 relaxed in (C1')  and (C3') are entirely irrelevant to this argument.
	
	(iii) follows directly from Proposition \ref{pro2.5} (i) and the fact that $E(z_u) =0$ and  $E(z_k)>0$ since $z_k$ is a simple zero.
	
	(iv) Let $u$ be a nodal solution with $z_u=+\infty$. According to (ii), we assume its last zero point is $z_k$. If $E(r) \leq 0$ at some $r > z_k$, then Proposition \ref{pro2.7} implies that $u$ oscillates about $1$ or $-1$ in $(z_k, \infty)$. If $E(r) > 0$ for all $r > z_k$, then $u(r)$ has no critical points at which $|u| \leq \alpha_*$. Thus $u$ has at most one critical point in $(z_k, \infty)$, and is monotone for sufficiently large $r$. Clearly, $u_\infty = \lim_{r \to \infty} u(r)$ is finite in view of Proposition \ref{pro2.5} (iii), and $f(u_\infty) = 0$. Since $E(r) > 0$ for all $r > 0$, $u_\infty$ is neither $1$ nor $-1$. Thus $u_\infty = 0$ and the compact support principle in \cite{pucciStrongMaximumPrinciple2004} implies that $u$ is a bound state with compact support. 
\end{proof}
\subsection{Phases and labels}
Under the assumption of Proposition \ref{pro2.8},
if \(u\) is a bound state   with $z_u<+\infty$,  then it has exactly \(k\) simple zeros before $z_u$, \(z_{1}<\cdots<z_{k}<z_u\). It follows from Proposition \ref{pro2.6} (iii) that \(u\) has \(k+1\) critical points \(0=c_{0}<c_{1}<\cdots<c_{k}\). We decompose \((0, z_u)\) into \(k\) phases and a semi-tail phase separated by these critical points:
\begin{equation}\label{eq2.9}
	(0,z_u)=\underbrace{(0,c_{1}]}_{\text{Phase }1}\cup\underbrace{(c_{1},c_{2}]}_{\text{Phase }2}\cup\cdots\cup \underbrace{(c_{i-1},c_{i}]}_{\text{Phase }i}\cup\cdots\cup\underbrace{(c_{k-1},c_{k}]}_{\text{Phase }k}
	\cup\underbrace{(c_{k},z_u)}_{\text{Semi-tail phase}}.
\end{equation}
 
 If \(u\) is a nodal solution with $z_u=+\infty$, then it has exactly \(k\) simple zeros and oscillates in \((z_{k},\infty)\). We make a similar decomposition that differs from 
 \eqref{eq2.9} only in the last two intervals:
 \begin{equation*} 
 	(0,\infty)=\underbrace{(0,c_{1}]}_{\text{Phase }1}\cup\underbrace{(c_{1},c_{2}]}_{\text{Phase }2}\cup\cdots\cup \underbrace{(c_{i-1},c_{i}]}_{\text{Phase }i}\cup\cdots\cup\underbrace{(c_{k-1},z_{k}]}_{\text{Semi-phase }k}
 	\cup\underbrace{(z_{k},\infty)}_{\text{Tail phase}}.
 \end{equation*}
The function \(u\) decreases in Phase \(i\) if \(i\) is odd, and increases in Phase \(i\) if \(i\) is even.

Within a single phase \((c_{i-1},c_{i}]\), \(u\), \(f(u)\), and \(F(u)\) all change signs. We label the points where \(F(u)=0\) or \(f(u)=0\) besides \(r=z_{i}\) as follows:
\begin{equation*} 
	\begin{cases}
		|u(b_{i})|=|u(\overline{b}_{i})|=\alpha_{*}, \ \ \ |u(r_{i})|=|u(\overline{r}_{i})|=1, \\
		c_{i-1}<b_{i}<r_{i}<z_{i}<\overline{r}_{i}<\overline{b}_{i}<c_{i}.
	\end{cases}
\end{equation*}
We use these numbers to decompose Phase \(i\) into subsets in which \(u\), \(f(u)\), and \(F(u)\) remain the same sign. For an odd number \(i\),
\begin{align*} 
	(c_{i-1},c_{i}] ={}& \overbrace{\underbrace{(c_{i-1},b_{i})}_{f, F>0}\cup\underbrace{\{b_i\}}_{F=0}\cup\underbrace{(b_{i},r_{i})}_{f>0,F<0}
	\cup\underbrace{\{r_i\}}_{f=0}\cup\underbrace{(r_{i},z_{i})}_{f,F<0}}^{u>0}
	\cup\underbrace{\{z_i\}}_{u,f,F=0} 
	\cup 
	 \overbrace{\underbrace{(z_i,\overline r_i)}_{f>0, F<0}\cup\underbrace{\{\overline r_i\}}_{f=0}\cup\underbrace{(\overline r_i,\overline b_i)}_{f,F<0}
		\cup
		\underbrace{\{\overline b_i\}}_{F=0} \cup\underbrace{(\overline b_i,c_i)}_{f<0,F>0}}^{u<0},
\end{align*}
and for an even number \(i\),
\begin{align*} 
	(c_{i-1},c_{i}] =& \overbrace{\underbrace{(c_{i-1},b_{i})}_{f<0, F>0}\cup\underbrace{\{b_i\}}_{F=0}\cup\underbrace{(b_{i},r_{i})}_{f,F<0}
	\cup\underbrace{\{r_i\}}_{f=0}\cup\underbrace{(r_{i},z_{i})}_{f>0,F<0}}^{u<0}
\cup\underbrace{\{z_i\}}_{u,f,F=0} 
\cup 
\overbrace{\underbrace{(z_i,\overline r_i)}_{f, F<0}\cup\underbrace{\{\overline r_i\}}_{f=0}\cup\underbrace{(\overline r_i,\overline b_i)}_{f>0,F<0}
	\cup
	\underbrace{\{\overline b_i\}}_{F=0} \cup\underbrace{(\overline b_i,c_i)}_{f,F>0}}^{u>0}.
\end{align*}

\subsection{Positivity of energy-type functions}

We first consider the case \(n\ge3\). The Pohozaev function associated with equation \eqref{eq2.1} is defined by
\begin{equation*} 
P(r) := 2r^n E(r) + (n - 2)r^{n-1}uu'
= r^n \left[ u'^2 + 2F(u) \right] + (n - 2)r^{n-1}uu'.
\end{equation*}
We will establish the positivity of \( P(r) \) and two related functions
\begin{equation}\label{P1}
P_1(r) := r^n \left[ u'^2 + uf(u) \right] + (n - 2)r^{n-1}uu',
\end{equation}
and
\begin{equation*}
P_2(r) := r^n \left[ u'^2 + \frac{n - 2}{n}uf(u) \right] + (n - 2)r^{n-1}uu'.
\end{equation*}
We have
\begin{align}
    P'(r) &= r^{n-1} h(u), \quad \text{where} \quad h(u) = 2n F(u) - (n-2)u f(u) \text{ is given in (C5)}, \label{eq:P_prime} \\
    P_2'(r) &= -\frac{r^n u'}{n} h_2(u), \quad \text{where} \quad h_2(u) = (n+2) f(u) - (n-2) u f'(u)\text{ is given in (C7)}. \label{eq:P2_prime}
\end{align}
 
\begin{proposition}\label{pro2.9} Assume (C1') (C2)  (C3') hold and $n\geq 3$. Let \(u(r)\) be the solution of (2.1). Then the following statements are valid for \(r\in(0,z_{k}]\) if \(u\) is a nodal solution with the largest zero \(z_{k}<z_u\), or for all \(r\in(0,z_u)\) if \(u\) is a ground state or a bound state.
	
	(i) Assume (C5) holds. Then \(P(r)>0\); in addition, if \(u\) is a ground state or a nodal solution, then \(u(0)>\alpha^{*}\).
	
	(ii) If (C5) and (C6) hold, then \(P_{1}(r)>0\), and \(\omega^{\prime}(r)>0\) as long as \(u(r)\neq 0\), where \(\omega(r):=-ru^{\prime}(r)/u(r)\).
	
	(iii) If (C7) holds, \(P_{2}(r)>0\) and \(-[P(r)/r^{n}]^{\prime}>0\).
\end{proposition}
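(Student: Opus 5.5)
The proof will follow \cite[Proposition~2.5]{Tang2025}. The only new ingredient is to check that the Pohozaev-type functions behave correctly near the simple zeros of $u$ and near $z_u$. There $f$ is not $C^1$, but $P$, $P_1$ and $P_2$ involve only $u,u',F(u),uf(u)$, which are continuous. Their derivatives exist away from the zeros of $u$. \eqref{eq:P_prime} holds classically on all of $(0,z_u)$ since $f$ is continuous. For $P_2$, the formula \eqref{eq:P2_prime} involves $uf'(u)\sim (q-1)|u|^{q-1}\operatorname{sgn}u$, which is bounded near zeros. So $P_2$ is absolutely continuous and the monotonicity arguments pass through the finitely many simple zeros.

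\textbf{(i).} Let $r$ be the right endpoint, $z_k$ for a nodal solution or $r\uparrow z_u$ for a ground or bound state. At that endpoint $E>0$ by Proposition~\ref{pro2.8}(iii) (respectively $E\to0$), and $uu'=0$ at $z_k$. Hence $P(z_k)=2z_k^nE(z_k)>0$, and $P(r)\to0$ as $r\uparrow z_u$. Since $P(0)=0$ and $P'=r^{n-1}h(u)$, I argue by contradiction. Suppose $P(\rho)\le0$ at some $\rho$. Using (C5), $h(u)<0$ exactly when $|u|<\alpha^*$. The set where $|u|<\alpha^*$ is entered only after $|u|$ has decreased below $\alpha^*$. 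Following Tang, $P$ is increasing while $|u|>\alpha^*$ and decreasing while $|u|<\alpha^*$. On each phase, $|u|$ first decreases from $|u(c_{i-1})|$ to $0$ and then increases to $|u(c_i)|>\alpha_*$. So on each phase $P$ has the shape increase, decrease, increase. Its minimum over a phase is therefore attained either at a point $s$ with $|u(s)|=\alpha^*$ on the increasing branch, or at the left end. Positivity at the end of the phase then follows from the endpoint values, because $P(c_i)=2c_i^nF(u(c_i))>0$. For the tail, where $|u|$ decreases to $0$, the limit value $0$ forces $P>0$ before it, since $P$ is decreasing on the final stretch where $|u|<\alpha^*$.

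The statement $u(0)>\alpha^*$ follows directly. If $\alpha\le\alpha^*$, then $P'<0$ on an initial interval, so $P<0$ there, a contradiction.

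\textbf{(ii).} I write $P_1=P+r^n h_1(u)$ with $h_1\ge0$ by (C6), which gives $P_1\ge P>0$. Next I compute $\omega'=\bigl(-u'u-ru''u+ru'^2\bigr)/u^2$. Substituting $u''=-\frac{n-1}{r}u'-f(u)$ gives
\[
r^{n-1}u^2\omega'=r^{n-2}\bigl(P_1(r)\bigr)\cdot\tfrac{1}{r}\cdot r,
\]
that is, $u^2\omega'=r^{1-n}P_1$ after simplification. Hence $\omega'>0$ wherever $u\neq0$.

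\textbf{(iii).} $P_2(0)=0$ and $P_2'=-\frac{r^nu'}{n}h_2(u)$. I use (C7), which says the sign of $h_2(u)$ changes at $|u|=\tilde\alpha$, together with the sign of $u'$ on each monotone piece of the phases. This determines the monotonicity intervals of $P_2$. At the endpoints, $P_2(c_i)=\frac{n-2}{n}c_i^nu f(u)>0$ since $|u(c_i)|>\alpha_*>1$. Also $P_2(z_k)=z_k^nu'^2>0$, and $P_2\to0$ at $z_u$. The same minimum argument as in (i) then gives $P_2>0$. Finally,
\[
-(P/r^n)'=-\bigl(h(u)/r-n(n-2)uu'/r^2\bigr)+\dots,
\]
and a direct computation reduces it to $\frac{2(n-1)}{r^{n+1}}P_2$, hence it is positive.

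\textbf{Main obstacle.} I expect the hardest step to be the bookkeeping of sign intervals in (i) and (iii). The key point is to verify that no interior minimum of $P$ or $P_2$ can be nonpositive. In particular, in the sublinear case one must confirm that $\tilde\alpha$ and $\alpha^*$ are related to $\alpha_*$ so that the critical values $|u(c_i)|>\alpha_*$ lie on the correct side. This needs the explicit form of $f$.
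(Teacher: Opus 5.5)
Your route is the paper's: invoke \cite[Proposition~2.5]{Tang2025} and check that $P_2$ stays $C^1$ across the simple zeros, because $h_2$ extends continuously with $h_2(0)=0$. Note the sign: $uf'(u)\sim-(q-1)|u|^{q-2}u$. Part (ii) is fine. The $P_2$-half of (iii) is also essentially fine, since every interior local minimum of $P_2$ lies at a zero of $u$ or at a critical point $c_i$, where $P_2=r^nu'^2>0$ or $P_2=\frac{n-2}{n}r^nuf(u)>0$. The correct identity is $-(P/r^n)'=\frac{n}{r^{n+1}}P_2$, not $\frac{2(n-1)}{r^{n+1}}P_2$; this does not affect the sign.

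The genuine gap is in (i). Since $P'=r^{n-1}h(u)$ with $h$ even, the interior local minima of $P$ are the points $s$ where $|u|$ crosses $\alpha^*$ \emph{upward}. These are neither zeros nor critical points of $u$. Knowing $P(c_i)>0$ and that $P$ increases on $(s,c_i)$ says nothing about the sign of $P(s)$.

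Your proposed remedy, placing $|u(c_i)|$ on a convenient side of $\alpha^*$ via the explicit $f$, is not available. The statement is made under abstract hypotheses. Moreover, $h(\alpha_*)=-(n-2)\alpha_*f(\alpha_*)<0$ shows $\alpha^*>\alpha_*$ when $n\ge3$, and nothing forces $|u(c_i)|\le\alpha^*$.

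The missing idea is to write $P=2r^nE+(n-2)r^{n-1}uu'$ and use $E>0$ on the relevant interval (Proposition~\ref{pro2.8}(iii)). Then $P>0$ wherever $uu'\ge0$, which covers every upward crossing. On the remaining intervals $(c_{i-1},z_i)$ and $(c_k,z_u)$, where $|u|$ decreases, $P$ is increasing-then-decreasing (the first part possibly empty). So its infimum is either at an endpoint, where $uu'=0$ and $P=2r^nE>0$, or is the limit $0$ at $z_u$, approached from above.

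Your derivation of $u(0)>\alpha^*$ is also circular: positivity of $P$ near $r=0$, where $P(0)=0$, already requires $\alpha>\alpha^*$. Prove it first instead. Suppose $\alpha\le\alpha^*$. Then $|u|<\alpha^*$ on $(0,z_u)$ by Proposition~\ref{pro2.5}(iii), using $\alpha>\alpha_*>1$ from Proposition~\ref{pro2.6}(i). Hence $P$ decreases strictly from $P(0)=0$. This contradicts $P(z_1)=2z_1^nE(z_1)>0$ for a nodal solution, or $P(r)\to0$ as $r\uparrow z_u$ for a ground state. The same argument gives $\alpha>\tilde\alpha$, which you also need for $P_2$ to start positive in (iii).
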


\begin{proof}
	The proof is the same as that of \cite[Proposition~2.5]{Tang2025}. The only
	point where the regularity of \(f\) at the origin is relevant is the derivation of
	\(P_2'\). Under (C3'), the function
	\[
	h_2(u)=(n+2)f(u)-(n-2)uf'(u)
	\]
	has a continuous extension to \(u=0\), with \(h_2(0)=0\). Indeed,
	\(f(u)=-u^{q-1}+o(u^{q-1})\) and \(u f'(u)=-(q-1)u^{q-1}+o(u^{q-1})\) as
	\(u\to0^+\). Hence \(P_2\) is \(C^1\) across simple zeros of \(u\), and the
	argument of \cite[Proposition~2.5]{Tang2025} applies unchanged.
\end{proof}

We next record the corresponding planar version. In dimension \(n=2\), the
functions above reduce to
\[
P(r)=r^2\left[u'(r)^2+2F(u(r))\right]=2r^2E(r),
\quad
P_1(r)=r^2\left[u'(r)^2+u(r)f(u(r))\right],
\]
and
\[
P_2(r)=r^2u'(r)^2.
\]
Moreover, when \(n=2\),
$
h(u)=4F(u),\  h_2(u)=4f(u).
$
Hence (C5) follows from (C1') and (C2), with \(\alpha^*=\alpha_*\), and (C7)
follows from the sign condition on \(f\), with \(\widetilde\alpha=1\).

\begin{proposition}[Planar version]\label{pro2.9-planar}
	Assume (C1') (C2)  (C3') and \(n=2\). Let \(u(r)\) be the solution of \eqref{eq2.1}. Then the following
	statements are valid for \(r\in(0,z_k]\) if \(u\) is a nodal solution with largest
	zero \(z_k<z_u\), or for all \(r\in(0,z_u)\) if \(u\) is a ground state or a bound state.
	
	\begin{enumerate}
		\item[(i)] \(P(r)>0\). In addition, if \(u\) is a ground state or a nodal solution,
		then \(u(0)>\alpha^*=\alpha_*\).
		
		\item[(ii)]Assume (C6). Then \(P_1(r)>0\), and \(\omega'(r)>0\) as long as \(u(r)\neq0\), where
		$\omega(r):=-ru^{\prime}(r)/u(r).$

		\item[(iii)]   \(P(r)/r^2\) is strictly decreasing on $(0,z_u)$.
	\end{enumerate}
\end{proposition}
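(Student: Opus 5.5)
Everything rests on the energy monotonicity \eqref{eprime} and Proposition~\ref{pro2.8}(iii). For $n=2$ we have $P(r)=2r^2E(r)$ and $P'(r)=r\,h(u)=4rF(u)$. The latter also follows from $(r^2E)'=2rE+r^2E'=2rE-ru'^2=2rF(u)$, which is valid across simple zeros of $u$ since $E$ is $C^1$ there.

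\emph{Part (i).} Proposition~\ref{pro2.8}(iii) gives $E(r)>0$ on $(0,z_k]$ for a nodal solution and on $(0,z_u)$ for a ground or bound state. Hence $P=2r^2E>0$ on the same ranges. For the second claim, a ground state or nodal solution has $\alpha>\alpha_*$ by Proposition~\ref{pro2.6}(i). In the planar case $h=4F$, so the zero of the Pohozaev function is $\alpha^*=\alpha_*$, and the claim follows.

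\emph{Part (iii).} We have $P(r)/r^2=2E(r)$, and by Proposition~\ref{pro2.5}(i) $E$ is strictly decreasing on $(0,z_u)$ since $\alpha\ne1$; the case $\alpha=1$ gives $z_u=\infty$ and the trivial solution, which we exclude. This is the planar replacement of $-[P/r^n]'>0$. Positivity of $P_2=r^2u'^2$ is immediate away from critical points and needs no statement.

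\emph{Part (ii).} This is the main step. Write $P_1=P+r^2h_1(u)$, where $h_1=uf-2F\ge0$ by (C6). Then $P_1\ge P>0$ on the stated ranges by (i).

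For $\omega=-ru'/u$ at points where $u\ne0$, compute using $u''=-u'/r-f(u)$ (since $n=2$):
\[
\omega'=\frac{-u'-ru''}{u}+\frac{ru'^2}{u^2}
=\frac{rf(u)}{u}+\frac{ru'^2}{u^2}
=\frac{r\,(u'^2+uf(u))}{u^2}
=\frac{P_1}{r\,u^2}>0 .
\]
This mirrors the general identity $\omega'=P_1/(r^{n-1}u^2)$ used in \cite[Proposition~2.5]{Tang2025}. Near $r=0$ nothing singular occurs, because $u(0)=\alpha\ne0$ and $u'(0)=0$.

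The only delicate point is regularity at zeros of $u$. This is harmless here, since $\omega'$ is only claimed where $u\neq0$, and $P_1$ is continuous across simple zeros: $uf(u)\to0$ because $f$ is continuous.
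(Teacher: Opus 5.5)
Your proposal is correct and follows the paper's proof essentially step for step. The three ingredients are the same: $P=2r^2E>0$ from Proposition~\ref{pro2.8}(iii), $P_1=P+r^2h_1(u)\ge P$ by (C6) together with $\omega'=P_1/(r u^2)$, and $P/r^2=2E$ being strictly decreasing by Proposition~\ref{pro2.5}(i). The only difference is that you derive the identity $\omega'=P_1/(r u^2)$ directly from the equation with $n=2$ instead of quoting it, which is a harmless and welcome addition.
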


\begin{proof}
	Since \(n=2\), we have
	$
	P(r)=2r^2E(r).
	$
	By Proposition \ref{pro2.8} (iii), \(E(r)>0\) on the relevant interval. Hence
	$
	P(r)>0.
	$
	If \(u\) is a ground state or a nodal solution, Proposition \ref{pro2.6} (i) gives
	$
	u(0)>\alpha_*=\alpha^*.
	$
	This proves (i).
	
	For (ii), note that
	$
	P_1(r)=P(r)+r^2\bigl(uf(u)-2F(u)\bigr)\geq P(r)>0,
	$
	 by (C6) and  
	 \(P(r)>0\).
	Using the identity
	\[
	\omega'(r)=\frac{P_1(r)}{r^{n-1}u(r)^2},
	\]
	with \(n=2\), we get
	$
	\omega'(r)>0
	$
	as long as \(u(r)\neq0\). This proves (ii).
	
Finally, for (iii), in the two-dimensional case, we have 
	  \(P(r)/r^2=2E(r)\),
 It follows from equation \eqref{eprime} and Proposition \ref{pro2.5} (i)  that it is strictly decreasing on $(0,z_u)$.  
\end{proof}

\subsection{The concavity}
This subsection is not used in the proof of the main theorem. It is included only
to record a qualitative concavity property of ground states and nodal solutions;
the auxiliary-function positivity needed below follows from
Propositions~\ref{pro2.9} and~\ref{pro2.9-planar}.
\begin{proposition}
	Assume that (C1') (C2) (C3'), (C4)--(C6) and (C8) hold and $n\geq 2$.
	A ground state \( u \) of \eqref{eq2.1} changes concavity at a unique point of inflection in \((0, z_u)\). For a nodal solution \( u \) of (2.1), there is a unique point of inflection between each critical point \( c_{i-1} \) and its nearest zero \( z_i > c_{i-1} \).
\end{proposition}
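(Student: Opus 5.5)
We study the sign changes of $u''$ on each interval $(c_{i-1},z_i)$ (and, for a ground state, on $(0,z_u)$), following \cite[Proposition~2.6]{Tang2025}. By oddness of $f$ we may take $u>0$ on the interval, so $u'<0$ there. At $r=c_{i-1}$ we have $u''=-f(u(c_{i-1}))<0$: for $i=1$ this uses $\alpha>\alpha_*>1$ and $u''(0)=-f(\alpha)/n$, and for $i\ge2$ it uses $|u(c_{i-1})|>\alpha_*$ from Proposition~\ref{pro2.6}(iii). Near $z_i$ the solution crosses zero with $u'(z_i)\neq0$ while $f(u)\to0$, so
\[
u''(z_i)=-\frac{n-1}{z_i}u'(z_i)>0 .
\]
In the ground state case, Proposition~\ref{pro2.6}(iv) gives $|u'|\sim\sqrt{2/q}\,u^{q/2}$, and $f(u)\sim-u^{q-1}$ dominates $u'/r$, so $u''\sim u^{q-1}>0$ near $z_u$. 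Thus $u''$ changes sign on each relevant interval, and existence of an inflection point follows.

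For uniqueness we work with the quantity $w=u''$, or more conveniently with $\omega(r)=-ru'/u$, which is strictly increasing by Propositions~\ref{pro2.9}(ii) and~\ref{pro2.9-planar}(ii) under (C5), (C6). Writing
\[
u''=-\frac{n-1}{r}u'-f(u),
\]
the condition $u''=0$ becomes
\[
(n-1)\,\omega(r)=\frac{r^2 f(u)}{u}\cdot\frac{\omega}{\omega}\quad\Longleftrightarrow\quad \frac{(n-1)(-u')}{r}=f(u).
\]
Inflection points can only occur where $f(u)>0$, i.e.\ where $u>1$, so all of them lie in $(c_{i-1},r_i)$. On that set we will show that $u''$ crosses zero transversally from negative to positive, which forces uniqueness. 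Differentiating the equation,
\[
u'''=-\frac{n-1}{r}u''+\frac{n-1}{r^2}u'-f'(u)u',
\]
so at a zero $\rho$ of $u''$ we get
\[
u'''(\rho)=-u'\Bigl(f'(u)-\frac{n-1}{\rho^2}\Bigr).
\]
Since $-u'>0$, positivity reduces to $f'(u)>(n-1)/\rho^2$. Using $(n-1)/\rho=f(u)/(-u')$ at $\rho$, this becomes
\[
u f'(u)\,\omega(\rho)>(n-1)^{\,}\ldots
\]
which after substitution reads $f'(u)/f(u)>\dfrac{1}{\rho\,(-u')}$, equivalently
\[
\frac{f(u)}{u f'(u)}<\frac{\omega(\rho)}{n-1}.
\]
This is where (C8) and the monotonicity of $\omega$ enter. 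Along $(c_{i-1},r_i)$ the value $u$ decreases, so $f(u)/(uf'(u))$ decreases in $r$ by (C8), while $\omega(r)$ increases. The function
\[
g(r)=\frac{\omega(r)}{n-1}-\frac{f(u)}{uf'(u)}
\]
is therefore strictly increasing. At any zero of $u''$ we have $\omega/(n-1)=f(u)/(u\,(n-1)\cdots)$; more precisely, the zero condition gives $\omega(\rho)=\rho^2 f(u)/((n-1)u)$. The main obstacle is to arrange this bookkeeping cleanly: one has to check that the sign of $u'''$ at every zero of $u''$ agrees with the sign of the monotone quantity $g$, or with an analogous combination. If this exact reduction fails, the fallback is the approach of Tang: compare $u''$ with $\rho\mapsto$ the monotone function $\omega$ via the ratio $u''/u'$, which satisfies a Riccati-type identity.

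Once every zero of $u''$ is shown to be a transversal upcrossing, there is exactly one inflection point between $c_{i-1}$ and $z_i$. Regularity is not an issue in this step: $f$ is $C^1$ away from $0$, and the inflection lies where $u>1$. The ground state case is handled identically on $(0,z_u)$.
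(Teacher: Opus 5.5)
Your reduction matches the paper's. On $[r_i,z_i]$ one has $u''=-\frac{n-1}{r}u'-f(u)>0$, so only $(c_{i-1},r_i)$, where $u>1$, matters. Existence also follows as you say, from $u''(c_{i-1})<0<u''(r_i)$. The paper then simply invokes Tang's argument on that interval.

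The gap is in the uniqueness step, which you leave unfinished and state incorrectly where you do state it. At a zero $\rho$ of $u''$, the correct relation is $(n-1)/\rho^2=f(u)/(\rho(-u'))=f(u)/(\omega(\rho)u)$. Hence
\[
u'''(\rho)=-u'(\rho)\Bigl(f'(u)-\frac{n-1}{\rho^2}\Bigr)=\frac{-u'(\rho)f'(u)}{\omega(\rho)}\Bigl(\omega(\rho)-\frac{f(u)}{uf'(u)}\Bigr).
\]
So the sign of $u'''(\rho)$ is that of $g:=\omega-f(u)/(uf'(u))$, with no factor $n-1$. Here $\omega>0$ because $uu'<0$, and $f'>0$ on $(1,\infty)$, since (C8) needs $f'\neq 0$ there and $f'(1)>0$ by (C4). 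Your criterion $f(u)/(uf'(u))<\omega/(n-1)$ is not equivalent to $u'''(\rho)>0$. More importantly, even with the correct $g$, strict monotonicity of $g$ does not give $g(\rho)>0$ at every zero of $u''$. That positivity is exactly what your claim ``every zero of $u''$ is a transversal upcrossing'' needs, and nothing in the proposal supplies it.

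The missing step is short. Let $\rho_1$ be the first zero of $u''$ in $(c_{i-1},r_i)$. Since $u''<0$ on $(c_{i-1},\rho_1)$, we get $u'''(\rho_1)\ge 0$, so $g(\rho_1)\ge0$. Now $g$ is strictly increasing: $\omega'>0$ by Propositions~\ref{pro2.9}(ii) and~\ref{pro2.9-planar}(ii), and $f(u)/(uf'(u))$ decreases in $r$ by (C8) because $u$ decreases. Hence $g>0$ on $(\rho_1,r_i)$. Every zero of $u''$ after $\rho_1$ is then a strict upcrossing, and at most one can occur: the first zero after a strict upcrossing would have $u'''\le 0$. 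So $u''$ changes sign exactly once on $(c_{i-1},r_i)$. Note that $\rho_1$ may itself be degenerate if $g(\rho_1)=0$, possibly a touching zero. So ``every zero is transversal'' is more than you can claim, but uniqueness of the inflection point still holds. With this, and the same argument on $(0,r_1)$ for the ground state, your proof closes along the route the paper imports from Tang.
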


\begin{proof}
For odd $i$, $u' < 0$ and $u > 0$ on $(c_{i-1}, z_i)$, which implies $u'' = -\frac{n-1}{r}u' - f(u) > 0$ whenever $0 \le u < 1$. Conversely, for even $i$, $u' > 0$ and $u < 0$ on $(c_{i-1}, z_i)$, yielding $u'' = -\frac{n-1}{r}u' - f(u) < 0$ whenever $-1 < u \le 0$. 
Hence, the argument is confined to the regions where $|u| > 1$. The   conditions (C1') and (C3') are therefore bypassed, and the proof follows verbatim from \cite[Proposition 2.6]{Tang2025}.

Furthermore, the argument is independent of the dimension restriction $n \ge 3$. In \cite[Proposition 2.6]{Tang2025}, conditions (C5) and (C6) are invoked only to ensure the properties of $\omega$, which hold for $n \ge 3$ by Proposition \ref{pro2.9} (ii) and for $n = 2$ by Proposition \ref{pro2.9-planar} (ii).
     
\end{proof}

We note that, for a nodal solution \(u\) with zeros \(z_{1} < \cdots < z_{k}\), its convexity in the intervals \((z_{i}, c_{i})\) for \(1 \leq i \leq k-1\) and \((z_{k}, \infty)\) is not addressed in this proposition.

\section{Analysis in Phase 1}

In this section we study the variation
\[
v(r)=\partial_\alpha u(r,\alpha)
\]
in the first phase. Recall that
 \(u=u(r,\alpha)\) is the solution of
\[
u''+\frac{n-1}{r}u'+f(u)=0,\qquad
u(0)=\alpha>0,\quad u'(0)=0.
\]

\begin{lemma}
\label{lem-global-v}
Suppose that (C1'), (C2), (C3'), and (C4) are satisfied with $n \ge 2$. 
Let $u(r,\alpha)$ solve \eqref{eq2.1} with $u(0)=\alpha > 0$. Then the map $(r,\alpha) \mapsto u(r,\alpha)$ is continuously differentiable on $[0, z_u) \times (0, +\infty)$, and its $\alpha$-derivative $v = \partial_\alpha u$ satisfies
\begin{equation}\label{eq:v-equation}
(r^{n-1}v')' + r^{n-1}f'(u)v = 0, \qquad v(0) = 1, \quad v'(0) = 0.
\end{equation}
Additionally, $v \in C^1([0, z_u)) \cap C^2([0, z_u) \setminus \mathcal{Z})$, where $\mathcal{Z}$ is the zero set of $u$ in $[0, z_u)$, and $v$ has no double zeros in this interval.
\end{lemma}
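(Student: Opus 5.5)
The plan is to prove differentiability locally, segment by segment, and then glue. Fix $\alpha_0>0$ and a compact interval $[0,R]\subset[0,z_{u(\cdot,\alpha_0)})$. On $[0,R]$ the solution $u_0:=u(\cdot,\alpha_0)$ has only finitely many zeros $z_1<\dots<z_m$, all simple by \eqref{eqzero}. I will choose a partition $0=s_0<s_1<\dots<s_N=R$ such that each subinterval either keeps $|u_0|\ge\eta>0$ or contains exactly one zero $z_j$ in its interior, with $|u_0'|\ge\eta$ there. Then I will argue inductively along the partition that:
\begin{itemize}
\item $u(\cdot,\alpha)$ exists on $[0,R]$ for $\alpha$ near $\alpha_0$, with no double zero there;
\item $u$ depends continuously on $\alpha$ in $C^1$;
\item $u$ is differentiable in $\alpha$, with derivative solving \eqref{eq:v-equation}.
\end{itemize}

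\textbf{Regular segments.} On segments where $|u_0|\ge\eta$ (including the start at $r=0$, handled via the integral formulation $u'(r)=-r^{1-n}\int_0^r s^{n-1}f(u)\,ds$), $f$ is $C^1$ near the range of $u_0$. Standard differentiable dependence on initial data (at $r=0$, on the parameter) applies directly. It yields $v$ of class $C^2$ there, solving the linearized equation with data transported from the previous node $s_{i-1}$.

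\textbf{Segments with a simple zero.} This is the main obstacle. By $C^1$ continuity at the left node, for $\alpha$ close to $\alpha_0$ the solution $u_\alpha$ also satisfies $|u_\alpha'|\ge\eta/2$ on the segment, so it has exactly one simple zero $z(\alpha)$ there. Existence and uniqueness up to the right node then follow from Lemma \ref{lem24} and continuation. For differentiability, I will write the difference quotient $w_h=(u_{\alpha+h}-u_\alpha)/h$. It satisfies $(r^{n-1}w_h')'=r^{n-1}a_h w_h$ with
\[
a_h=-\int_0^1 f'\bigl(u_\alpha+\theta(u_{\alpha+h}-u_\alpha)\bigr)\,d\theta .
\]
By (C3'), $|f'(s)|\le C|s|^{q-2}$ near $0$, and on the segment $|u_\alpha+\theta(\cdots)|$ is comparable to $|r-\tilde z_\theta|$ for some point $\tilde z_\theta$ within $O(h)$ of $z(\alpha)$. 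Hence $a_h$ is bounded in $L^1$ uniformly. Moreover $a_h\to -f'(u_\alpha)$ in $L^1$ as $h\to0$: there is pointwise convergence off $z(\alpha)$, and uniform integrability comes from the bound $C|r-\tilde z|^{q-2}$ with $q-2>-1$ and $\tilde z$ ranging over a small interval. I then apply Proposition~\ref{lem:singular-linear}(ii) at the left node $s_{i-1}$, where $|u|$ is bounded away from $0$, so the coefficient is continuous away from the singular point as the proposition requires (a slight adaptation is needed since the singular point is not $s_0$; otherwise I re-center at $z(\alpha)$ and match data). The initial data $(w_h(s_{i-1}),w_h'(s_{i-1}))$ converge by the induction hypothesis, so $w_h\to v$ in $C^1$ of the segment. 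Combining the same $L^1$ estimate with continuity of the data in $\alpha$ gives joint continuity of $(r,\alpha)\mapsto v$ and $\partial_r v$, hence $C^1$ of $(r,\alpha)\mapsto u$. The resulting $v$ lies in $C^1$ and is $C^2$ away from $z(\alpha)$ by Proposition~\ref{lem:singular-linear}(i).

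\textbf{Conclusion.} Gluing the segments gives $v\in C^1([0,z_u))\cap C^2([0,z_u)\setminus\mathcal Z)$ solving \eqref{eq:v-equation} with $v(0)=1$, $v'(0)=0$. Finally, suppose $v(r_0)=v'(r_0)=0$ for some $r_0\in(0,z_u)$. The coefficient $f'(u)$ is $L^1$ near $r_0$ by the estimate above, and continuous off $\mathcal Z$. The uniqueness in Proposition~\ref{lem:singular-linear}(i), applied with centre at $r_0$ (or at a nearby regular point, propagating through zeros of $u$), forces $v\equiv0$ on the whole interval, contradicting $v(0)=1$. So $v$ has no double zeros.
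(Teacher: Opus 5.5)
Your proof is correct. It uses the same engine as the paper: the difference quotient $w_h$ solves a linear equation whose averaged coefficient converges in $L^1$, and Proposition~\ref{lem:singular-linear}(ii) upgrades this to $C^1$ convergence $w_h\to v$. Where you differ is the decomposition.

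The paper works on all of $[0,R]$ at once. It obtains $u_{\alpha+h}\to u_\alpha$ in $C^1([0,R])$ from the Volterra form \eqref{eq:u-integral} by Arzel\`a--Ascoli plus uniqueness, proves the $L^1$-continuity \eqref{eq:fprime-continuity} globally, and then applies Proposition~\ref{lem:singular-linear}(ii) once, from $s_0=0$ with data $(1,0)$. No data ever need to be transported.

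You instead partition $[0,R]$, use classical differentiable dependence on the regular pieces, and confine the $L^1$ argument to one simple zero at a time. The price is an induction over the nodes. You also need continuous dependence on the initial data for the linear problem, which Proposition~\ref{lem:singular-linear}(ii) does not literally give, since it fixes $(b,c)$; it follows at once from linearity and Gronwall.

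In return, your version is more explicit at two points the paper passes over quickly:
\begin{enumerate}
\item The lower bound $|(1-\theta)u_\alpha+\theta u_{\alpha+h}|\gtrsim|r-\tilde z_\theta|$ gives uniform integrability of the averaged coefficient itself. The paper states \eqref{eq:fprime-continuity} only for $f'(\tilde u)$ and then applies it to $A_h$.
\item You flag that Proposition~\ref{lem:singular-linear} must be used with the singularity away from $s_0$. The paper needs this too, since $f'(u)$ is singular at interior points of $\mathcal Z$, and it is harmless because the proof of that proposition only uses $a\in L^1$.
\end{enumerate}

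The one thin spot in your sketch is the claim that $|u_\alpha'|\ge\eta/2$ on a whole singular segment follows from closeness of the data at the left node. This requires $C^1$ convergence on the segment. You get it from compactness plus uniqueness of the limit problem (the limit solution has only a simple zero there), which is exactly the argument the paper runs on $[0,R]$.

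A minor point: a priori you only know $\tilde z_\theta\to z(\alpha)$, not that $\tilde z_\theta$ lies within $O(h)$ of $z(\alpha)$. That is all your uniform-integrability argument uses, so nothing breaks.
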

\begin{proof}
	By Proposition \ref{pro2.8} (ii), $\mathcal{Z}$ is a finite set.
Fix  any $R \in (0, z_u)\setminus \mathcal{Z}$. 
Since all zeros of $u$ in $[0, R]$ are simple, the same arguments as in Lemma \ref{lem2.4} show that $f'(u) \in L^1([0, R])$. 
By Proposition \ref{lem:singular-linear} (i), the variational equation \eqref{eq:v-equation} admits a unique solution $v \in C^1([0,R]) \cap C^2([0,R] \setminus \mathcal{Z})$. Since $R < z_u$ can be arbitrary close to $z_u$, $v$ is well-defined on the entire interval $[0, z_u)$. By uniqueness, $v$ admits only simple zeros in 
$[0, z_u)$.
We claim that 
\begin{equation}\label{eq:fprime-continuity}
\|f'(\tilde{u}) - f'(u)\|_{L^1([0, R])} \to 0 \quad \text{as} \quad \|\tilde{u} - u\|_{C^1([0, R])} \to 0.
\end{equation}
Let $c = \min_{z \in \mathcal{Z}} |u'(z)| > 0$. For any sufficiently small $\varepsilon > 0$, there exists $\delta > 0$ such that if $\|\tilde{u} - u\|_{C^1([0, R])} < \delta$, then $|\tilde{u}'(r)| \ge c/2$ on the $\varepsilon$-neighborhood $\mathcal{Z}^\varepsilon$ of $\mathcal{Z}$, and $\tilde{u}$ possesses exactly one zero in each component of $\mathcal{Z}^\varepsilon$. 
In this singular region $\mathcal{Z}^\varepsilon$, the $L^1$-difference is bounded by the integrability of the singularity, yielding \[
\|f'(\tilde{u}) - f'(u)\|_{L^1(\mathcal{Z}^\varepsilon)} \leq \|f'(\tilde{u})\|_{L^1(\mathcal{Z}^\varepsilon)} +\| f'(u)\|_{L^1(\mathcal{Z}^\varepsilon)} \le C \varepsilon^{q-1}.
\]
Conversely, in the regular region $[0, R] \setminus \mathcal{Z}^\varepsilon$, $u$ and $\tilde u$ are bounded away from zero, ensuring that $f'(\tilde{u}) \to f'(u)$ uniformly. 
Since $\varepsilon > 0$ is arbitrary, the $L^1$-convergence on $[0, R]$ follows immediately.

We now prove that the map $\alpha \mapsto u_\alpha := u(\cdot, \alpha)$ is differentiable with respect to $\alpha$, with derivative $v = \partial_\alpha u_\alpha$. 

Proposition \ref{pro2.5} (iii)--(iv) guarantees that $\{u_{\alpha+h}\}$ is uniformly bounded in $C([0,R])$ for small $h$. Expressing $u_{\alpha+h}$ via the equivalent integral equation
\begin{equation}\label{eq:u-integral}
u_{\alpha+h}(r) = \alpha+h - \int_0^r K(r,s) f(u_{\alpha+h}(s)) \, ds,
\end{equation}
where $K(r,s) = s^{n-1} \int_s^r t^{1-n} \, dt$ and $\partial_r K(r,s) = (s/r)^{n-1}$, we observe that the uniform boundedness of $\partial_r K$ yields a uniform $C^1([0,R])$ bound for $\{u_{\alpha+h}\}$ via the relation $u_{\alpha+h}'(r) = - \int_0^r \partial_r K(r,s) f(u_{\alpha+h}(s)) \, ds$.

By the Arzel\`{a}-Ascoli theorem, any sequence $h_n \to 0$ has a subsequence converging uniformly to some $\bar{u} \in C([0,R])$. Passing to the limit in \eqref{eq:u-integral} identifies $\bar{u} = u_\alpha$ by uniqueness, ensuring $u_{\alpha+h} \to u_\alpha$ in $C([0,R])$. Differentiating \eqref{eq:u-integral} then gives $u_{\alpha+h}'(r) = - \int_0^r \partial_r K(r,s) f(u_{\alpha+h}(s)) \, ds$. The uniform convergence of $u_{\alpha+h}$, the continuity of $f$, and the boundedness of $\partial_r K$ immediately yield the uniform convergence of the derivatives, establishing $u_{\alpha+h} \to u_\alpha$ in $C^1([0,R])$.

For small $h \neq 0$, we define the difference quotient
\[
w_h(r) = \frac{u_{\alpha+h}(r) - u_\alpha(r)}{h}.
\]
This function satisfies the initial value problem
\begin{equation*} 
\bigl(r^{n-1}w_h'(r)\bigr)' + r^{n-1}A_h(r)w_h(r) = 0, \qquad w_h(0) = 1, \quad w_h'(0) = 0,
\end{equation*}
where the averaged coefficient is given by
\[
A_h(r) := \frac{f(u_{\alpha+h}(r)) - f(u_\alpha(r))}{u_{\alpha+h}(r) - u_\alpha(r)} = \int_0^1 f'\bigl((1-t)u_\alpha(r) + t u_{\alpha+h}(r)\bigr) \, dt,
\]
where $A_h$ is defined by the above quotient whenever 
$u_{\alpha+h}(r)\ne u_\alpha(r)$, and by the integral formula
otherwise.
Since $\|u_{\alpha+h} - u_\alpha\|_{C^1([0,R])} \to 0$ as $h \to 0$, the $L^1$-continuity established in \eqref{eq:fprime-continuity} implies that $A_h \to f'(u_\alpha)$ in $L^1([0,R])$. Consequently, the continuous dependence result in Proposition \ref{lem:singular-linear} (ii) guarantees that $w_h \to v$ in $C^1([0,R])$ as $h \to 0$. This convergence proves that $u$ is differentiable with respect to $\alpha$, with $\partial_\alpha u = v$.

Finally, we establish the joint $C^1$-regularity of $u$ with respect to the variables $(r, \alpha)$. The convergence $u_{\alpha+h} \to u_\alpha$ in $C^1([0,R])$ as $h \to 0$ implies that the map $\alpha \mapsto u(\cdot, \alpha)$ is continuous from $\mathbb{R}$ into $C^1([0,R])$. This directly yields the joint continuity of both $u$ and $\partial_r u$ with respect to $(r, \alpha)$. 
Similarly, 
\eqref{eq:fprime-continuity} and Proposition \ref{lem:singular-linear} (ii)
 implies that the map $\alpha \mapsto v= \partial_\alpha u(\cdot, \alpha)$ is continuous into $C^1([0,R])$, ensuring the joint continuity of $\partial_\alpha u$. Therefore, $u(r, \alpha)$ is continuously differentiable   with respect to $(r, \alpha)$.
\end{proof}

\begin{lemma}\label{limitv}
	 Under the assumptions of Lemma \ref{lem-global-v}, let $u$ be a ground state or a bound state solution. If there exists $d \in (0, z_u)$ such that $v(r) \neq 0$ for all $r \in (d, z_u)$, then $v$ is eventually monotone as $r\uparrow z_u$. Furthermore, if $v$ remains bounded as $r\uparrow z_u$, then 
\[
  v'(r) = O\bigl((z_u - r)^{-1}\bigr) \quad \text{as } r\uparrow z_u.
\]
\end{lemma}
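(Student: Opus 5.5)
Near $z_u$ the solution $u$ has one sign; by symmetry ($u\mapsto -u$, $v\mapsto -v$) we take $u>0$ and $u'<0$ on $(c_k,z_u)$, or on $(0,z_u)$ for a ground state. By Proposition~\ref{pro2.6}(iv), $u(r)\sim C_q(z_u-r)^{2/(2-q)}$. Since $u\neq 0$ on $(d,z_u)$, the linearized equation is classical there. By (C3') and the asymptotics,
\[
f'(u(r))=-(q-1)u^{q-2}(1+o(1))=-\frac{(q-1)C_q^{q-2}}{(z_u-r)^2}(1+o(1)),
\]
and this is strongly negative. We also take $v>0$ on $(d,z_u)$ (replacing $v$ by $-v$ if needed).

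\emph{Eventual monotonicity.} From \eqref{eq:v-equation}, $(r^{n-1}v')'=-r^{n-1}f'(u)v>0$ on a left neighbourhood $(d',z_u)$ of $z_u$, because $f'(u)<0$ for $0<u<1$ close to zero and $v>0$. So $r^{n-1}v'$ is strictly increasing there, and it can change sign at most once. Hence $v'$ has a fixed sign on some $(d'',z_u)$, and $v$ is eventually monotone. This step only uses the sign of $f'(u)$ near $u=0$.

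\emph{Derivative bound when $v$ is bounded.} Assume $0<v\le M$ near $z_u$. By monotonicity, $r^{n-1}v'$ has a limit $L\in(-\infty,+\infty]$ as $r\uparrow z_u$. We need to show that $|v'|\le C(z_u-r)^{-1}$.

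First, if $v'$ is eventually negative, then $r^{n-1}v'$ is negative and increasing, so $v'$ is bounded and the estimate is trivial.

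Otherwise $v'$ is eventually positive and increasing (up to the factor $r^{n-1}$), so $v$ is convex-like. Fix $r$ near $z_u$ and set $\rho=z_u-r$. Because $r^{n-1}v'$ is increasing, for $s\in[r,r+\rho/2]$ we have $v'(s)\ge (r/s)^{n-1}v'(r)\ge c\,v'(r)$. Integrating,
\[
M\ge v(r+\rho/2)-v(r)\ge c\,\frac{\rho}{2}\,v'(r),
\]
so $v'(r)\le 2M/(c\rho)=O((z_u-r)^{-1})$.

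The main obstacle I expect is checking that this elementary convexity argument really covers every case. In particular, the case where $v'$ changes sign once must reduce to the eventual-sign cases, which holds because monotonicity of $r^{n-1}v'$ allows at most one sign change. It also needs boundedness of $v$ from above, and that is exactly the hypothesis. If a finer bound were required, I would instead compare with the Euler-type equation $w''=\frac{\kappa}{(z_u-r)^2}w$, $\kappa=(q-1)C_q^{q-2}$, whose solutions are powers $(z_u-r)^{\lambda}$. This would show that bounded solutions behave like $(z_u-r)^{\lambda_+}$ and unbounded ones like $(z_u-r)^{\lambda_-}$, but the statement as given only needs the convexity estimate above.
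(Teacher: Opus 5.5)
Your proof is correct. The eventual-monotonicity step is the same as the paper's: $f'(u)<0$ near $z_u$ by (C3'), so $r^{n-1}v'$ is strictly monotone and changes sign at most once.

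For the derivative bound you take a different route.

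\emph{The paper's route.} It does not split into cases. It combines the compact-support asymptotics of Proposition~\ref{pro2.6}(iv) with (C3') to get the pointwise estimate $|(r^{n-1}v')'|=|r^{n-1}f'(u)v|\le C(z_u-r)^{-2}$. Integrating this from a fixed $r_0$ to $r$ gives the rate $(z_u-r)^{-1}$ directly, as a consequence of the blow-up $f'(u(r))\sim -(q-1)C_q^{q-2}(z_u-r)^{-2}$. That is exactly the expansion you wrote at the start but never actually use.

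\emph{Your route.} You use only the sign of $f'(u)$. If $v'<0$ eventually, then $r^{n-1}v'$ is negative and increasing, so $v'$ is bounded. If $v'>0$ eventually, then monotonicity of $r^{n-1}v'$ plus $0<v\le M$, applied on $[r,r+\rho/2]$, gives $v'(r)\le 2M/(c\rho)$.

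\emph{Comparison.} Your argument is more elementary and more robust. It applies to any bounded positive solution of $(r^{n-1}v')'=r^{n-1}a(r)v$ with $a>0$, however fast $a$ blows up. It is also sharper in the decreasing case, where you get a bounded $v'$. The paper's argument is shorter once Proposition~\ref{pro2.6}(iv) is available, and it needs no case distinction.

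\emph{A bookkeeping point.} The hypothesis gives $v\neq0$ on $(d,z_u)$, not $u\neq0$ there. Before you use the classical form of the linearized equation and the sign of $f'(u)$, shrink the interval so that it lies to the right of the last zero of $u$ (for instance beyond $c_k$ in the bound-state case).
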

\begin{proof}
Since $v$ has a constant sign in $(d, z_u)$ and $u(r) \to 0$ as $r\uparrow z_u$,
conditions (C1') and (C3') imply that $f'(u(r)) < 0$ for $r$
sufficiently close to $z_u$ (using the oddness of $f$ when $u(r)<0$).
 The variational equation 
\[
(r^{n-1}v')' = -r^{n-1}f'(u)v
\]
then implies that $r^{n-1}v'$ is strictly monotone near $z_u$. Consequently, $r^{n-1}v'$ can have at most one zero in this region. Thus, there exists $r_0 \in (d, z_u)$ such that $v'$ has a constant sign on $(r_0, z_u)$, which confirms that $v$ is eventually monotone.

Furthermore, since $v$ is bounded near $z_u$, Proposition \ref{pro2.6} (iv) combined with condition (C3') ensures that there exists a constant $C > 0$ such that
\[
  |(r^{n-1}v')'| = |r^{n-1}f'(u)v| \le C(z_u - r)^{-2} \quad \text{for } r \in (r_0, z_u).
\]
Integrating this inequality from $r_0$ to $r \in (r_0, z_u)$ yields 
\[
|r^{n-1}v'(r)| = O\bigl((z_u - r)^{-1}\bigr) \quad \text{as } r\uparrow z_u.
\]
Since $z_u > 0$, the term $r^{n-1}$ is bounded away from zero near $z_u$. Therefore, we immediately obtain the desired estimate.
\end{proof}
\begin{lemma}\label{lem:tendency-sublinear}
	Suppose that (C1'), (C2), (C3'),  (C4) and (C9) hold with $n\ge2$.
	Let \(u=u(r,\alpha)\) be a ground state or a nodal solution of equation \eqref{eq2.1}.
	Then the following assertions hold.
	
	\begin{enumerate}
		\item[(i)] Let \(r_1\) be the first point such that \(u(r_1)=1\). Then \(v\)
		changes sign in \((0,r_1)\).
		
		\item[(ii)] Suppose that \(u\) is a nodal solution with   \(k\ge2\) simple zeros before \(z_u\). For \(2\le i\le k\), let \(r_i\in(c_{i-1},z_i)\) and
		\(\bar r_{i-1}\in(z_{i-1},c_{i-1})\) be defined by
		\[
		|u(r_i)|=|\;u(\bar r_{i-1})\;|=1 .
		\]
		Then \(v\) changes sign in \((\bar r_{i-1},r_i)\).Moreover, if
	$
	v(c_{i-1})v'(c_{i-1})<0,
	$
	then \(v\) changes sign in \((c_{i-1},r_i)\).
	\end{enumerate}
	
\end{lemma}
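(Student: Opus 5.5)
The plan is to use a Sturm comparison between $v$ and an explicit solution of a comparison equation, as in \cite[Lemma~3.1]{Tang2025}. The key point is that every interval involved lies in a region where $|u|>1$. There $f$ is $C^1$, indeed $C^2$, so the singularity at $u=0$ plays no role and $v$ is $C^2$.

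For (i), on $(0,r_1)$ we have $u>1$ and $u'<0$. I will work with $w:=u-1$, which solves
\[
(r^{n-1}w')'+r^{n-1}\frac{f(u)}{u-1}\,w=0 .
\]
By (C9), $f$ is convex on $(1,\infty)$ with $f(1)=0$. Hence $f(u)/(u-1)\le f'(u)$ for $u>1$, with strict inequality because $f''>0$. Now suppose $v>0$ on $(0,r_1)$. Recall $v(0)=1$ and $v'(0)=0$, while $w(0)=\alpha-1>0$ and $w'(0)=0$. Consider the Wronskian-type quantity $W:=r^{n-1}(v'w-vw')$. It satisfies
\[
W'=r^{n-1}vw\Bigl(\frac{f(u)}{u-1}-f'(u)\Bigr)<0 \quad\text{on }(0,r_1),
\]
and $W(0)=0$, so $W(r_1)<0$. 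On the other hand, at $r_1$ we have $w(r_1)=0$ and $w'(r_1)=u'(r_1)<0$. This gives $W(r_1)=-r_1^{n-1}v(r_1)u'(r_1)\ge 0$, a contradiction. I also need to rule out the degenerate case $v(r_1)=0$ with no sign change before it. Since the zeros of $v$ are simple by Lemma~\ref{lem-global-v}, a zero at $r_1$ gives $W(r_1)=0$, which again contradicts $W(r_1)<0$. Therefore $v$ has a zero in $(0,r_1)$.

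For (ii) the same comparison runs on $(\bar r_{i-1},r_i)$. Throughout this interval $|u|\ge1$ and $u$ has one sign, with the critical point $c_{i-1}$ inside. By oddness it suffices to treat $u<-1$, or to replace $u$ by $-u$. I take $w:=|u|-1$, which vanishes at both endpoints and is positive inside. It satisfies the same equation, with coefficient $f(|u|)/(|u|-1)<f'(|u|)=f'(u)$. Suppose $v$ has no zero in $(\bar r_{i-1},r_i)$, say $v>0$. Then $W=r^{n-1}(v'w-vw')$ is strictly decreasing there. At the left endpoint $w=0$ and $w'>0$, so $W=-r^{n-1}vw'<0$ there, or $W=0$ if $v$ vanishes there. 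At the right endpoint $w=0$ and $w'<0$, so $W\ge0$. This contradicts the strict decrease of $W$. If $v$ vanishes at an endpoint, simplicity of that zero gives a sign change in the interior or a direct contradiction in the same way.

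For the last statement, I run the argument on $(c_{i-1},r_i)$ instead. At $c_{i-1}$ we have $w'(c_{i-1})=0$, so $W(c_{i-1})=c_{i-1}^{n-1}v'(c_{i-1})w(c_{i-1})$. Its sign is that of $v'(c_{i-1})$, because $w>0$. If $v$ keeps one sign on the interval, say $v>0$ (replace $v$ by $-v$ otherwise), then the hypothesis $vv'<0$ forces $v'(c_{i-1})<0$. So $W(c_{i-1})<0$, $W$ decreases, and $W(r_i)\ge0$, a contradiction as before.

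The main obstacle is the bookkeeping of the boundary cases: $v$ vanishing exactly at an endpoint, and verifying the strict inequality $f(s)/(s-1)<f'(s)$. That inequality follows from the mean value theorem and (C9). For the endpoint cases, simplicity of the zeros of $v$ from Lemma~\ref{lem-global-v} disposes of the degenerate configurations.
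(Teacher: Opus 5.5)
Your proof is correct and is essentially the paper's route: the paper observes that every interval involved lies in $\{|u|\ge 1\}$, away from the singularity of $f$, and then runs the Wronskian comparison of \cite[Lemma 4.1]{Tang2025}, which is the same kind of Sturm argument you carry out. Your explicit choice of comparison function $|u|-1$, together with the chord--tangent inequality $f(s)/(s-1)<f'(s)$, makes the argument self-contained and needs only $C^1$ regularity of $f$ on $\{|u|\ge 1\}$ plus strict monotonicity of $f'$ there, whereas the paper appeals to the $C^2$ regularity of $f$ from (C9) to justify Tang's $\varrho$.
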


\begin{proof}
	Since the analysis is restricted to intervals where $|u| \ge 1$, the non-Lipschitz singularity of $f$ at $u=0$   is entirely avoided. In these regions, by (C9) $f$ is $C^2$, and consequently, the variation $v$ is $C^3$. Therefore, the Wronskian $\varrho$ defined in \cite[Lemma 4.1]{Tang2025} is well-defined and $C^1$. The remainder of the proof proceeds exactly as in \cite[Lemma 4.1]{Tang2025}.
\end{proof}

We shall use the following auxiliary functions as in \cite{Tang2025}:
\begin{align}
	&Q(r)
	:=
	r^n\bigl[u'v'+f(u)v\bigr]
	+(n-2)r^{n-1}u'v, \qquad 
		M(r)
	:=	r^{n-1}(u'v-uv'), \label{eq:def QM}\\
	&T_1(r)
	:=
	Q(r)-g_1(u(r))M(r), \qquad  g_1(u):=	\frac{2f(u)}{uf'(u)-f(u)}.\label{eq:def T1g1}
\end{align}

A direct calculation gives
\begin{equation}\label{AF'}
    Q'(r) = 2r^{n-1}f(u)v, \quad M'(r) = r^{n-1}\bigl[uf'(u)-f(u)\bigr]v, \quad T_1'(r) = -g_1'(u)u'M(r) \quad (u \neq 0).
\end{equation}
Applying the limit theorem for derivatives, we conclude that $Q, M \in C^1([0, z_u))$, while $T_1$ is of class $C^1$ if $u\neq 0$.
 
We also define the “bridging function” as in \cite{Tang2025}, for \(a\in\mathbb R\), wherever \(u'\neq0\),
\begin{equation}\label{eq:Ba-sublinear}
	B_a(r):=
	Q(r)-aM(r)-2F_a(u(r))\frac{r^{n-1}v(r)}{u'(r)},
\end{equation}
where 
\begin{equation} \label{eq:Fa-sublinear}
	F_a(u):=F(u)-\frac a2\bigl[uf(u)-2F(u)\bigr].
\end{equation}
Then
\begin{equation}\label{eq:Baprime-sublinear}
	B_a'(r)=
	-2F_a(u(r))\frac{Q_n(r)}{r u'(r)^2},
\end{equation}
where  for an integral $i$
\[
Q_i(r)=Q(r)+i r^{n-1}u'v .
\]

\begin{lemma}
	\label{lem:no-second-zero-before-z1-sublinear}
	Assume that (C1'), (C2), (C3'), (C4)--(C6), and (C9) hold with $n\geq 2$. In addition, suppose that $u f'(u) - f(u)>0$, $g_1'(u) > 0$ for $u > 0$.
	\begin{enumerate}
		\item[(i)] If \(u\) is a nodal solution with first zero \(z_1<z_u\), then there exists
		\(\tau_1\in(0,r_1)\) such that
		\[
		v(r)>0\quad\hbox{in }(0,\tau_1),\qquad
		v(\tau_1)=0,\qquad
		v(r)<0\quad\hbox{in }(\tau_1,z_1].
		\]
		Moreover,
		\begin{equation*}
			Q(r),\,M(r)>0,\quad r\in(0,z_1];\qquad
			T_1(r),\,T_1'(r)>0,\quad r\in(0,z_1).
		\end{equation*}
		
		\item[(ii)] If \(u\) is a ground state with support radius \(z_u\), then there exists
		\(\tau_1\in(0,r_1)\) such that
		\[
		v(r)>0 \hbox{ in }(0,\tau_1),\qquad
		v(\tau_1)=0,\quad
		v(r)<0 \hbox{ in }(\tau_1,z_u),\quad \lim_{r\uparrow z_u}v(r)=-\infty.
		\]
		In addition,
		\[
		Q(r),\,M(r)>0,\qquad T_1(r),\,T_1'(r)>0,
		\qquad r\in(0,z_u).
		\]
	\end{enumerate}
\end{lemma}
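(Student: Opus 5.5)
We follow the scheme of Tang's Phase~1 analysis, working with the identities \eqref{AF'}. Near $r=0$ we have $v>0$ since $v(0)=1$, and Lemma~\ref{lem:tendency-sublinear}(i) shows that $v$ changes sign in $(0,r_1)$. Let $\tau_1\in(0,r_1)$ be the first zero of $v$; it is simple by Lemma~\ref{lem-global-v}.

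\textbf{Positivity of $Q$ and $M$ before $\tau_1$.} Both functions vanish at $r=0$. On $(0,r_1)$ we have $u>1$, so $f(u)>0$, and the hypothesis gives $uf'(u)-f(u)>0$. Hence on $(0,\tau_1]$ we get $Q'=2r^{n-1}f(u)v>0$ and $M'>0$, so $Q,M>0$ on $(0,\tau_1]$.

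\textbf{Behaviour of $T_1$.} Next consider $T_1=Q-g_1(u)M$. As $r\to0$ both $Q$ and $M$ are $O(r^n)$, so $T_1(0^+)=0$. We have $T_1'=-g_1'(u)u'M$, with $g_1'>0$ by hypothesis and $u'<0$ on $(0,z_1)$ by Proposition~\ref{pro2.6}. So as long as $M>0$ we get $T_1'>0$, and therefore $T_1>0$.

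\textbf{$v$ has no second zero in $(\tau_1,z_1]$.} Suppose $\tau_2\in(\tau_1,z_1]$ were the next zero of $v$, and consider $M$ on $[\tau_1,\tau_2]$. At $\tau_2$ we have $M(\tau_2)=-\tau_2^{n-1}u(\tau_2)v'(\tau_2)$. Here $v<0$ on $(\tau_1,\tau_2)$, so $v'(\tau_2)\ge0$, and simplicity of the zero gives $v'(\tau_2)>0$; with $u(\tau_2)\ge0$ this yields $M(\tau_2)\le0$. If $u(\tau_2)=0$, i.e.\ $\tau_2=z_1$, use $Q$ instead: $Q(z_1)=z_1^nu'v'$ has the sign of $-v'<0$.

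To get a contradiction we need $M$ (or $Q$) to stay positive up to $\tau_2$. Note that $M'<0$ only where $v<0$ and $uf'-f>0$, so $M$ can decrease after $\tau_1$ and monotonicity is not enough. Instead, argue with $T_1$ on the subinterval where $M>0$. Let $\hat r$ be the first zero of $M$ after $\tau_1$. On $(0,\hat r)$ we have $T_1$ increasing, so at $\hat r$ we get $Q(\hat r)=T_1(\hat r)>0$.

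Then use $Q$ and $M$ together via the relation
\[
vQ-\dots,
\]
or more concretely eliminate $v'$: from $M=r^{n-1}(u'v-uv')$ we have $v'=(u'v-r^{1-n}M)/u$. Substituting into $Q$ gives
\[
uQ=r^n\bigl[u'^2+uf\bigr]v+(n-2)r^{n-1}uu'v-r\,u'M=\frac{P_1\,v}{r^{0}}\cdot r^{0}-ru'M,
\]
that is, $uQ=P_1v-ru'M$, with $P_1$ from \eqref{P1}. Since $P_1>0$ by Proposition~\ref{pro2.9}(ii) or Proposition~\ref{pro2.9-planar}(ii), we have $v<0$, and $M(\hat r)=0$, this gives $u(\hat r)Q(\hat r)<0$. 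This contradicts $Q(\hat r)>0$. Hence $M>0$ on $(0,z_1)$, so $T_1$ is increasing and positive there.

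Now return to $\tau_2$. If $\tau_2<z_1$ then $M(\tau_2)\le0$ contradicts $M>0$. If $\tau_2=z_1$, then $Q(z_1)<0$, while $uQ=P_1v-ru'M$ with $u'<0$ and $M>0$ gives $Q>0$ right before $z_1$ unless $P_1v$ dominates. To close this case, use $Q'=2r^{n-1}fv>0$ on $(r_1,z_1)$, where $f<0$ and $v<0$: this shows $Q$ is increasing there. Also $Q(r_1)=T_1(r_1)+g_1(1)M$ with $g_1(1)=0$, so $Q(r_1)=T_1(r_1)>0$. Therefore $Q>0$ on $[r_1,z_1]$, contradicting $Q(z_1)<0$. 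The same argument gives $Q,M>0$ on $(0,z_1]$, with $M(z_1)=z_1^{n-1}u'v>0$.

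\textbf{Ground state, part (ii).} The same argument works on $(0,z_u)$: we have $u>0$ and $u'<0$, and $Q$ increases on $(r_1,z_u)$. For the limit, $v<0$ on $(\tau_1,z_u)$ is eventually monotone by Lemma~\ref{limitv}. If $v$ stayed bounded, then $v'=O((z_u-r)^{-1})$ and $M=r^{n-1}(u'v-uv')\to0$, since $u'\to0$ and $u\,v'=O((z_u-r)^{2/(2-q)-1})\to0$. But $M$ is increasing and positive on $(r_1,z_u)$, because there $uf'-f<0$ and $v<0$ give $M'>0$. So $M$ cannot tend to $0$, a contradiction. Hence $|v|\to\infty$, and since $v<0$, $v\to-\infty$.

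The main obstacle is the sign bookkeeping for $M$ after $\tau_1$. The key step is the identity $uQ=P_1v-ru'M$ at the first zero of $M$, which relies on $P_1>0$.
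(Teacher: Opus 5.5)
Your part (i) is essentially the paper's argument. The identity $uQ=P_1v-ru'M$, evaluated at the first zero $\hat r$ of $M$ together with $Q(\hat r)=T_1(\hat r)>0$, is exactly the paper's computation $P_1(\tau_{1m})=Q(\tau_{1m})\,u(\tau_{1m})/v(\tau_{1m})<0$. Your exclusion of $v(z_1)=0$, using $Q'>0$ on $(r_1,z_1)$ and $Q(r_1)=T_1(r_1)>0$, is also the paper's. You should state explicitly why $Q>0$ on $(\tau_1,r_1)$: there $Q'=2r^{n-1}f(u)v<0$, so $Q(r)>Q(r_1)>0$.

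The genuine gap is the final step of (ii). You claim $M'>0$ on $(r_1,z_u)$ because $uf'(u)-f(u)<0$ there. But the lemma assumes $uf'(u)-f(u)>0$ for all $u>0$; for the model nonlinearity, $uf'-f=(2-q)u^{q-1}$. You also noted this yourself earlier when you said $M$ can decrease after $\tau_1$. Since $v<0$, this gives $M'=r^{n-1}(uf'-f)v<0$ on all of $(\tau_1,z_u)$. So $M$ is positive and decreasing. Your correct observation that bounded $v$ forces $M(r)\to0$ therefore yields no contradiction: a positive decreasing function may tend to $0$.

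The fix is to run the same argument with $Q$, as the paper does. You already observed that $Q$ is increasing on $(r_1,z_u)$ with $Q(r_1)=T_1(r_1)>0$. If $v$ stayed bounded, then $f(u)v\to0$ and $u'v\to0$. Also, by Lemma~\ref{limitv} and Proposition~\ref{pro2.6}(iv), $|u'|=O\bigl((z_u-r)^{\frac{q}{2-q}}\bigr)$ and $v'=O\bigl((z_u-r)^{-1}\bigr)$, so
\[
u'v'=O\bigl((z_u-r)^{\frac{2q-2}{2-q}}\bigr)\to0,
\]
where $q>1$ is used. Hence $Q(r)\to0$, contradicting $Q\ge Q(r_1)>0$. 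Since $v$ is eventually monotone, it is then unbounded with $v<0$, so $v\to-\infty$.
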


\begin{proof}
	We set
	\[
	R_*=
	\begin{cases}
		z_1, & \hbox{if }u\hbox{ is a nodal solution},\\
		z_u, & \hbox{if }u\hbox{ is a ground state}.
	\end{cases}
	\]
	 By Lemma \ref{lem-global-v} and \ref{lem:tendency-sublinear},
	there exists \(\tau_1\in(0,r_1)\) such that
	\[
	v>0\quad\hbox{in }(0,\tau_1),
	\qquad
	v(\tau_1)=0,
	\qquad
	v'(\tau_1)<0.
	\]
 
Recall \eqref{eq:def QM}, \eqref{eq:def T1g1}, and \eqref{AF'} that
\[
Q'(r)=2r^{n-1}f(u)v,
\qquad
M'(r)=r^{n-1}\bigl[uf'(u)-f(u)\bigr]v,
\]
and
\[
T_1'(r)=-g_1'(u)u'M(r),
\qquad
g_1(u)=\frac{2f(u)}{uf'(u)-f(u)} .
\]

On \((0,\tau_1)\), we have
\[
u>1,\qquad u'<0,\qquad f(u)>0,\qquad
uf'(u)-f(u)>0,\qquad v>0.
\]
Therefore
$
Q'(r)>0,\ M'(r)>0,\ r\in(0,\tau_1).
$
Since \(Q(0)=M(0)=T_1(0)=0\), it follows that
\[
Q(r)>0,\qquad M(r)>0,\qquad r\in(0,\tau_1].
\]
Using \(g_1'(u)>0\), \(u'<0\), and \(M>0\), we also get
\[
T_1'(r)>0,\qquad T_1(r)>0,\qquad r\in(0,\tau_1].
\]

We first show that if $M>0$ in $(0,\tau)$ for some $\tau\in(\tau_1,R_*)$, then 
$v<0$ in $(\tau_1,\tau]$. Otherwise,  if   \(v\) had a first
zero \(\sigma\in(\tau_1,\tau]\), then \(v'(\sigma)>0\) by uniqueness, and hence
\[
 M(\sigma)
=
\sigma^{n-1}\bigl(u'(\sigma)v(\sigma)-u(\sigma)v'(\sigma)\bigr)
=
-\sigma^{n-1}u(\sigma)v'(\sigma)<0,
\]
because \(u(\sigma)>0\). This contradicts \(M(\sigma)\geq 0\).

We now prove that \(M>0\) in $(\tau_1,R_*)$ and hence $v<0$ in this interval.
 Suppose, to the contrary,
that \(M\) has a first zero \(\tau_{1m}\in(\tau_1,R_*)\), namely
\[
M(r)>0\quad\hbox{in }(0,\tau_{1m}),
\qquad
M(\tau_{1m})=0 .
\]
Then \(v<0\) in \((\tau_1,\tau_{1m}]\), and  $T_1'(r)=-g_1'(u)u'M(r)>0$ in $(0,\tau_{1m})$, combining which with \eqref{eq:def T1g1} and $T_1(\tau_1)>0$, we obtain that
\begin{equation}\label{eq3.16}
	Q(\tau_{1m})=T_1(\tau_{1m})+g_1(u(\tau_{1m}))M(\tau_{1m})=T_1(\tau_{1m})>0.
\end{equation} 
At \(r=\tau_{1m}\), the equality \(M(\tau_{1m})=0\) gives
$
u'v=uv' .
$
Since \(u(\tau_{1m})>0\), \(u'(\tau_{1m})<0\), and
\(v(\tau_{1m})<0\), we obtain
\[
v'(\tau_{1m})>0,
\qquad
\frac{u(\tau_{1m})}{v(\tau_{1m})}<0 .
\]
Using \(u'v=uv'\) at $\tau_{1m}$,  \eqref{P1}, and \eqref{eq3.16}, we get
\[
\begin{aligned}
	P_1(\tau_{1m})
	&=
	\tau_{1m}^{n}
	\bigl[u'(\tau_{1m})^2
	+u(\tau_{1m})f(u(\tau_{1m}))\bigr]
	+(n-2)\tau_{1m}^{n-1}u(\tau_{1m})u'(\tau_{1m})        \\
	&=
	\Big(
	\tau_{1m}^{n}
	\bigl[u'(\tau_{1m})v'(\tau_{1m})
	+f(u(\tau_{1m}))v(\tau_{1m})\bigr]
	+(n-2)\tau_{1m}^{n-1}u'(\tau_{1m})v(\tau_{1m})
	\Big)
	\frac{u(\tau_{1m})}{v(\tau_{1m})}                     \\
	&=
	Q(\tau_{1m})\frac{u(\tau_{1m})}{v(\tau_{1m})}<0.
\end{aligned}
\]
This contradicts Proposition \ref{pro2.9} and Proposition \ref{pro2.9-planar}, which give \(P_1>0\). Therefore
\[
M(r)>0,\qquad r\in(0,R_*),
\]
and  
\[
v(r)<0,\qquad r\in(\tau_1,R_*).
\]

Since \(M>0\) in \((0,R_*)\), we get by $T_1'(r)=-g_1'(u)u'M(r)$ that
\[
T_1'(r)>0,\qquad T_1(r)>0,\qquad r\in(0,R_*).
\]
At \(r=r_1\), \(u(r_1)=1\), and hence \(g_1(u(r_1))=g_1(1)=0\). Therefore
\[
Q(r_1)=T_1(r_1)+g_1(u(r_1))M(r_1)=T_1(r_1)>0.
\]
Moreover, by $Q'(r)=2r^{n-1}f(u)v$, we know that \(Q\) increases on \((0,\tau_1)\), decreases on \((\tau_1,r_1)\), but
remains positive at \(r_1\). Since \(f(u)<0\) and \(v<0\) on \((r_1,R_*)\), we have
\begin{equation*} 
Q'(r)=2r^{n-1}f(u)v>0,\qquad r\in(r_1,R_*).
\end{equation*}
Consequently,
\[
Q(r)>0,\qquad r\in(0,R_*).
\]
If \(u\) is a nodal solution, then \(R_*=z_1\). We have already proved
\[
v<0,\qquad Q>0,\qquad M>0
\quad\hbox{in }(0,z_1).
\]  Since  
\(z_1\) is simple, so \(u'(z_1)<0\), and \(v,v'\) are finite at \(z_1\). If
\(v(z_1)=0\), then, because \(v<0\) in \((\tau_1,z_1)\), we would have
\(v'(z_1) >0\). Hence
\[
Q(z_1)
=
z_1^n u'(z_1)v'(z_1)<0,
\]
contradicting the positivity of \(\lim_{r\uparrow z_1}Q(r)\). Therefore
\(v(z_1)<0\). It follows that
\[
M(z_1)=z_1^{n-1}u'(z_1)v(z_1)>0,
\]
and also \(Q(z_1)>0\). This proves part (i).

To complete the proof of (ii),
it remains to prove that $\lim_{r\uparrow z_u} v(r) = -\infty$ when $u$ is the ground state. Since $v$ is monotone by Lemma \ref{limitv}, it suffices to show that $v$ is unbounded. Assume to the contrary that $v$ is bounded near $z_u$. 
By Lemma \ref{limitv}, $v'(r)=O((z_u-r)^{-1})$.
Combining this with Proposition \ref{pro2.6} (iv) yields 
\[
u'v' = O\left((z_u - r)^{\frac{2q-2}{2-q}}\right) \to 0 \quad \text{as } r\uparrow z_u.
\]
Moreover, $u'v\to0$ and $f(u)v\to0$.
Hence, by the definition of $Q$, we have $Q(r)\to0$.
This contradicts $Q(r)\ge Q(r_1)>0$ ...
which implies $Q(r) \to 0$. This directly contradicts the property that $Q > 0$ and is strictly increasing on $(r_1, z_u)$. Hence, $v$ must diverge to $-\infty$ as $r\uparrow z_u$.

\end{proof}

\begin{remark}
For the nonlinearity $f(u) = u - |u|^{q-2}u$ with $1 < q < 2$, a direct computation yields
\[
u f'(u) - f(u) = (2-q) u |u|^{q-2} \quad \text{and} \quad g_1(u) = \frac{2}{2-q} \bigl(|u|^{2-q} - 1\bigr).
\]
Consequently, the following
assumptions in Lemma \ref{lem:no-second-zero-before-z1-sublinear} are   satisfied:
\[
u f'(u) - f(u) = (2-q) u^{q-1} > 0 \quad \text{and} \quad g_1'(u) = 2 u^{1-q} > 0\text{ for }  u > 0.
\] 
\end{remark}

\begin{lemma}	\label{lem:3.6}
	Under the assumption of Lemma \ref{lem:no-second-zero-before-z1-sublinear},
	let \(u\) be either a
	nodal solution with at least two zeros before \(z_u\), or a bound state. Let
	\(z_1\) be the first zero of \(u\), and let \(c_1\) be the unique critical point
	of \(u\) after \(z_1\) in Phase \(1\). Then there exists \(\tau_1\in(0,r_1)\)
	such that
	\[
	v>0\quad\hbox{in }(0,\tau_1),\qquad
	v(\tau_1)=0,\qquad
	v<0\quad\hbox{in }(\tau_1,c_1].
	\]
	Moreover,
	\[
	Q(r)>0\quad\hbox{for }r\in(0,z_1]\cup[\bar b_1,c_1],\qquad{and}\quad 
	M(r)>0,\quad Q_1(r)>0,\quad Q_2(r)>0
	\quad\hbox{for }r\in(0,c_1].
	\]
 
\end{lemma}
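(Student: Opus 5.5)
The plan is to continue the analysis of Lemma~\ref{lem:no-second-zero-before-z1-sublinear}(i) past $z_1$ by a first-failure-point (continuity) argument on $(z_1,c_1]$. From that lemma we already have $\tau_1\in(0,r_1)$ with $v>0$ on $(0,\tau_1)$ and $v<0$ on $(\tau_1,z_1]$, together with $Q,M>0$ on $(0,z_1]$. Since $u'v>0$ on $(\tau_1,z_1]$, this also gives $Q_1,Q_2>0$ on $(\tau_1,z_1]$. On $(0,\tau_1]$ we have $Q_i=Q+ir^{n-1}u'v$ with $u'<0<v$, so there I would instead check $Q_i'>0$ directly, starting from $Q_i(0)=0$. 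Using $(r^{n-1}u')'=-r^{n-1}f(u)$, one computes
$$Q_1'=r^{n-1}\bigl(f(u)v+u'v'\bigr),\qquad Q_2'=2r^{n-1}u'v'.$$
On $(0,\tau_1)$ we have $v'<0$ once $v$ starts decreasing, and $f(u)>0<v$, which gives the sign; if necessary I would fall back to the argument of Tang~\cite{Tang2025}.

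On the new interval $(z_1,c_1)$ we have $u<0$ and $u'<0$. Since $f$ is odd, $uf'(u)-f(u)$ is odd, hence negative there. Consequently, as long as $v<0$,
$$M'=r^{n-1}\bigl(uf'(u)-f(u)\bigr)v>0,$$
so $M$ stays positive, and $r^{n-1}u'v>0$ gives $Q_2>Q_1>Q$ wherever $v<0$.

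Suppose $v$ had a first zero $\sigma\in(z_1,c_1]$. By Lemma~\ref{lem-global-v}, $v'(\sigma)>0$, so
$$Q(\sigma)=Q_1(\sigma)=Q_2(\sigma)=\sigma^n u'(\sigma)v'(\sigma)\le 0.$$
Hence it suffices to show that $Q_2$ (or at least $Q_1$) stays positive on $(z_1,\sigma)$. On $[\bar b_1,c_1]$ this is easy once $Q(\bar b_1)>0$. Indeed, there $f(u)<0$ and $v<0$, so $Q'=2r^{n-1}f(u)v>0$ and $Q$ increases. This yields the claim $Q>0$ on $[\bar b_1,c_1]$, and it also excludes $\sigma\in[\bar b_1,c_1]$.

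To reach $\bar b_1$, I would use the bridging function \eqref{eq:Ba-sublinear} with $a=0$:
$$B_0=Q-2F(u)\frac{r^{n-1}v}{u'},\qquad B_0'=-2F(u)\frac{Q_n}{ru'^2}.$$
At $z_1$ we have $F(u)=0$, so $B_0(z_1)=Q(z_1)>0$, and at $\bar b_1$ likewise $B_0(\bar b_1)=Q(\bar b_1)$. On $(z_1,\bar b_1)$ we have $F(u)<0$, so $B_0$ increases provided $Q_n>0$ there. Since $Q_n\ge Q_2$ for $n\ge2$ whenever $u'v>0$, positivity of $Q_2$ on $(z_1,\bar b_1)$ would give $Q(\bar b_1)>Q(z_1)>0$. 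The argument thus reduces to positivity of $Q_2$ (and then $Q_1$) on $(z_1,\min\{\sigma,\bar b_1\})$.

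I expect this last step to be the main obstacle. My first attempt would mimic the $P_1$ trick of Lemma~\ref{lem:no-second-zero-before-z1-sublinear}: take a first zero $\rho$ of $Q_1$ (respectively $Q_2$). At $\rho$, use $M(\rho)>0$, i.e. $u'v>uv'$, together with $Q_1(\rho)=0$ to eliminate $v'$, and express the result as a multiple of a Pohozaev-type quantity ($P_1$, $P$, or $P_2$; for $n=2$, $P=2r^2E$). The sign of that quantity is known from Proposition~\ref{pro2.9} or Proposition~\ref{pro2.9-planar}, and this should produce the contradiction. If this fails, I would split $(z_1,\bar b_1)$ at $\bar r_1$. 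On $(\bar r_1,\bar b_1)$ we have $f(u)<0$, so $Q_1'\ge r^{n-1}u'v'$, which is positive while $v'<0$. On $(z_1,\bar r_1)$ I would control $v'$ through $M>0$, which reads $v'>u'v/u>0$ there, and track $Q_2'=2r^{n-1}u'v'$ against the margin $2r^{n-1}u'v$.

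Finally, once $v<0$ on $(\tau_1,c_1]$ is established, the bounds $M>0$, $Q_1>0$, $Q_2>0$ on all of $(0,c_1]$ follow from the monotonicity computations above. At $c_1$ itself, $Q(c_1)=c_1^nf(u(c_1))v(c_1)>0$ gives the endpoint case.
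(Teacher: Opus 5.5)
Your skeleton is sound. Keeping $M'>0$ on $(z_1,c_1)$ while $v<0$, excluding a zero $\sigma$ of $v$ via $Q_1(\sigma)=\sigma^n u'(\sigma)v'(\sigma)<0$, transporting $Q$ from $z_1$ to $\bar b_1$ with $B_0$, and closing on $[\bar b_1,c_1]$ with $Q'=2r^{n-1}f(u)v>0$ are all correct. The paper itself simply defers to Lemma 4.3 of Tang, after checking $C^1$ regularity across $z_1$.

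However, the step you flag as the main obstacle is not proved, and it is the heart of the lemma. That step is positivity of $Q_1$ on $(z_1,\min\{\sigma,\bar b_1\})$. Neither of your routes works.

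\textbf{First route.} Eliminating $v'$ through $M$ gives the exact identity $Q_1u=(P_1+r^{n-1}uu')v-ru'M$. At a zero $\rho$ of $Q_1$ you only learn $(P_1+\rho^{n-1}uu')v=\rho u'M<0$, i.e.\ $P_1+\rho^{n-1}uu'>0$. This already holds on $(z_1,c_1)$, because $P_1>0$ and $uu'>0$ there. So there is no contradiction.

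\textbf{Fallback.} It contains a sign error. On $(z_1,\bar r_1)$ one has $u,u',v<0$, so $u'v/u<0$. Thus $M>0$ only gives the negative lower bound $v'>u'v/u$. Nothing controls the sign of $v'$ on $(z_1,\bar b_1)$, so neither $Q_2'=2r^{n-1}u'v'$ nor $Q_1'$ has a known sign.

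\textbf{Missing idea.} Compare $B_0$ with $Q_1$ through the energy:
\[
B_0=Q_1-2r^{n-1}\frac{v}{u'}\Bigl(\frac12u'^2+F(u)\Bigr)=Q_1-2r^{n-1}\frac{v}{u'}E .
\]
Let $\rho\in(z_1,\bar b_1]$ be the first point where $Q_1$ or $v$ vanishes. The case $v(\rho)=0$ is excluded by your own argument, so $Q_1(\rho)=0$ and $v<0$ on $(z_1,\rho]$.

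On $(z_1,\rho)$ you have $u'v>0$ and $Q_1>0$, hence $Q_n=Q_1+(n-1)r^{n-1}u'v>0$. Also $F(u)<0$ there, so $B_0$ is increasing and $B_0(\rho)>B_0(z_1)=Q(z_1)>0$. But the identity gives $B_0(\rho)=-2\rho^{n-1}\frac{v}{u'}E(\rho)<0$. This holds because $v/u'>0$, and $E>0$ on $(0,c_1]$ by Proposition~\ref{pro2.8}~(iii). That is exactly where the hypothesis that $u$ has a second zero or is a bound state is used. The contradiction shows no such $\rho$ exists.

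This is the same mechanism as Step 3 in the proof of Lemma~\ref{lem:phase-transition-sublinear}. With $Q_1>0$ and $v<0$ on $(z_1,\bar b_1]$, you get $Q(\bar b_1)=B_0(\bar b_1)>Q(z_1)>0$. The rest of your argument then goes through, including $Q_2=Q_1+r^{n-1}u'v>Q_1$.
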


\begin{proof}
The proof follows the same argument as in \cite[Lemma 4.3]{Tang2025}. The analysis is restricted to the interval $[0, c_1]$, which contains exactly one zero of $u$ (namely, its first simple zero $z_1$). At $z_1$, the solution $u$ is of class $C^2$, while the variation $v$ is well-defined and belongs to $C^1([0, c_1]) \cap C^2([0, c_1] \setminus \{z_1\})$. Consequently, the auxiliary functions $Q_i$ ($i=0,1,2,n$) and $M$ employed in the original proof are of class $C^1([0, c_1])$, with $B_0$ being $C^1$ on $(0, c_1)$. This regularity, together with the other properties utilized in \cite{Tang2025}, is entirely sufficient to carry out the argument, ensuring that the modified conditions (C1') and (C3') have no impact on this step.
\end{proof}

\section{Transition to later phases}

In this section, we transition from Phase 1 to the subsequent phases. We restrict our analysis to equation \eqref{eq2.1} with
\[
f(u) = u - |u|^{q-2}u, \quad F(u) = \frac{1}{2}u^2 - \frac{1}{q}|u|^q, \qquad 1 < q < 2.
\]
In contrast to \cite[Section 5]{Tang2025}, we assume $n \ge 2$. Throughout this section, we shall repeatedly use the auxiliary functions $Q_i$, $M$, and $P$ introduced in the previous sections.
\subsection{The phase transition lemma}

Our main auxiliary function in the later phases is
\begin{equation}\label{eq:def-T2-g2}
	T_2(r):=Q(r)-g_2(u(r))M(r),
	\qquad
	g_2(u):=\frac{2F(u)}{uf(u)-2F(u)},\quad u\neq 0 .
\end{equation}
For the sublinear nonlinearity, since
\[
uf(u)-2F(u)=\frac{2-q}{q}|u|^q,
\]
we may write, for \(u\neq0\),
\begin{equation}\label{eq:g2-explicit-sublinear} 
	g_2(u)=\frac{q|u|^{2-q}-2}{2-q}.
\end{equation}
In particular, \(g_2\) has a finite continuous extension at \(u=0\), namely
\(g_2(0)=-2/(2-q)\).

Recall also that
\[
T_1(r):=Q(r)-g_1(u(r))M(r),
\qquad
g_1(u):=\frac{2f(u)}{uf'(u)-f(u)} .
\]
Since
\[
uf'(u)-f(u)=(2-q)u|u|^{q-2},
\]
we have
\[
g_1(u)=\frac{2}{2-q}\bigl(|u|^{2-q}-1\bigr).
\]
Consequently,
\begin{equation}\label{eq:g1-g2-relation-sublinear}
	g_1(u)=g_2(u)+|u|^{2-q},
\end{equation}
and hence
\begin{equation}\label{eq:T2-T1-relation-sublinear}
	T_2(r)=T_1(r)+|u(r)|^{2-q}M(r).
\end{equation}

We shall also use the following connection identity. Let
\[
\omega(r):=-\frac{ru'(r)}{u(r)}
\]
whenever \(u(r)\neq0\), and define
\begin{equation}\label{eq:theta-sublinear}
	\theta(r):=\frac{r^{n-1}v}{u'}[uf(u)-2F(u)]=\frac{2-q}{q}\,
	\frac{r^{n-1}v(r)}{u'(r)}\,|u(r)|^q
\end{equation}
whenever \(u'(r)\neq0\). Then, on every interval where \(u u'\neq0\),
\begin{equation}\label{eq:connection-identity-sublinear}
	Q(r)-P(r)\frac{v(r)}{u(r)}
	=
	\omega(r)\bigl[M(r)-\theta(r)\bigr].
\end{equation}
Indeed, using
\[
M=r^{n-1}(u'v-uv')
\]
and the definitions of \(P\) and \(Q\), we compute
\[
\begin{aligned}
	Q-P\frac vu
	&=
	r^n\left(u'v'+f(u)v-\frac vu\bigl(u'^2+2F(u)\bigr)\right)  \\
	&=
	r^n\left(u'v'-\frac{u'^2v}{u}\right)
	+r^n\frac{uf(u)-2F(u)}{u}v                                  \\
	&=
	\omega M
	-\omega\left(\frac{r^{n-1}v}{u'}[uf(u)-2F(u)]\right)          \\
	&=
	\omega(M-\theta).
\end{aligned}
 \]

A direct differentiation of \(T_2\) gives
\begin{equation}\label{eq:T2prime-first-sublinear}
	T_2'(r)
	=
	(2-q)r^{n-1}u(r)v(r)
	-q\,\frac{u(r)u'(r)}{|u(r)|^q}M(r),
\end{equation}
or, equivalently,
\begin{equation}\label{eq:T2prime-second-sublinear}
	T_2'(r)
	=
	-q\,\frac{u(r)u'(r)}{|u(r)|^q}
	\bigl[M(r)-\theta(r)\bigr],
\end{equation}
on intervals where \(u\neq0\). To verify \eqref{eq:T2prime-first-sublinear}, note that
\[
T_2'=Q'-g_2'(u)u'M-g_2(u)M',
\]
where
\[
Q'=2r^{n-1}f(u)v,\qquad
M'=r^{n-1}\bigl[uf'(u)-f(u)\bigr]v.
\]
Moreover,
\[
g_2'(u)=q\,u|u|^{-q},
\]
and
\[
\begin{aligned}
	2f(u)-g_2(u)\bigl[uf'(u)-f(u)\bigr]
	&=
	2\bigl(u-u|u|^{q-2}\bigr)
	-\bigl(q|u|^{2-q}-2\bigr)u|u|^{q-2} \\
	&=
	(2-q)u .
\end{aligned}
\]
This proves \eqref{eq:T2prime-first-sublinear}; \eqref{eq:T2prime-second-sublinear}
then follows from the definition of \(\theta\).

We now state the phase transition lemma, whose proof will be completed after the
auxiliary results in this section.

\begin{lemma}[Phase transition lemma]\label{lem:phase-transition-sublinear}
	Let \(n\ge2\), and let \(u\) be a nodal solution of \eqref{eq2.1}. Suppose that \(u\) has exactly \(k\)
	zeros before \(z_u\) if \(u\) is a bound state, and has exactly \(k+1\) zeros before
	\(z_u=+\infty\) if \(u\) is not a bound state. Then
	\[
	Q(c_1)>0,\qquad M(c_1)>0,\qquad T_2(c_1)>0.
	\]
	Furthermore, the following statements hold.
	
	\begin{enumerate}
		\item[(i)] If \(i\in\{2,\ldots,k\}\) and
		\[
		Q(c_{i-1})>0,\qquad M(c_{i-1})>0,\qquad T_2(c_{i-1})>0,
		\]
		then \(v\) has a unique zero \(\tau_i\) in \([c_{i-1},c_i]\), with
		\[
		\tau_i\in(c_{i-1},r_i),
		\]
		and
		\[
		Q(c_i)>0,\qquad M(c_i)>0,\qquad T_2(c_i)>0.
		\]
		
		\item[(ii)] If \(u\) is not a bound state and
		\[
		Q(c_k)>0,\qquad M(c_k)>0,\qquad T_2(c_k)>0,
		\]
		then \(v\) has a unique zero \(\tau_{k+1}\) in \([c_k,z_{k+1}]\), with
		\[
		\tau_{k+1}\in(c_k,r_{k+1}).
		\]
		
		\item[(iii)] If \(u\) is a bound state and
		\[
		Q(c_k)>0,\qquad M(c_k)>0,\qquad T_2(c_k)>0,
		\]
		then \(v\) has a unique zero \(\tau_{k+1}\) in \([c_k,z_u)\), with
		$
		\tau_{k+1}\in(c_k,r_{k+1}).
		$
		Moreover, \(v\) is eventually monotone as \(r\uparrow z_u\), and 
		$
		\lim_{r\uparrow z_u}|v(r)|=\infty .
		$
	\end{enumerate}
\end{lemma}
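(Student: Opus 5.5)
By the symmetry $u\mapsto -u$ (which preserves the equation since $f$ is odd, and leaves $v$, $Q$, $M$, $T_2$, $\theta$ unchanged up to the appropriate sign bookkeeping), I would reduce every phase to the configuration of Phase 1: $u>0$ decreasing from $c_{i-1}$ to $z_i$, then $u<0$ decreasing to $c_i$. The base claim $Q(c_1)>0$, $M(c_1)>0$ is read off from Lemma~\ref{lem:3.6}. For $T_2(c_1)>0$ I would use $T_2=Q-g_2(u)M$ together with the explicit value $g_2(0)=-2/(2-q)$. This gives
\[
T_2(z_1)=Q(z_1)+\tfrac{2}{2-q}M(z_1)>0 .
\]
I would then propagate positivity on $[z_1,c_1]$ by the derivative formula \eqref{eq:T2prime-second-sublinear}, whose sign is governed by $M-\theta$. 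The needed sign of $M-\theta$ is to be obtained from the connection identity \eqref{eq:connection-identity-sublinear}, using $P>0$ from Propositions~\ref{pro2.9} and~\ref{pro2.9-planar}, the positivity of $Q_1,Q_2$ on $(0,c_1]$ from Lemma~\ref{lem:3.6}, and the bridging function $B_a$ from \eqref{eq:Ba-sublinear} with a suitable $a$. Near $c_1$, where $u'\to0$ and $\theta$ blows up, I would instead compare $T_2$ directly, using $Q(c_1)=c_1^nf(u)v$, $M(c_1)=-c_1^{n-1}u v'$ and $u(c_1)<-\alpha_*$.

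For the induction step (i), the hypotheses at the critical point $c_{i-1}$ (where $u'=0$, $|u|>\alpha_*$, $f(u)u>0$) translate into sign information on $v$: $Q(c_{i-1})=c_{i-1}^nf(u)v>0$ and $M(c_{i-1})=-c_{i-1}^{n-1}uv'>0$ give $v(c_{i-1})v'(c_{i-1})<0$. Lemma~\ref{lem:tendency-sublinear}(ii) then produces a zero $\tau_i\in(c_{i-1},r_i)$. Uniqueness of the zero would follow the Phase 1 scheme of Lemma~\ref{lem:no-second-zero-before-z1-sublinear}. As long as $M>0$, a second zero $\sigma$ with $u(\sigma)\ne0$ forces $M(\sigma)=-\sigma^{n-1}u v'(\sigma)<0$, which is impossible. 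A first zero of $M$ is excluded because there $Q=T_2$, provided $T_2>0$ has been maintained, and then $P_1=Q\,u/v$ has the wrong sign, contradicting $P_1>0$. At $z_i$, a vanishing $v$ is ruled out exactly as in Phase 1 via $Q(z_i)=z_i^nu'v'$. Thus the whole induction hinges on keeping $T_2>0$ through the phase, on both $[c_{i-1},z_i]$ and $[z_i,c_i]$, with the only regularity issue at $z_i$ handled by the continuity of $g_2$ at $0$ and $Q,M\in C^1$.

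The main obstacle is this propagation of $T_2>0$, i.e.\ controlling the sign of $M-\theta$ on the subintervals where $uu'\ne0$ and $v$ has already changed sign. There the identity $\omega(M-\theta)=Q-Pv/u$ has competing terms. I would first try to integrate $T_2'$ in the form \eqref{eq:T2prime-first-sublinear} and dominate the term $(2-q)r^{n-1}uv$ by the integral estimate announced as Lemma~\ref{lem:residual-integral-alln-sublinear}, using the monotonicity of $P/r^n$ (respectively of $E$ when $n=2$). For $n=2$, where $Q_n=Q_2$, the positivity of $B_a'$ in \eqref{eq:Baprime-sublinear} would come from the inductive positivity of $Q_2$.

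Parts (ii) and (iii) use the same argument on $[c_k,z_{k+1}]$ and $[c_k,z_u)$ respectively, without needing to reach a further critical point. In (iii), eventual monotonicity of $v$ is Lemma~\ref{limitv}. If $v$ stayed bounded, then $v'=O((z_u-r)^{-1})$, and Proposition~\ref{pro2.6}(iv) gives $Q(r)\to0$. This contradicts the facts that $Q>0$ at $r_{k+1}$ (since there $Q=T_1$, as $g_1(\pm1)=0$, with $T_1>0$ by \eqref{eq:T2-T1-relation-sublinear}-type comparison) and that $Q'=2r^{n-1}f(u)v>0$ beyond $r_{k+1}$. Hence $|v|\to\infty$.
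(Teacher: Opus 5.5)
There is a genuine gap. You correctly identify the positivity of $T_2$ as the crux, but you give no argument for it, and the route you sketch cannot work.

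Propagating from $T_2(z_1)=Q(z_1)+\frac{2}{2-q}M(z_1)>0$ to the right fails immediately. Just after $z_i$ one has $uu'>0$, $\theta\to0$ and $M(z_i)>0$, so by \eqref{eq:T2prime-second-sublinear} $T_2'=-q\,uu'|u|^{-q}(M-\theta)\to-\infty$ as $r\downarrow z_i$. Thus $T_2$ genuinely decreases after $z_i$, and the sign of $M-\theta$ there is the wrong one. Evaluating directly at $c_1$ does not help either: $T_2(c_1)=Q(c_1)-g_2(u(c_1))M(c_1)$ with $g_2(u(c_1))>0$ is a difference of two positive quantities. Lemma~\ref{lem:residual-integral-alln-sublinear} is also not a device for dominating the term $(2-q)r^{n-1}uv$ in $T_2'$.

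The missing idea is the one in Lemma~\ref{lem:T2-positive-sublinear}, which is used also for $i=1$:
\begin{itemize}
\item From $T_2=B_0+\frac{2F(u)}{(2-q)uu'}T_2'$ and $Q_n>0$ on $[\bar b_i,c_i]$, every critical point of $T_2$ in $(\bar b_i,c_i)$ is a strict local minimum, so there is at most one, $t_i$.
\item Its value is compared with $T_2(s_i)>0$, where $s_i\in(c_{i-1},b_i)$ satisfies $|u(s_i)|=|u(t_i)|=U$. The comparison uses $B_{a_i}$ with $a_i=g_2(U)$, which equals $T_2$ at both $s_i$ and $t_i$ and satisfies $B_{a_i}'=(U^{2-q}|u|^q-u^2)\,Q_n/(ru'^2)$.
\item For $n=2$, positivity of the resulting integral is immediate from $Q_2>0$. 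For $n\ge3$ it needs the splitting at $b_i,\bar b_i$, Lemma~\ref{lem:residual-integral-alln-sublinear} on $[b_i,\tau_i]$, and the pairing of $r_{i\mu}$ with $\bar r_{i\mu}$ through Lemma~\ref{lem:comparison-sublinear}, Proposition~\ref{pro2.5}(ii) and the monotonicity of $P/r^n$.
\end{itemize}
This argument presupposes $T_2>0$ on $[c_{i-1},b_i]$ (Step 1 of the paper, via the connection identity and $g_2\ge0$ where $F\ge0$), $Q_2>0$ on the whole phase, and $Q(\bar b_i)>Q(b_i)$. Your plan establishes none of these.

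The uniqueness of $\tau_i$ is also not covered. Your $M$-argument only rules out a second zero before $z_i$. For $\sigma\in(z_i,c_i]$, $u$ has already changed sign, so $M(\sigma)=-\sigma^{n-1}u(\sigma)v'(\sigma)>0$ and there is no contradiction. Even on $(\tau_i,z_i)$ the argument requires $T_2>0$ at the first zero of $M$. You do not establish this on $(b_i,z_i)$, where $g_2<0$ and the Step 1 argument no longer applies.

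The paper proceeds differently. It shows $Q_1>0$ on $[\tau_i,\bar b_i]$: at a first zero $\tilde q_i$ of $Q_1$ one has $B_0(\tilde q_i)=-2\tilde q_i^{\,n-1}\frac{v}{u'}E(\tilde q_i)<0$ by Proposition~\ref{pro2.8}(iii), while $B_0$ increases from a positive value because $F<0$ and $Q_n>0$. Hence a second zero, where $Q_1=r^nu'v'<0$, is impossible. On $(\bar b_i,c_i]$ a zero is excluded because $Q$ increases from $Q(\bar b_i)>0$, while $Q=r^nu'v'\le0$ at such a zero.

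Finally, in (iii) your claim $Q(r_{k+1})=T_1(r_{k+1})>0$ is unjustified. Since $T_1=T_2-|u|^{2-q}M\le T_2$ when $M>0$, the comparison goes the wrong way, and unlike Phase 1 there is no starting value $T_1(0)=0$. The paper instead uses $B_0$: it tends to $0$ if $v$ stays bounded, yet it is increasing on $(\max\{b_{k+1},\tau_{k+1}\},z_u)$ and equals $Q>0$ at the left endpoint.
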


\subsection{Locating the zeros of \(v(r)\) and keeping a record of signs}

The first step toward the phase transition lemma is to locate the first zero of
\(v\) in each later phase and to record the signs of all quantities needed later.
If $u$ is a bound state, we denote $z_u=z_{k+1}$.

\begin{lemma}\label{lem:locating-v-zero-sublinear}
	Let the assumptions in Lemma \ref{lem:no-second-zero-before-z1-sublinear} hold.
	Suppose that \(n\ge2\) and
	$
	Q(c_{i-1})>0,\ M(c_{i-1})>0
	$
	for some \(i\in\{2,\ldots,k,k+1\}\). Then \(v\) changes sign in
	\((c_{i-1},r_i)\). Let \(\tau_i\) be the first zero of \(v\) in
	\((c_{i-1},r_i)\). Then, for all $r\in(c_{i-1},\tau_i)$,
	\begin{equation}\label{eq:sign-record-later-phase}
		uf(u)>0,\qquad f'(u)>0,\qquad uv>0,\qquad u'v'>0,
	\end{equation}
	and
	\begin{equation}\label{eq:sign-record-later-phase-2}
		uu'<0,\qquad uv'<0,\qquad u'v<0,\qquad vv'<0.
	\end{equation}	
	In addition, \eqref{eq:sign-record-later-phase}--\eqref{eq:sign-record-later-phase-2}
	  hold for \(i=1\), with \(c_0=0\) and \(\tau_1\) being the unique zero of \(v\)
	in Phase \(1\).
\end{lemma}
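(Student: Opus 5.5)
The plan is to read off the signs of $u,v,v'$ at the critical point $c_{i-1}$ from the two hypotheses, and then feed them into Lemma~\ref{lem:tendency-sublinear}(ii). At $c_{i-1}$ we have $u'(c_{i-1})=0$, so the definitions \eqref{eq:def QM} reduce to
\[
Q(c_{i-1})=c_{i-1}^{\,n}f(u(c_{i-1}))v(c_{i-1}),\qquad M(c_{i-1})=-c_{i-1}^{\,n-1}u(c_{i-1})v'(c_{i-1}).
\]
By Proposition~\ref{pro2.6}(iii), $|u(c_{i-1})|>\alpha_*>1$, so $f(u(c_{i-1}))$ has the sign of $u(c_{i-1})$. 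Therefore $Q(c_{i-1})>0$ gives $u v>0$ at $c_{i-1}$, and $M(c_{i-1})>0$ gives $uv'<0$ there. Combining the two, $v(c_{i-1})v'(c_{i-1})<0$, which is exactly the extra hypothesis in Lemma~\ref{lem:tendency-sublinear}(ii). That lemma then yields a sign change of $v$ in $(c_{i-1},r_i)$.

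The case $i=k+1$ needs a separate remark, since Lemma~\ref{lem:tendency-sublinear}(ii) is stated only for $2\le i\le k$. Here I would note that its proof, a Wronskian comparison borrowed from \cite[Lemma 4.1]{Tang2025}, only uses the region $|u|\ge1$ on $(c_k,r_{k+1})$. In that region $f$ is smooth, so the argument carries over verbatim. This holds whether $z_{k+1}$ is a simple zero or the support radius $z_u$, since $r_{k+1}<z_{k+1}$ in both cases. This extension is the only point I expect to need care.

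Next I take $\tau_i$ to be the first zero of $v$ in $(c_{i-1},r_i)$ and record signs on $(c_{i-1},\tau_i)$. On $(c_{i-1},r_i)$ the function $u$ moves monotonically from $u(c_{i-1})$ toward $\pm1$ with $|u|>1$. This gives:
\begin{itemize}
\item $uu'<0$, since $c_{i-1}$ is the only critical point in the phase;
\item $uf(u)>0$;
\item $f'(u)=1-(q-1)|u|^{q-2}>0$.
\end{itemize}
By the choice of $\tau_i$, $v$ keeps the sign of $u$ on $(c_{i-1},\tau_i)$, so $uv>0$. For $v'$, the equation \eqref{eq:v-equation} reads $(r^{n-1}v')'=-r^{n-1}f'(u)v$. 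The right side has the sign of $-u$, so $r^{n-1}v'$ moves strictly in the direction of $-\operatorname{sign}(u)$. Starting from $uv'<0$ at $c_{i-1}$, we get $uv'<0$ throughout $(c_{i-1},\tau_i]$. The remaining relations $u'v'>0$, $u'v<0$ and $vv'<0$ follow by multiplying these sign relations together.

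For $i=1$, I take $c_0=0$ and let $\tau_1\in(0,r_1)$ be the unique zero of $v$ in Phase 1 given by Lemma~\ref{lem:3.6} (or Lemma~\ref{lem:no-second-zero-before-z1-sublinear}). Then $v(0)=1$, $v'(0)=0$, and on $(0,\tau_1)$ we have $u>1$, $u'<0$, $f'(u)>0$ and $v>0$. Hence $(r^{n-1}v')'<0$, which gives $v'<0$ on $(0,\tau_1]$, and the same sign bookkeeping as above applies.
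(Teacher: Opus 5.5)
Your proposal is correct and takes essentially the route the paper uses when it defers to \cite[Lemma 5.2]{Tang2025}: you read $uv>0$ and $uv'<0$ at $c_{i-1}$ off $Q(c_{i-1})>0$ and $M(c_{i-1})>0$, invoke the ``moreover'' clause of Lemma~\ref{lem:tendency-sublinear}(ii), and propagate $uv'<0$ through the monotonicity of $r^{n-1}v'$. You rightly flag that the tendency lemma must be extended to $i=k+1$, including the bound-state case; this is legitimate because its Wronskian argument only uses the region $\{|u|\ge1\}$. The only cosmetic difference is that you get $f'(u)>0$ for $|u|>1$ from the explicit formula, whereas it also follows from (C4) and (C9), which is how the paper's abstract formulation states it.
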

\begin{proof}
	The proof is the same as that of \cite[Lemma 5.2]{Tang2025}. It depends exclusively on the abstract conditions (C1'), (C2), (C3'), (C4), and (C9), making it independent of the specific formula for $f$. Additionally, the interval under consideration does not contain any zeros of $u$.
\end{proof}

\subsection{Positivity of an integral relative to \(Q_n\)}

For \(i\ge 1\), recall that
\[
Q_n(r)=Q(r)+n r^{n-1}u'(r)v(r).
\]

We introduce an auxiliary functional
	\[
	A(r):=-\frac{r^{n-1}v(r)}{u'(r)}.
	\]
 
	When $vu'\neq 0$, taking the logarithmic derivative  yields
\[
	\frac{A'}{A} = (\log |A|)' = \frac{n-1}{r} + \frac{v'}{v} - \frac{u''}{u'}.
\]
Using the governing equation $u'' = -\frac{n-1}{r}u' - f(u)$, we substitute $\frac{u''}{u'} = -\frac{n-1}{r} - \frac{f(u)}{u'}$ to obtain
\begin{equation}\label{eq4.13}
	\frac{A'}{A} = \frac{2(n-1) -S(r)}{r},\quad S(r):=- r\frac{v'}{v} - r\frac{f(u)}{u'}.
\end{equation}
By substituting $u'v' + f(u)v = -\frac{S(r)}{r} u'v$ into the definition of $Q_i(r) = r^n[u'v' + f(u)v] + (n-2+i)r^{n-1}u'v$, we obtain
\begin{equation}\label{Qi_S}
Q_i(r) = r^{n-1} u'v \Big[ (n - 2 + i) - S(r) \Big].
\end{equation}
 Therefore,
\begin{equation*} 
	 A'
	=
	-\frac{Q_n}{r u'^2}.
	\end{equation*}

\begin{lemma}
	\label{lem:residual-integral-alln-sublinear}
Under the assumptions of Lemma \ref{lem:locating-v-zero-sublinear}, let $\tau_i \in (c_{i-1}, r_i)$ be the first zero of $v$. If $\tau_i > b_i$, then for every $\widetilde{u} \ge |u(b_i)| = \alpha_*$, one has
\begin{equation}\label{eq:unified_integral}
  B_a(\tau_i)-B_a(b_i)=  \int_{b_i}^{\tau_i} -2F_{a}(u(r)) \frac{Q_n(r)}{r u'(r)^2} \, dr > 0,
\end{equation}
where $a=g_2(\widetilde u)\geq 0$.  
 
\end{lemma}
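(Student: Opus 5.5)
The identity in \eqref{eq:unified_integral} is the fundamental theorem of calculus applied to \eqref{eq:Baprime-sublinear}. The positivity is the real content, and I plan to get it by integrating by parts against the auxiliary functional $A$.

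\textbf{The identity.} On $[b_i,\tau_i]\subset(c_{i-1},r_i)$ we have $1<|u|\le\alpha_*$, so $u\neq0$. Also $u'\neq0$ there, since $c_{i-1}$ is the only critical point preceding $z_i$ in Phase $i$. Hence $u$ avoids the singularity of $f$, all of $Q,M,v,u'$ are $C^1$, and $B_a$ is $C^1$ on this interval. Formula \eqref{eq:Baprime-sublinear} therefore holds pointwise, and integrating it from $b_i$ to $\tau_i$ gives the equality in \eqref{eq:unified_integral}.

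\textbf{Integration by parts.} Since $A'=-Q_n/(ru'^2)$, I rewrite the integral as
\[
\int_{b_i}^{\tau_i} w(r)\,\bigl(-A'(r)\bigr)\,dr,\qquad w(r):=-2F_a(u(r))=-2F(u)+a\bigl[uf(u)-2F(u)\bigr].
\]
On $(b_i,\tau_i)$ we have $F(u)<0$, $uf-2F=\frac{2-q}{q}|u|^q>0$ and $a\ge0$, so $w>0$. By Lemma~\ref{lem:locating-v-zero-sublinear}, $u'v<0$ on $(c_{i-1},\tau_i)$, so $A>0$ on $[b_i,\tau_i)$, while $A(\tau_i)=0$. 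Integrating by parts gives
\[
\int_{b_i}^{\tau_i} w\,(-A')\,dr = w(b_i)A(b_i)+\int_{b_i}^{\tau_i} w'(r)A(r)\,dr,
\]
and the boundary term equals $a\,\alpha_* f(\alpha_*)A(b_i)\ge0$. Moreover $w'=-2F_a'(u)u'$, and for the sublinear $f$ a direct computation gives
\[
F_a'(u)=\operatorname{sgn}(u)\Bigl(|u|-\tfrac q2\,\widetilde u^{\,2-q}|u|^{q-1}\Bigr).
\]

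\textbf{Case $a=0$.} When $\widetilde u=\alpha_*$ (so $a=0$), we have $F_a'=f$, and $uf>0$ together with $uu'<0$ gives $w'>0$. The integral is then strictly positive.

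\textbf{General $a$; the main obstacle.} For larger $\widetilde u$, $w$ is no longer monotone. It increases in $r$ only while $|u|^{2-q}>\frac q2\widetilde u^{\,2-q}$, that is, near $b_i$, and decreases afterwards as $|u|\downarrow1$. So the sign of $\int w'A$ is not automatic, and this is the step I expect to be hardest.

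My first attempt is to use linearity in $a$: write $I(a)=I(0)+aJ$ with
\[
J=\int_{b_i}^{\tau_i}\bigl[uf(u)-2F(u)\bigr](-A')\,dr .
\]
I would then try to show $J\ge0$ directly, or at least $I(a)>0$ once combined with the strictly positive $I(0)$. Three ingredients are available for this:
\begin{itemize}
\item the sign record \eqref{eq:sign-record-later-phase}--\eqref{eq:sign-record-later-phase-2}, which gives $S>0$ in \eqref{Qi_S};
\item the hypothesis $\tau_i>b_i$;
\item the representation $Q_n=r^{n-1}u'v\,[2n-2-S]$, used to control where $A$ is monotone.
\end{itemize}

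Concretely, I would split $[b_i,\tau_i]$ at the point $\rho$ where $F_a'(u(\rho))=0$. On $[b_i,\rho]$ the integrand $w'A$ is positive. On $[\rho,\tau_i]$ I would bound the negative contribution using that $A$ decreases to $0$ near $\tau_i$ (because $v\to0$ while $u'$ stays bounded away from $0$). If $A$ is monotone on $[\rho,\tau_i]$, a second-mean-value argument bounds this piece by $A(\rho)\bigl(w(\tau_i)-w(\rho)\bigr)$. That quantity is in turn dominated by $w(b_i)A(b_i)+\int_{b_i}^{\rho}w'A$, provided $A(\rho)\le A(b_i)$.

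Establishing this comparison of $A$ at the two points, which amounts to controlling the sign of $Q_n$ on the relevant subinterval, is the crux. I would attack it through the monotonicity of $\omega$ (Propositions~\ref{pro2.9} and \ref{pro2.9-planar}) together with the connection identity \eqref{eq:connection-identity-sublinear}.
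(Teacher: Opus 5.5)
Your proof of the identity and of the case $a=0$ is fine. Your first idea, writing $I(a)=I(0)+aJ$ with $J=\int_{b_i}^{\tau_i}[uf(u)-2F(u)](-A')\,dr$, is exactly the paper's decomposition; there $I(0)=Q(\tau_i)-Q(b_i)>0$ because $Q'=2r^{n-1}f(u)v>0$. But you never actually prove $J\ge0$. The concrete route you propose needs $A$ to be monotone on $[\rho,\tau_i]$ with $A(\rho)\le A(b_i)$. Since $A'=-Q_n/(ru'^2)$ and $Q_n=r^{n-1}u'v\,[2n-2-S]$, this is sign information on $Q_n$, i.e.\ $S>2n-2$. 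For $n\ge3$ that is not available from the hypotheses of Lemma~\ref{lem:locating-v-zero-sublinear}. Avoiding it is the whole point of this lemma, which replaces the positivity of $Q_n$ used in Tang's argument. For $n=2$ your plan works but is unnecessary, since then $Q_n=Q_2>0$ and the integrand is already pointwise positive. So the crux you isolate is left open, and as formulated it is stronger than what can be derived.

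The missing idea is to control $Au$ rather than $A$; take $i$ odd.
\begin{enumerate}
\item Since $Q_2'=2r^{n-1}u'v'>0$ on $(c_{i-1},\tau_i)$ and $Q_2(c_{i-1})=Q(c_{i-1})>0$, we get $Q_2>0$, hence $S>n$ on $(b_i,\tau_i)$. Your sign record only gives $S>0$.
\item Since $\omega'>0$ (Propositions~\ref{pro2.9} and \ref{pro2.9-planar}) and $P(b_i)=b_i^{n-1}|uu'|\bigl(\omega(b_i)-(n-2)\bigr)>0$, we get $\omega>n-2$ on $(b_i,\tau_i)$.
\item Therefore $(\log(Au))'=\bigl(2(n-1)-S-\omega\bigr)/r<0$, so $A(r)<A(b_i)\,\alpha_*/u(r)$. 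This bound lets $A$ itself grow, so it does not give your $A(\rho)\le A(b_i)$, but it suffices.
\item Integrate $J$ by parts and use $h_1'=uf'-f>0$ together with $h_1'(s)/s=f'(s)-f(s)/s$:
\[
J>A(b_i)\Bigl[h_1(\alpha_*)-\alpha_*\int_{u(\tau_i)}^{\alpha_*}\frac{h_1'(s)}{s}\,ds\Bigr]=A(b_i)\,\alpha_*\Bigl[f(u(\tau_i))+\int_{u(\tau_i)}^{\alpha_*}\frac{f(s)}{s}\,ds\Bigr]>0 .
\]
\end{enumerate}
Combined with $I(0)>0$ and $a\ge0$, this closes the argument without any sign information on $Q_n$.
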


\begin{proof}
Without loss of generality, we assume $i$ is an odd number. The case for even $i$ follows identically by the symmetry transformation $(u, v) \mapsto (-u, -v)$, which leaves the governing equations, the quantities $Q, M, Q_n$, and the integral invariant.

For an odd $i$, on the interval $[b_i, \tau_i)$, we have $u > 0$, $u' < 0$, $v > 0$, and $u(r) \in (1, \alpha_*)$. 
By the definition of $F_a(u)$,  
  the integral in \eqref{eq:unified_integral} splits into two parts:
\[
    \int_{b_i}^{\tau_i} \big( -2F_a(u) \big) \frac{Q_n}{r u'^2} \, dr = \underbrace{\int_{b_i}^{\tau_i} (-2F(u)) \frac{Q_n}{r u'^2} \, dr}_{I_1} + a \underbrace{\int_{b_i}^{\tau_i} h_1(u) \frac{Q_n}{r u'^2} \, dr}_{I_2}.
\]

\textbf{Step 1: Proof of $I_1 > 0$.}
Recall the bridging function $B_0(r) = Q(r) - 2F(u)\frac{r^{n-1}v}{u'}$ with
\(
    B_0'(r) = -2F(u) \frac{Q_n(r)}{r u'(r)^2}.
\)
Therefore,  
\[
    I_1 = B_0(\tau_i) - B_0(b_i).
\]
Since $F(u(b_i)) = F(\alpha_*) = 0$ and $v(\tau_i) = 0$, we have $B_0(b_i) = Q(b_i)$ and $B_0(\tau_i) = Q(\tau_i)$. 
On the interval $(b_i, \tau_i)$, we know $f(u) > 0$ and $v > 0$. From the identity $Q'(r) = 2r^{n-1}f(u)v$, it follows that $Q'(r) > 0$. Consequently, $Q(\tau_i) > Q(b_i)$, which yields $I_1 > 0$.

\textbf{Step 2: Proof of $I_2 > 0$.}
From the identity $Q_2(r) = r^{n-1}(-u')v\big(S(r) - n\big)$ in  \eqref{Qi_S} and the fact that $Q_2'(r) = 2r^{n-1}u'v' > 0$ on $(c_{i-1}, \tau_i)$, we know $Q_2(r)$ is strictly increasing. Since $u'(c_{i-1})=0$, we have $Q_2(c_{i-1}) = Q(c_{i-1}) > 0$. Consequently, $Q_2(r) > 0$ for all $r \in (c_{i-1}, \tau_i)$. Given that $-u' > 0$ and $v > 0$ on $(b_i, \tau_i)$, the identity immediately implies:
	\[
	S(r)>n,\quad r\in(b_i,\tau_i).
	\]
 
	Recall $$\omega=-\frac{ru'(r)}{u(r)}.$$ 
	By Proposition \ref{pro2.9} (i) (ii) if \(n\ge3\), and by Proposition
	\ref{pro2.9-planar} (i) (ii) if \(n=2\), $\omega'(r) > 0$ as long as $u(r) \neq 0$. Thus, $\omega(r)$ is strictly increasing, giving $\omega(r) > \omega(b_i)$ for $r > b_i$. Furthermore, evaluating the Pohozaev function at $b_i$ where $F(u(b_i))=0$ yields $P(b_i) = b_i^{n-1}|u(b_i)u'(b_i)|\big(\omega(b_i) - (n-2)\big) > 0$, which implies $\omega(b_i) > n-2$.  
	Therefore
	\[
	\omega(r)>\omega(b_i)>n-2
	\qquad\hbox{for }r\in(b_i,\tau_i).
	\]

	 By \eqref{eq4.13}, we have
	\[
	\frac{d}{dr}\log(Au)
	=
	\frac{A'}A+\frac{u'}u
	=
	\frac{2(n-1)-S-\omega}{r}.
	\]
	Since \(S>n\) and \(\omega>n-2\), the numerator is negative. Hence,
	\(Au\) is strictly decreasing, and
	\begin{equation}\label{eq4.14}
		A(r)u(r)<A(b_i)u(b_i)=A(b_i)\alpha_*,
		\quad r\in(b_i,\tau_i).
	\end{equation}

Now, we apply integration by parts to $I_2$,
\[
    \begin{aligned}
        I_2 &= -\int_{b_i}^{\tau_i} h_1(u) A'(r) \, dr \\
        &= h_1(u(b_i))A(b_i) - h_1(u(\tau_i))A(\tau_i) + \int_{b_i}^{\tau_i} h_1'(u) u' A \, dr\\
		&= h_1(\alpha_*)A(b_i) - \int_{b_i}^{\tau_i}h_1'(u) (-u') A \, dr,
    \end{aligned}
\]
where we used  $v(\tau_i) = 0$,   $A(\tau_i) = 0$, and $u(b_i) = \alpha_*$.  
 Applying the bound $A(r) < A(b_i)\frac{\alpha_*}{u(r)}$ in \eqref{eq4.14} and the fact that $h_1'(u)=uf'(u)-f(u) > 0$ for $u\geq 1$ by (C4) and (C9), we obtain,
\begin{equation}\label{I2>}
    I_2 > A(b_i) \left[ h_1(\alpha_*) - \alpha_* \int_{b_i}^{\tau_i} \frac{h_1'(u)}{u} (-u') \, dr \right].
\end{equation}
We evaluate the integral via the substitution $s = u(r)$,
\[
    \int_{b_i}^{\tau_i} \frac{h_1'(u)}{u} (-u') \, dr = \int_{u(\tau_i)}^{\alpha_*} \frac{h_1'(s)}{s} \, ds.
\]
Using the definition $h_1(s) = sf(s) - 2F(s)$, we find $h_1'(s) = sf'(s) - f(s)$, which gives $\frac{h_1'(s)}{s} = f'(s) - \frac{f(s)}{s}$. Integrating this yields:
\[
    \int_{u(\tau_i)}^{\alpha_*} \frac{h_1'(s)}{s} \, ds = \int_{u(\tau_i)}^{\alpha_*} f'(s) \, ds - \int_{u(\tau_i)}^{\alpha_*} \frac{f(s)}{s} \, ds = f(\alpha_*) - f(u(\tau_i)) - \int_{u(\tau_i)}^{\alpha_*} \frac{f(s)}{s} \, ds.
\]
Substituting this back into \eqref{I2>}, and using the fact that $h_1(\alpha_*) = \alpha_* f(\alpha_*) - 2F(\alpha_*) = \alpha_* f(\alpha_*)$, we get,
\[ 
      I_2 >   \alpha_* \left( f(u(\tau_i)) + \int_{u(\tau_i)}^{\alpha_*} \frac{f(s)}{s} \, ds \right)>0,
\]
because $u(\tau_i) > 1$ and $f(s) > 0$ for all $s \in (1, \alpha_*)$. Since $A(b_i) > 0$, we conclude that $I_2 > 0$.
\end{proof}

\begin{remark}
	Lemma \ref{lem:residual-integral-alln-sublinear} is independent of the specific form of $f$, and it generalizes the result of \cite[Lemma 5.4]{Tang2025} to $f(u) = |u|^{p-1}u - u$ for $p \in (1, \frac{n+2}{n-2})$ when $n \ge 3$, and for $p > 1$ when $n = 2$.
\end{remark}

\subsection{Positivity of $T_2$}

\begin{lemma}
	\label{lem:Qn-positive-direct-sublinear}
	Let $n\geq 2$, \(i\in\{2,\dots,k\}\). Assume that
\begin{equation}\label{eq:4.17}
		Q(c_{i-1})>0,\quad M(c_{i-1})>0,\quad
		Q_1(r)>0\quad\hbox{for }r\in[c_{i-1},c_i].
	\end{equation}
	Then \(v\) has a unique zero \(\tau_i\) in \([c_{i-1},c_i]\), and
	$
	\tau_i\in(c_{i-1},r_i).
	$	
\end{lemma}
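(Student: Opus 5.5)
The plan is to get existence of the zero from Lemma~\ref{lem:locating-v-zero-sublinear}, and uniqueness from a sign identity for \(Q_1\) at zeros of \(v\) combined with the monotonicity of \(u\) inside Phase \(i\).

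\emph{Existence.} Since \(Q(c_{i-1})>0\) and \(M(c_{i-1})>0\), Lemma~\ref{lem:locating-v-zero-sublinear} applies. It shows that \(v\) changes sign in \((c_{i-1},r_i)\). Let \(\tau_i\in(c_{i-1},r_i)\) denote its first zero there. By the symmetry \((u,v)\mapsto(-u,-v)\), which leaves \(Q\), \(M\) and \(Q_1\) invariant, I may assume \(i\) is odd. Then \(u'<0\) on \((c_{i-1},c_i)\), since \(u\) decreases in Phase \(i\) and \(c_{i-1},c_i\) are its only critical points there. By \eqref{eq:sign-record-later-phase}, \(uv>0\) on \((c_{i-1},\tau_i)\), so \(v>0\) there. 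By Lemma~\ref{lem-global-v}, \(v\in C^1([0,z_u))\) and has only simple zeros. Hence \(v'(\tau_i)<0\), and \(v<0\) immediately to the right of \(\tau_i\).

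\emph{Key identity.} At any point \(\sigma\) with \(v(\sigma)=0\), the definition of \(Q_1\) gives
\[
Q_1(\sigma)=\sigma^n\bigl[u'(\sigma)v'(\sigma)+f(u(\sigma))v(\sigma)\bigr]+(n-1)\sigma^{n-1}u'(\sigma)v(\sigma)=\sigma^n u'(\sigma)v'(\sigma).
\]
This only needs \(v\in C^1\), so it holds even if \(\sigma=z_i\), where \(v\) may fail to be \(C^2\).

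\emph{Endpoints.} First, \(v(c_{i-1})\neq0\). Indeed, \(u'(c_{i-1})=0\), so a zero of \(v\) there would give \(Q_1(c_{i-1})=0\), contradicting \eqref{eq:4.17}. In fact \(v(c_{i-1})>0\), because \(Q_1(c_{i-1})=c_{i-1}^n f(u(c_{i-1}))v(c_{i-1})\) and \(f(u(c_{i-1}))>0\) since \(u(c_{i-1})>\alpha_*>1\). The same computation at \(c_i\), where \(u'(c_i)=0\), shows \(v(c_i)\neq0\).

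\emph{Uniqueness.} Suppose \(v\) had a second zero in \((\tau_i,c_i]\), and let \(\sigma\) be the first one. By the endpoint step, \(\sigma\ne c_i\), so \(\sigma\in(\tau_i,c_i)\) and \(u'(\sigma)<0\). Since \(v<0\) on \((\tau_i,\sigma)\) and the zero is simple, \(v'(\sigma)>0\). The key identity then gives \(Q_1(\sigma)=\sigma^n u'(\sigma)v'(\sigma)<0\), contradicting \(Q_1>0\) on \([c_{i-1},c_i]\). Therefore \(\tau_i\) is the unique zero of \(v\) in \([c_{i-1},c_i]\), and \(\tau_i\in(c_{i-1},r_i)\).

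I do not expect a genuine obstacle. The only delicate point is regularity at \(z_i\in(\tau_i,c_i)\), where the coefficient \(f'(u)\) is singular. This is harmless because the argument uses only the \(C^1\) regularity and the simplicity of zeros of \(v\) from Lemma~\ref{lem-global-v}, and never differentiates \(Q_1\).
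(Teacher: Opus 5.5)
Your proof is correct and follows the paper's argument. Existence of the first zero \(\tau_i\in(c_{i-1},r_i)\) comes from Lemma~\ref{lem:locating-v-zero-sublinear}. Uniqueness follows because a further zero \(\sigma\in(\tau_i,c_i]\) would give \(Q_1(\sigma)=\sigma^n u'(\sigma)v'(\sigma)<0\) for \(\sigma<c_i\), or \(Q_1(c_i)=0\) for \(\sigma=c_i\), contradicting \(Q_1>0\). Your explicit check that \(v(c_{i-1})\neq0\), via \(Q_1(c_{i-1})=c_{i-1}^n f(u(c_{i-1}))v(c_{i-1})>0\), covers a detail the paper leaves implicit.
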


\begin{proof}
By Lemma \ref{lem:locating-v-zero-sublinear}, \(v\) changes sign in
\((c_{i-1},r_i)\). Let \(\tau_i\) be the first zero. Then
$
u'(\tau_i)v'(\tau_i)>0.
$
If \(v\) had a second zero \(\widetilde\tau_i\in(\tau_i,c_i]\), then either
$
Q_1(\widetilde\tau_i)
=
\widetilde\tau_i^n u'(\widetilde\tau_i)v'(\widetilde\tau_i)<0,
$
if \(\widetilde\tau_i<c_i\), or \(Q_1(c_i)=0\), if \(\widetilde\tau_i=c_i\).
Both contradict \(Q_1>0\) on \([c_{i-1},c_i]\). Hence \(v\) has a unique zero in
Phase \(i\).	
\end{proof}

\begin{lemma}\label{lem:comparison-sublinear}
Let $n\geq 2$, \(i\in\{2,\dots,k\}\).	Assume   \eqref{eq:4.17}
	and  
	\begin{equation}\label{eq:4.18}
		Q(r)>0\quad\hbox{for }r\in(c_{i-1},b_i]\cup[\bar b_i,c_i],\qquad 	Q(\bar b_i)>Q(b_i).
	\end{equation}
	For \(\mu\in[\alpha_*,|u(c_i)|)\), let
	\(r_{i\mu}\in(c_{i-1},b_i]\) and
	\(\bar r_{i\mu}\in[\bar b_i,c_i)\) be determined by
	$
	|u(r_{i\mu})|=|u(\bar r_{i\mu})|=\mu.
	$
	Then for every \(\mu\in[\alpha_*,|u(c_i)|)\),
	\begin{equation}\label{phir}
	\phi(\bar r_{i\mu})>\phi(r_{i\mu}),
	\quad \text{where \ }
		\phi(r):=\frac{Q(r)}{r^{n-1}|u'(r)|}.
	\end{equation}
	In particular, both \eqref{eq:4.18} and \eqref{phir} hold for $i=1$.
\end{lemma}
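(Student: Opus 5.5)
We will follow the comparison strategy of \cite[Lemma~5.5]{Tang2025}. The key observation is that $\phi$ behaves like a $Q$-type quantity reparametrized by the level $\mu=|u|$. On each of the two monotone branches $(c_{i-1},b_i]$ and $[\bar b_i,c_i)$, $u$ is strictly monotone and $|u|\ge\alpha_*$. We can therefore regard $r_{i\mu}$ and $\bar r_{i\mu}$ as functions of $\mu$ and compare
\[
\Phi(\mu):=\phi(\bar r_{i\mu})-\phi(r_{i\mu}).
\]
We will show that $\Phi(\alpha_*)>0$ and that $\Phi$ cannot vanish on $[\alpha_*,|u(c_i)|)$, using a differential inequality in $\mu$.

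\textbf{Step 1 (base point).} At $\mu=\alpha_*$ we have $r_{i\mu}=b_i$ and $\bar r_{i\mu}=\bar b_i$. By Proposition~\ref{pro2.5}(ii), $|u'(\bar b_i)|<|u'(b_i)|$. By \eqref{eq:4.18}, $Q(\bar b_i)>Q(b_i)>0$, and also $\bar b_i>b_i$, but the factor $\bar b_i^{\,n-1}$ works against us. The term $1/|u'|$ and the inequality $Q(\bar b_i)>Q(b_i)$ both help. The remaining difficulty is the factor $r^{n-1}$. To handle it, we rewrite $\phi$ via the energy:
\[
\frac{d}{dr}\bigl(r^{n-1}|u'|\bigr)=\pm r^{n-1}f(u),
\]
so $r^{n-1}|u'|$ decreases on $[\bar b_i,c_i]$ and increases appropriately on $(c_{i-1},b_i]$. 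We then compare $r^{n-1}|u'|$ at the two endpoints using $(r^{n-1}u')'=-r^{n-1}f(u)$ integrated across the intermediate region $[b_i,\bar b_i]$, where $|u|\le\alpha_*$ and $\int f(u)$ has controlled sign pieces. If this direct comparison fails, we will instead compare $\phi$ through its $\mu$-derivative, as in Step~2, starting from a limit as $\mu\to\alpha_*$.

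\textbf{Step 2 (derivative in $\mu$).} Compute
\[
\frac{d\phi}{dr}=\frac{Q'}{r^{n-1}|u'|}-\frac{Q\,(r^{n-1}|u'|)'}{(r^{n-1}|u'|)^2}
=\frac{2f(u)v}{|u'|}\mp\frac{Q f(u)}{r^{n-1}u'^2}.
\]
Since $dr/d\mu=1/|u'|$ on each branch (with orientation), we get
\[
\frac{d}{d\mu}\phi(r(\mu))=\pm\frac{f(\mu)}{u'^2}\Bigl(2v\,\mathrm{sgn}(u)\mp\frac{Q}{r^{n-1}|u'|}\Bigr),
\]
that is, an expression of the form $\frac{f(\mu)}{u'^2}\bigl(\pm 2|v|\ \text{(sign of }uv)\ \mp\phi\bigr)$. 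On the first branch the sign record \eqref{eq:sign-record-later-phase} gives $uv>0$ up to $\tau_i$; since $\tau_i\in(c_{i-1},r_i)$, on $(c_{i-1},b_i]$ either $uv>0$ or $\tau_i<b_i$. On the second branch, the uniqueness of $\tau_i$ from Lemma~\ref{lem:Qn-positive-direct-sublinear} fixes $uv<0$ there. Combining this with $|u'(\bar r_{i\mu})|<|u'(r_{i\mu})|$ from Proposition~\ref{pro2.5}(ii), we aim for an inequality of the form $\Phi'(\mu)\ge -K(\mu)\Phi(\mu)$ together with a positive forcing term. A Gronwall argument then shows that $\Phi$ stays positive. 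The positivity of $Q$ on both branches, from \eqref{eq:4.18}, keeps $\phi>0$, which is what is needed to sign the $\phi$-terms.

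\textbf{Step 3 ($i=1$).} For $i=1$, \eqref{eq:4.18} follows from Lemma~\ref{lem:3.6}, which gives $Q>0$ on $(0,z_1]\cup[\bar b_1,c_1]$. The inequality $Q(\bar b_1)>Q(b_1)$ follows from the monotonicity of $Q$ ($Q'=2r^{n-1}f(u)v$, with the signs of $f$ and $v$ from Lemma~\ref{lem:3.6}) on the sub-intervals $(b_1,r_1)$, $(r_1,z_1)$, $(z_1,\bar r_1)$, $(\bar r_1,\bar b_1)$. The argument of Steps~1–2 then applies verbatim.

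The main obstacle is Step~2: producing the correct sign of the $v$-contribution on the second branch relative to the first. We expect to handle it by invoking $Q_1>0$ from \eqref{eq:4.17}, which bounds $|v|$ by a multiple of $\phi$ through $Q_1=Q+r^{n-1}u'v$.
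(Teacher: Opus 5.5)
Your skeleton (view both monotone branches as functions of the level $\mu=|u|$, start at $\mu=\alpha_*$, then propagate $\Phi>0$ by a differential inequality) is workable. However, neither the starting inequality nor the propagation is actually established, and Step~3 rests on a false monotonicity.

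\textbf{Step 1 and Step 3.} At $\mu=\alpha_*$, integrating $(r^{n-1}u')'=-r^{n-1}f(u)$ over $[b_i,\bar b_i]$ gives no sign, because $f(u)$ changes sign at $r_i$, $z_i$ and $\bar r_i$. Your fallback, ``starting from a limit as $\mu\to\alpha_*$'', is circular. The quantity with a controlled sign on this interval is $F$, not $f$. For every $\mu\in[\alpha_*,|u(c_i)|)$,
\[
\frac{d}{dr}\Bigl[r^{2n-2}\Bigl(\tfrac12 u'^2+F(u)-F(\mu)\Bigr)\Bigr]=2(n-1)r^{2n-3}\bigl(F(u)-F(\mu)\bigr)\le 0 \quad\text{on }(r_{i\mu},\bar r_{i\mu}),
\]
since $|u|<\mu$ there; equality holds at most at $z_i$. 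Hence $\bar r_{i\mu}^{\,n-1}|u'(\bar r_{i\mu})|<r_{i\mu}^{\,n-1}|u'(r_{i\mu})|$, a strengthening of Proposition~\ref{pro2.5}(ii). At $\mu=\alpha_*$, this together with $Q(\bar b_i)>Q(b_i)>0$ gives $\phi(\bar b_i)>\phi(b_i)$.

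In Step~3, $Q$ is not monotone on $(b_1,\bar b_1)$: $Q'=2r^{n-1}f(u)v<0$ on $(\max\{b_1,\tau_1\},r_1)$ and on $(z_1,\bar r_1)$. Getting $Q(\bar b_1)>Q(b_1)$ requires the monotonicity of $B_0$, which is what the paper invokes:
\begin{itemize}
\item $Q'>0$ on $(b_1,\tau_1)$ if $\tau_1>b_1$;
\item $B_0'=-2F(u)Q_n/(ru'^2)>0$ on $(\max\{b_1,\tau_1\},\bar b_1)\setminus\{z_1\}$, because $F(u)<0$ there and $Q_n=Q_1+(n-1)r^{n-1}u'v>0$ after $\tau_1$ by Lemma~\ref{lem:3.6};
\item $B_0=Q$ at $b_1$, $\tau_1$ and $\bar b_1$.
\end{itemize}

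\textbf{Step 2.} The correct formula, identical on both branches, is
\[
\frac{d\phi}{d\mu}=\frac{f(\mu)\,Q_2}{r^{n-1}|u'|^3}=\frac{f(\mu)}{u'^2}\Bigl(\phi+\frac{2u'v}{|u'|}\Bigr).
\]
So what matters is the sign of $u'v$, not of $uv$. In fact $uv>0$ on $[\bar b_i,c_i]$, contrary to what you assert.

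On the second branch, and on the first branch before $\tau_i$, the $v$-terms help. Using $|u'(\bar r_{i\mu})|<|u'(r_{i\mu})|$ and $\phi(r_{i\mu})>0$, one gets $\Phi'(\mu)>f(\mu)\Phi(\mu)/u'(\bar r_{i\mu})^2$.

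When $\tau_i<b_i$, however, $u'v>0$ at $r_{i\mu}\in(\tau_i,b_i]$, so the first-branch $v$-term has the wrong sign. There $Q_1>0$ cannot bound $|v|$, since $Q_1=Q+r^{n-1}u'v>Q$ holds automatically when $u'v>0$. This is exactly the obstacle you flag, and your proposed remedy does not resolve it.

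What does resolve it: $A=-r^{n-1}v/u'$ satisfies $A'=-Q_n/(ru'^2)$, with $Q_n>0$ on $(\tau_i,c_i)$. Hence $r^{n-1}|v|/|u'|$ increases after $\tau_i$. Write
\[
\frac{|v|}{u'^2}=\frac{r^{n-1}|v|/|u'|}{r^{n-1}|u'|}.
\]
The numerator is larger at $\bar r_{i\mu}$ and, by the strengthened inequality above, the denominator is smaller there. This gives $|v(\bar r_{i\mu})|/u'(\bar r_{i\mu})^2>|v(r_{i\mu})|/u'(r_{i\mu})^2$ whenever $r_{i\mu}\ge\tau_i$. With this, $\Phi'>f(\mu)\Phi/u'(\bar r_{i\mu})^2$ holds for all $\mu$, and positivity propagates from $\mu=\alpha_*$.

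If $\tau_i\ge b_i$, the $\mu$-ODE is not even needed. The signs of $Q'$ on the two branches give $Q(r_{i\mu})\le Q(b_i)<Q(\bar b_i)\le Q(\bar r_{i\mu})$, and the $r^{n-1}|u'|$ comparison finishes the argument.
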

\begin{proof}
	The proof is identical to that of \cite[Lemma 5.5]{Tang2025}. For $i=1$, it relies only on the assumptions in Lemma \ref{lem:3.6} and the monotonicity of $B_0$; for $i \ge 2$, it depends solely on the abstract assumptions (C1') and (C2) on $f$. In either case, the argument is independent of the precise form of $f(u)$. Furthermore, since the proof is restricted to intervals where $u \neq 0$, the non-Lipschitz behavior of $f$ at $0$ does not affect the argument.
\end{proof}

\begin{lemma} 
	\label{lem:T2-positive-sublinear}
	Let \(n\ge2\), and   \(i\in\{2,\dots,k\}\). Assume \eqref{eq:4.17},
	\eqref{eq:4.18},
	and
	\begin{equation}\label{eq:4.20}
		T_2(r)>0\quad\hbox{for }r\in[c_{i-1},b_i];
		\qquad
		Q_2(r)>0\quad\hbox{for }r\in[c_{i-1},c_i].
	\end{equation}
	Then
	$
	T_2(r)>0,\  r\in[\bar b_i,c_i].
	$ In particular, $T_2(r)>0$ on $[\bar b_1,c_1].$
\end{lemma}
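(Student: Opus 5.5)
By the symmetry $(u,v)\mapsto(-u,-v)$, which leaves $Q,M,T_2$ and $g_2$ unchanged, I will assume $i$ is odd. The plan is to parametrize $[\bar b_i,c_i)$ by the level $\mu=|u(\bar r_{i\mu})|\in[\alpha_*,|u(c_i)|)$, as in Lemma~\ref{lem:comparison-sublinear}. At each level I use the bridging function $B_a$ of \eqref{eq:Ba-sublinear} with the frozen parameter $a=g_2(\mu)\ge 0$. Since $F_a(\mu)=F(\mu)-\tfrac{a}{2}h_1(\mu)=0$ by the choice of $a$, we get $B_a=Q-g_2(\mu)M=T_2$ at both points where $|u|=\mu$. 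Hence
\[
T_2(\bar r_{i\mu})=T_2(r_{i\mu})+\int_{r_{i\mu}}^{\bar r_{i\mu}}-2F_a(u)\frac{Q_n}{r u'^2}\,dr ,
\]
and $T_2(r_{i\mu})>0$ by \eqref{eq:4.20}, because $r_{i\mu}\in(c_{i-1},b_i]$. The case $r=c_i$ then follows by letting $\mu\uparrow|u(c_i)|$, or by a direct evaluation at $c_i$ using $Q(c_i)>0$.

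First I check that this representation is legitimate across $z_i$. On $(c_{i-1},c_i)$ we have $u'\neq0$. Also $F_a$ extends continuously to $u=0$ (since $g_2(0)=-2/(2-q)$ is finite), and $Q_n$ is $C^1$. So $B_a$ is $C^1$ on $(c_{i-1},c_i)$ and the fundamental theorem of calculus applies.

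Next I record the sign of $F_a$. Writing $F_a(s)=\tfrac12 h_1(s)\bigl[g_2(s)-g_2(\mu)\bigr]$ with $h_1(s)=\tfrac{2-q}{q}|s|^q\ge0$, and using that $g_2$ is strictly increasing in $|s|$ by \eqref{eq:g2-explicit-sublinear}, we get $-2F_a(u(r))>0$ for all $r\in(r_{i\mu},\bar r_{i\mu})$, since $|u|<\mu$ there.

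Then I split the integral at the unique zero $\tau_i\in(c_{i-1},r_i)$ of $v$ given by Lemma~\ref{lem:Qn-positive-direct-sublinear}. On $(\tau_i,\bar r_{i\mu})$ we have $u'<0$ and $v<0$, so $u'v>0$. Then $Q_n=Q_2+(n-2)r^{n-1}u'v\ge Q_2>0$ by \eqref{eq:4.20}, for every $n\ge2$ (for $n=2$ simply $Q_n=Q_2$). So this piece of the integral is positive.

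The remaining piece is $(r_{i\mu},\tau_i)$, where $u'v<0$ and $Q_n$ has no obvious sign; I expect this to be the main obstacle. There are two cases.

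\emph{Case $\tau_i>b_i$.} For the subinterval $(b_i,\tau_i)$ I invoke Lemma~\ref{lem:residual-integral-alln-sublinear} with $\widetilde u=\mu\ge\alpha_*$, which gives exactly $\int_{b_i}^{\tau_i}(\cdots)>0$ for $a=g_2(\mu)$. For $(r_{i\mu},b_i)$, where $|u|\ge\alpha_*$, I would rather not integrate at all. Instead I would rerun the argument starting from $b_i$ instead of $r_{i\mu}$, replacing the bound $T_2(r_{i\mu})>0$ by a lower bound on $B_a(b_i)=Q(b_i)-a\bigl[M(b_i)-\theta(b_i)\bigr]$. To get it, I first try the identity $T_2'=-q\,\frac{uu'}{|u|^q}(M-\theta)$ from \eqref{eq:T2prime-second-sublinear}, together with the connection identity \eqref{eq:connection-identity-sublinear} and $P>0$ (Proposition~\ref{pro2.9}/\ref{pro2.9-planar}), to control the sign of $M-\theta$ on $(c_{i-1},b_i)$. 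The aim is to show that $Q-aM-a\theta$ at $b_i$ dominates $T_2$ at some point of $[c_{i-1},b_i]$, where $T_2>0$ by hypothesis.

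\emph{Case $\tau_i\le b_i$.} Here $(\tau_i,\bar r_{i\mu})$ already contains $[b_i,\bar r_{i\mu}]$. I would start the comparison at $\tau_i$, where $B_a(\tau_i)=Q(\tau_i)-aM(\tau_i)$, and show this is positive by the same monotonicity of $T_2$ on $[c_{i-1},\tau_i]$, since there $|u|\ge\alpha_*$ and $g_2(u)\ge a$ with $M>0$.

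If these direct estimates fall short, the fallback is the comparison Lemma~\ref{lem:comparison-sublinear} ($\phi(\bar r_{i\mu})>\phi(r_{i\mu})$), which transfers information from $r_{i\mu}$ to $\bar r_{i\mu}$ through $Q$ and $|u'|$. I would combine it with $|u'(\bar r_{i\mu})|<|u'(r_{i\mu})|$ from Proposition~\ref{pro2.5}(ii). Finally, the case $i=1$ is covered the same way: its hypotheses \eqref{eq:4.17}, \eqref{eq:4.18} and \eqref{eq:4.20} are supplied by Lemma~\ref{lem:3.6} and Lemma~\ref{lem:comparison-sublinear}, and $T_2>0$ on $[0,b_1]$ follows from \eqref{eq:T2-T1-relation-sublinear}, since $T_1>0$ and $M>0$ there by Lemma~\ref{lem:no-second-zero-before-z1-sublinear}.
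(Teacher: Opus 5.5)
Your level-$\mu$ bridge
\[
T_2(\bar r_{i\mu})-T_2(r_{i\mu})=\int_{r_{i\mu}}^{\bar r_{i\mu}}-2F_a(u)\frac{Q_n}{r u'^2}\,dr,\qquad a=g_2(\mu),
\]
is correct; the paper uses the same bridge, but only at a single level. The $C^1$ regularity across $z_i$, the sign of $F_a$, and the positivity on $(\tau_i,\bar r_{i\mu})$ are also fine.

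For $n=2$ you are already done, though you did not notice it. By \eqref{eq:4.20}, $Q_n=Q_2>0$ on all of $[c_{i-1},c_i]$. So the piece before $\tau_i$ is harmless, and you get $T_2(\bar r_{i\mu})>T_2(r_{i\mu})>0$ at every level, with no critical-point analysis. For $r=c_i$, use the limit $\mu\uparrow|u(c_i)|$. The ``direct evaluation'' does not work, since $T_2(c_i)=Q(c_i)-g_2(u(c_i))M(c_i)$ with $g_2M>0$.

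The genuine gap is for $n\ge3$, on the piece $(r_{i\mu},\min\{b_i,\tau_i\})$ where $u'v<0$. Your plan to bound $B_a(b_i)$, resp.\ $B_a(\tau_i)$, from below by a value of $T_2$ goes in the wrong direction. Using $2F(u)=g_2(u)h_1(u)$ one gets
\[
B_a(r)=T_2(r)-\bigl(a-g_2(u(r))\bigr)\bigl(M(r)-\theta(r)\bigr).
\]
On $(r_{i\mu},\tau_i]$ both brackets are positive: $|u|<\mu$, and $M>0\ge\theta$ by the sign record.
\begin{itemize}
\item Consequently $B_a(b_i)=T_2(b_i)-a\,(M-\theta)(b_i)<T_2(b_i)$, and this may well be negative when $a$ is large.
\item In the case $\tau_i\le b_i$, the inequality $g_2(u)\ge a$ on $[c_{i-1},\tau_i]$ holds only if $\mu\le|u(\tau_i)|$. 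That is exactly when the bad piece is empty. Otherwise $B_a(\tau_i)=Q(\tau_i)-aM(\tau_i)<T_2(\tau_i)$, with unknown sign.
\end{itemize}
So the left contribution cannot be controlled on its own.

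The paper compensates it with the right outer part, level by level. With $\phi_n=Q_n/(r|u'|^3)$ it shows $\phi_n(\bar r_{is})+\phi_n(r_{is})>0$ in three moves.
\begin{itemize}
\item The connection identity $Q-Pv/u=\omega(M-\theta)$ and $P>0$ give $|v(r_{is})|<sQ/P$ and $|v(\bar r_{is})|>sQ/P$.
\item These bounds are combined with $\phi(\bar r)>\phi(r)$ and $|u'(\bar r)|<|u'(r)|$.
\item They are further combined with the monotonicity of $r^n/P$ and of $r^{n-3}$.
\end{itemize}
Your fallback names the latter ingredients but not the $v$-bounds. The bound at $\bar r_{is}$ is equivalent to $M-\theta>0$, i.e.\ to $T_2'(\bar r_{is})<0$, and this fails for levels past the minimum of $T_2$.

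The missing idea is the reduction to one level:
\begin{itemize}
\item $T_2'(c_i)=(2-q)c_i^{n-1}u(c_i)v(c_i)>0$.
\item Differentiating $T_2=B_0+\frac{2F(u)}{(2-q)uu'}T_2'$ at a critical point gives $T_2''=(2-q)uu'\,Q_n/(ru'^2)>0$ on $(\bar b_i,c_i)$, because $Q_n>0$ there.
\item Hence $T_2$ has at most one critical point $t_i$ in $(\bar b_i,c_i)$, and it is a strict minimum.
\end{itemize}
One then runs your bridge only at $\mu=U=|u(t_i)|$. There $T_2'<0$ on $(\bar b_i,t_i)$, which supplies exactly the sign the pairing needs.
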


\begin{proof}
By \eqref{eq:T2-T1-relation-sublinear} and Lemma \ref{lem:no-second-zero-before-z1-sublinear}, $T_2(r) > 0$ for $r \in (0, z_1)$. Combined with Lemmas \ref{lem:comparison-sublinear} and \ref{lem:3.6}, we observe that the assumptions hold for $i=1$, except that $M, Q, Q_1, Q_2$, and $T_2$ vanish at $c_0 = 0$. However, as discussed in the proof of \cite[Lemma 5.6]{Tang2025}, the argument for $i=1$ follows the exact same lines as that for $2 \le i \le k$.

Let $2\le i\le k$.	By Lemma \ref{lem:Qn-positive-direct-sublinear},  \(v\) has a unique zero
	\(\tau_i\in(c_{i-1},r_i)\) in Phase \(i\). Hence \(u'v>0\) on
	\((\tau_i,c_i]\). In particular,
	\begin{equation}\label{eq:Qn-positive-right-part}
		Q_n(r)=Q_1(r)+(n-1)r^{n-1}u'v>0,
		\qquad r\in[\bar b_i,c_i].
	\end{equation}
	At \(b_i\) and \(\bar b_i\), we have \(F(u)=0\), hence \(g_2(u)=0\). Therefore, recalling \eqref{eq:def-T2-g2}, we obtain that
	\[
	T_2(b_i)=Q(b_i)>0,
	\qquad
	T_2(\bar b_i)=Q(\bar b_i)>Q(b_i)>0.
	\]
	Moreover, \eqref{eq:T2prime-first-sublinear} gives
	\begin{equation}\label{eq:T2prime-used}
		T_2'(r)
		=
		(2-q)r^{n-1}u(r)v(r)
		-
		q\,\frac{u(r)u'(r)}{|u(r)|^q}M(r).
	\end{equation}
	Since \(u'(c_i)=0\) and \(u(c_i)v(c_i)>0\), we get
	\[
	T_2'(c_i)=(2-q)c_i^{n-1}u(c_i)v(c_i)>0.
	\]
	If \(T_2\) has no critical point in \((\bar b_i,c_i)\), then \(T_2'>0\) on this
	interval, and the conclusion follows from \(T_2(\bar b_i)>0\).
	
	Assume now that \(T_2\) has a critical point in \((\bar b_i,c_i)\). We first prove
	that such a critical point is unique and is a strict local minimum. Recall 
	\[
	B_0(r):=
	Q(r)-2F(u(r))\frac{r^{n-1}v(r)}{u'(r)},\quad
	B_0'(r)
	=
	-2F(u(r))\frac{Q_n(r)}{r u'(r)^2},
	\]
	and that,
	\[
	T_2'(r)
	=
	-q\frac{u(r)u'(r)}{|u(r)|^q}\bigl(M(r)-\theta(r)\bigr),
	\]
	with \[
	\theta(r)=\frac{2-q}{q}\frac{r^{n-1}v(r)}{u'(r)}|u(r)|^q,
	\]
	it  follows from
	\(T_2=Q-g_2M\), \(g_2(u)=2F(u)/(uf(u)-2F(u))\), and
	\(uf(u)-2F(u)=\frac{2-q}{q}|u|^q\) 
	that
	\begin{equation}\label{eq:T2-B0-identity-sublinear}
		T_2(r)
		=
		B_0(r)
		+
		\frac{2F(u(r))}{(2-q)u(r)u'(r)}\,T_2'(r),
		\qquad u(r)u'(r)\neq0 .
	\end{equation}
	
	Let \(t_i\in(\bar b_i,c_i)\) be a critical point of \(T_2\). Differentiating
	\eqref{eq:T2-B0-identity-sublinear} at \(t_i\), using \(T_2'(t_i)=0\), gives
	\[
	0
	=
	-2F(u(t_i))\frac{Q_n(t_i)}{t_i u'(t_i)^2}
	+
	\frac{2F(u(t_i))}{(2-q)u(t_i)u'(t_i)}\,T_2''(t_i).
	\]
	Since \(F(u(t_i))>0\) on \((\bar b_i,c_i)\), we obtain
	\begin{equation*} 
		T_2''(t_i)
		=
		(2-q)u(t_i)u'(t_i)\frac{Q_n(t_i)}{t_i u'(t_i)^2}.
	\end{equation*}
	On \((\bar b_i,c_i)\), one has \(u u'>0\), and by
	\eqref{eq:Qn-positive-right-part}, \(Q_n>0\). Hence \(T_2''(t_i)>0\). Thus every
	critical point of \(T_2\) in \((\bar b_i,c_i)\) is a strict local minimum. Hence
	there can be at most one such critical point. Denote it by \(t_i\). It remains to
	prove \(T_2(t_i)>0\).
	
	Choose the unique point \(s_i\in(c_{i-1},b_i)\) such that
	\begin{equation}\label{eq:tbar-ti-def}
		|u(s_i)|=|u(t_i)|=:U>|u(b_i)|.
	\end{equation}
	By \eqref{eq:4.20}, \[T_2(s_i)>0.\]  
	Recall the definition of $g_2$ in \eqref{eq:def-T2-g2},  and set
	\[
	a_i:=g_2(u(t_i))=\frac{2F(u(t_i))}{u(t_i)f(u(t_i))-2F(u(t_i))},
	\qquad
	F_{a_i}(u):=F(u)-\frac{a_i}{2}\bigl[uf(u)-2F(u)\bigr].
	\]
 
	By \eqref{eq:g2-explicit-sublinear}, 
	\begin{equation}\label{eq:H_i-sublinear}
		\begin{aligned}
			-2F_{a_i}(u)
			&=
			-2F(u)+a_i\bigl(uf(u)-2F(u)\bigr)  \\
			&=
			-u^2+\frac{2}{q}|u|^q
			+
			\frac{qU^{2-q}-2}{2-q}\cdot \frac{2-q}{q}|u|^q =
			U^{2-q}|u|^q-u^2 .
		\end{aligned}
	\end{equation}
We have 
$-2F_{a_i}(u(r)) > 0$ for  $r\in(s_i, t_i)$ except $r=z_i$,   and 
	$
	F_{a_i}(u(t_i))=F_{a_i}(u(s_i))=0
	$.

	By \eqref{eq:Ba-sublinear} and \eqref{eq:Baprime-sublinear}, 
	\begin{equation}\label{eq:Bai-prime-sublinear}
		B_{a_i}(r):=
		Q(r)-a_iM(r)-2F_{a_i}(u(r))\frac{r^{n-1}v(r)}{u'(r)},\qquad
		B_{a_i}'(r)
		=
		-2F_{a_i}(u)\frac{Q_n(r)}{r u'(r)^2}.
	\end{equation}
	At \(r=s_i,t_i\), the term involving \(F_{a_i}\) vanishes, and hence by  \eqref{eq:def-T2-g2}, \eqref{eq:Bai-prime-sublinear}, and the definition of $a_i$, we have 
	\[
	B_{a_i}(s_i)=T_2(s_i),
	\qquad
	B_{a_i}(t_i)=T_2(t_i).
	\]
	Therefore, \eqref{eq:Bai-prime-sublinear} gives
	\begin{equation}\label{eq:T2-bridge-difference-sublinear}
		T_2(t_i)-T_2(s_i)=B_{a_i}(t_i)-B_{a_i}(s_i)
		=
		\int_{s_i}^{t_i}
		-2F_{a_i}(u)\frac{Q_n(r)}{r u'(r)^2}\,dr .
	\end{equation}
	If the integral in \eqref{eq:T2-bridge-difference-sublinear} is positive, then $T_2(r) \ge T_2(t_i) > T_2(s_i) > 0$ for all $r \in [\bar b_i, c_i]$, since $t_i$ is the minimum of $T_2$ on this interval. 

 For $n=2$, we have $Q_n = Q_2 > 0$ and $-F_{a_i}(u) >0$ on $(s_i, t_i)$ by \eqref{eq:4.20} and \eqref{eq:H_i-sublinear},  which implies $T_2(t_i) > T_2(s_i) > 0$. 

 Thus, it remains to show that the integral in \eqref{eq:T2-bridge-difference-sublinear} is positive for $n \ge 3$.We split
	\[
	[s_i,t_i]
	=
	[s_i,b_i]\cup[b_i,\bar b_i]\cup[\bar b_i,t_i].
	\]
	First consider the middle part \([b_i,\bar b_i]\). If \(\tau_i>b_i\), then  Lemma
	\ref{lem:residual-integral-alln-sublinear}, applied with \(\widetilde u=U\), \eqref{eq:tbar-ti-def},  and \eqref{eq:H_i-sublinear} give
	\begin{equation}\label{eq:middle-residual-positive}
		\int_{b_i}^{\tau_i}
		-2F_{a_i}(u)\frac{Q_n(r)}{r u'(r)^2}\,dr>0.
	\end{equation}
	If \(\tau_i\le b_i\), this term is absent. On the rest of \([b_i,\bar b_i]\),  namely on
	$
	[\max\{b_i,\tau_i\},\bar b_i],
	$
	$v$ has already passed its unique zero \(\tau_i\), while \(u'\)
	keeps a fixed sign throughout Phase \(i\). So Lemma  \ref{lem:locating-v-zero-sublinear} implies that \(u'v>0 \) on $
	(\max\{b_i,\tau_i\},\bar b_i]$, and therefore \(Q_n\geq Q_1>0\). Since \(|u(r)|\le\alpha_*\le U\) on
	\((b_i,\bar b_i]\), one has \(-2F_{a_i}(u(r))\ge0\). Consequently,
	\begin{equation}\label{eq:middle-nonnegative}
		\int_{b_i}^{\bar b_i}
		-2F_{a_i}(u)\frac{Q_n(r)}{r u'(r)^2}\,dr\ge0.
	\end{equation}
	
	We next treat the two outer parts. For \(\mu\in[\alpha_*,U]\), let
	$
	r_{i\mu}\in[s_i,b_i],
	\ 
	\bar r_{i\mu}\in[\bar b_i,t_i]
	$
	be determined by
	\[
	|u(r_{i\mu})|=|u(\bar r_{i\mu})|=\mu.
	\]
	By Lemma \ref{lem:comparison-sublinear},
	\begin{equation}\label{eq:phi-comparison-T2}
		\phi(\bar r_{i\mu})>\phi(r_{i\mu}),
		\qquad\text{with}\quad
		\phi(r)=\frac{Q(r)}{r^{n-1}|u'(r)|}.
	\end{equation}
	Set
	\[
	\phi_n(r):=\frac{Q_n(r)}{r|u'(r)|^3}.
	\]
	We claim that
	\begin{equation}\label{eq:psi-pair-positive}
		\phi_n(\bar r_{i\mu})+\phi_n(r_{i\mu})>0,
		\qquad \mu\in(\alpha_*,U).
	\end{equation}
	 By \(\bar r_{i\mu}\in[\bar b_i,t_i]\subset(\tau_i,c_i)\),  \(u'v>0\).
	Since \(Q>0\) on \([\bar b_i,c_i]\), we have
	\[
	Q_n(\bar r_{i\mu})
	=
	Q(\bar r_{i\mu})+n(\bar r_{i\mu})^{n-1}u'v
	>
	n(\bar r_{i\mu})^{n-1}|u'v|,
	\]
	and hence
	\begin{equation}\label{eq:psi-plus-lower}
		\phi_n(\bar r_{i\mu})
		>
		n(\bar r_{i\mu})^{n-2}
		\frac{|v(\bar r_{i\mu})|}{|u'(\bar r_{i\mu})|^2}.
	\end{equation}
	If \(\phi_n(r_{i\mu})\ge0\), then \eqref{eq:psi-pair-positive} follows. Suppose
	\(\phi_n(r_{i\mu})<0\). Since \(Q_2>0\) and
	\(Q_n=Q_2+(n-2)r^{n-1}u'v\), this implies \(u'v<0\) at \(r_{i\mu}\), and
	\[
	-Q_n(r_{i\mu})
	<
	(n-2)(r_{i\mu})^{n-1}|u'v|.
	\]
	Thus
	\begin{equation}\label{eq:psi-minus-upper}
		-\phi_n(r_{i\mu})
		<
		(n-2)(r_{i\mu})^{n-2}
		\frac{|v(r_{i\mu})|}{|u'(r_{i\mu})|^2}.
	\end{equation}
	We now estimate the \(v\)-terms. Recall the connection identity \eqref{eq:connection-identity-sublinear} and \eqref{eq:theta-sublinear},
	\begin{equation}\label{eq:connection-identity-T2-proof}
		Q(r)-P(r)\frac{v(r)}{u(r)}
		=
		\omega(r)(M(r)-\theta(r)),
		\qquad
		\theta(r):=\frac{2-q}{q}\frac{r^{n-1}v(r)}{u'(r)}|u(r)|^q .
	\end{equation}
	 	Because \(u'v<0\) at \(r_{i\mu}\), \(v\) has the unique zero \(\tau_i\) in Phase \(i\), and \(u'\) has a
	fixed sign in this phase, we must have \(r_{i\mu}<\tau_i\).
	Then the sign record gives \(uv>0\) on
	\((c_{i-1},r_{i\mu}]\). Hence
	\[
	M'(r)
	=
	r^{n-1}(uf'(u)-f(u))v
	=
	(2-q)r^{n-1}u|u|^{q-2}v>0
	\qquad\hbox{on }(c_{i-1},r_{i\mu}].
	\]
	Using the assumption \(M(c_{i-1})>0\), we obtain
	$
	M(r_{i\mu})>0.
	$ Moreover, since \(uu'<0,\ u'v<0\) $$\omega=-\frac{ru'}u>0,\quad \theta=\frac{2-q}{q}\frac{r^{n-1}v}{u'}|u|^q<0.$$ 
	  Hence \(M-\theta>0\) at $r_{i\mu}$. By \eqref{eq:connection-identity-T2-proof}, we have
	$
	Q-P\frac vu=\omega(M-\theta)>0
	$ at $r_{i\mu}$.
	Thus
	$
	P\frac vu<Q$ at $r_{i\mu}$.
	Since \(uv>0\) and \(|u(r_{i\mu})|=\mu\), we have
	\[
	\frac vu=\frac{|v|}{|u|}=\frac{|v|}{\mu}.
	\]
	By Proposition \ref{pro2.9}, $P>0$, and hence
	\begin{equation}\label{eq:v-left-bound}
		|v(r_{i\mu})|
		<
		\frac{Q(r_{i\mu})}{P(r_{i\mu})}\,\mu .
	\end{equation}
	At \(\bar r_{i\mu}\), since \(\bar r_{i\mu}\in[\bar b_i,t_i]\), we have \(uv>0\),
	\(uu'>0\), and \(\omega<0\). Since \(t_i\) is the unique critical point and   minimum of $T_2$, there holds \(T_2'<0\) on \((\bar b_i,t_i)\).
	From
	\[
	T_2'=-q\frac{uu'}{|u|^q}(M-\theta)
	\]
	we obtain \(M-\theta>0\). Therefore \eqref{eq:connection-identity-T2-proof} gives $
	P\frac vu>Q$ at $\bar r_{i\mu}$, which implies that
	\begin{equation}\label{eq:v-right-bound}
		|v(\bar r_{i\mu})|
		>
		\frac{Q(\bar r_{i\mu})}{P(\bar r_{i\mu})}\,\mu .
	\end{equation}

	Combining \eqref{eq:psi-plus-lower} with \eqref{eq:v-right-bound}, we obtain
	\begin{equation}\label{eq:psi-plus-QP-lower}
		\phi_n(\bar r_{i\mu})
		>
		n \mu\,(\bar r_{i\mu})^{n-2}
		\frac{Q(\bar r_{i\mu})}
		{P(\bar r_{i\mu})|u'(\bar r_{i\mu})|^2}.
	\end{equation}
	Similarly, combining \eqref{eq:psi-minus-upper} with \eqref{eq:v-left-bound}, we get
	\begin{equation}\label{eq:psi-minus-QP-upper}
		-\phi_n(r_{i\mu})
		<
		(n-2)\mu\,(r_{i\mu})^{n-2}
		\frac{Q(r_{i\mu})}
		{P(r_{i\mu})|u'(r_{i\mu})|^2}.
	\end{equation}
	Therefore, to prove
	\eqref{eq:psi-pair-positive}
	it is enough to show that
	\begin{equation}\label{eq:key-QP-compare}
		(\bar r_{i\mu})^{n-2}
		\frac{Q(\bar r_{i\mu})}
		{P(\bar r_{i\mu})|u'(\bar r_{i\mu})|^2}
		>
		(r_{i\mu})^{n-2}
		\frac{Q(r_{i\mu})}
		{P(r_{i\mu})|u'(r_{i\mu})|^2}.
	\end{equation}
	By the definition of \(\phi\) in \eqref{eq:phi-comparison-T2},
	\[
	Q(r)=\phi(r)r^{n-1}|u'(r)|.
	\]
	Hence
	\[
	r^{n-2}\frac{Q(r)}{P(r)|u'(r)|^2}
	=
	r^{n-2}
	\frac{\phi(r)r^{n-1}|u'(r)|}{P(r)|u'(r)|^2}
	=
	r^{n-3}\frac{\phi(r)}{|u'(r)|}\frac{r^n}{P(r)}.
	\]
	Now use the following four comparisons at the paired points \(r_{i\mu}<\bar r_{i\mu}\):
	$
	\phi(\bar r_{i\mu})>\phi(r_{i\mu})
	$
	by Lemma \ref{lem:comparison-sublinear};
	$
	|u'(\bar r_{i\mu})|<|u'(r_{i\mu})|
	$
	by Proposition \ref{pro2.5} (ii), since the two points have the same value of
	\(|u|\) and \(\bar r_{i\mu}>r_{i\mu}\);
	\[
	\frac{(\bar r_{i\mu})^n}{P(\bar r_{i\mu})}
	>
	\frac{(r_{i\mu})^n}{P(r_{i\mu})}
	\]
	because \(P(r)/r^n\) is strictly decreasing by Proposition \ref{pro2.9} (iii);
	and
	$
	(\bar r_{i\mu})^{n-3}\ge (r_{i\mu})^{n-3}.
	$ 
	Multiplying these inequalities gives \eqref{eq:key-QP-compare}, which is  \eqref{eq:psi-pair-positive}.

	Changing variables \(s=|u(r)|\) on the two outer intervals yields
	\[
	\begin{aligned}
		&\int_{s_i}^{b_i}
		-2F_{a_i}(u(r))\frac{Q_n(r)}{r u'(r)^2}\,dr
		+
		\int_{\bar b_i}^{t_i}
		-2F_{a_i}(u(r))\frac{Q_n(r)}{r u'(r)^2}\,dr  \\
		&\qquad \qquad \qquad=
		\int_{\alpha_*}^{U}
		-2F_{a_i}(s)\bigl[
		\phi_n(r_{is})+\phi_n(\bar r_{is})
		\bigr]\,ds>0.
	\end{aligned}
	\]
	Here \(-2F_{a_i}(s)=U^{2-q}s^q-s^2\ge0\) for \(s\in[\alpha_*,U]\), and it is not
	identically zero.
	
	Combining this positive contribution from the outer intervals with
	\eqref{eq:middle-nonnegative}, and using \eqref{eq:middle-residual-positive} when
	\(\tau_i>b_i\), we conclude that
	\[
	\int_{s_i}^{t_i}
	-2F_{a_i}(u(r))\frac{Q_n(r)}{r u'(r)^2}\,dr>0.
	\]
	Then by \eqref{eq:T2-bridge-difference-sublinear},
	$
	T_2(t_i)>T_2(s_i)>0.
	$
	Since \(t_i\) is the unique minimum point of \(T_2\) on \((\bar b_i,c_i)\), it
	follows that
	$
	T_2(r)>0,\  r\in[\bar b_i,c_i].
	$
	The proof is complete.
\end{proof}

\subsection{Proof of the phase transition lemma}

\begin{proof}[Proof of Lemma \ref{lem:phase-transition-sublinear}]
	By Lemma \ref{lem:3.6} and Lemma \ref{lem:T2-positive-sublinear}, we have
	\[
	Q(c_1)>0,\qquad M(c_1)>0,\qquad T_2(r)>0,\quad r\in[\bar b_1,c_1].
	\]
	which proves the first assertion.
	
	Throughout the proof, whenever the positivity of \(P\) is used, it follows from
	Proposition \ref{pro2.9} if \(n\ge3\), and from Proposition \ref{pro2.9-planar}
	if \(n=2\).
	
	We need to confirm the statements (i), (ii), and (iii) under the assumption that
	\[
	Q(c_{i-1})>0,\qquad M(c_{i-1})>0,\qquad T_2(c_{i-1})>0\qquad \text{for some }i\in\{2,\ldots,k\}.
	\]
	By Lemma \ref{lem:locating-v-zero-sublinear}, \(v\) changes sign in
	\((c_{i-1},r_i)\). Let \(\tau_i\) be the first zero of \(v\) in this interval.
	Then
	\[
	\tau_i\in(c_{i-1},r_i),
	\]
	and on \((c_{i-1},\tau_i)\) the sign record from
	Lemma \ref{lem:locating-v-zero-sublinear} gives
	\begin{equation}\label{eq:phase-proof-sign-1}
		uf(u)>0,\qquad f'(u)>0,\qquad uv>0,\qquad u'v'>0,
	\end{equation}
	and
	\begin{equation}\label{eq:phase-proof-sign-2}
		uu'<0,\qquad uv'<0,\qquad u'v<0,\qquad vv'<0.
	\end{equation}
	Consequently, \eqref{AF'} gives
	\begin{equation}\label{Q'p}
	Q'(r)=2r^{n-1}f(u)v>0,
	\qquad
	M'(r)=r^{n-1}(uf'(u)-f(u))v>0\qquad \text{on }(c_{i-1},\tau_i).
	\end{equation}
	Hence
	\begin{equation}\label{eq:phase-QM-before-tau}
		Q(r)>Q(c_{i-1})>0,\qquad
		M(r)>M(c_{i-1})>0,
		\qquad r\in(c_{i-1},\tau_i].
	\end{equation}
	
	We proceed in steps.
	
	\medskip
	\noindent{\bf Step 1. Positivity of \(T_2\) on \([c_{i-1},b_i]\).}
	
	We prove
	\begin{equation}\label{eq:T2-left-positive-phase}
		T_2(r)>0,\qquad T_2'(r)>0,
		\qquad r\in[c_{i-1},b_i].
	\end{equation}
	Recall \eqref{eq:theta-sublinear} and \eqref{eq:T2prime-second-sublinear} that
	\begin{equation}\label{eq:T2prime-phase-proof}
		T_2'(r)
		=
		-q\frac{u(r)u'(r)}{|u(r)|^q}\bigl(M(r)-\theta(r)\bigr),
		\qquad
		\theta(r):=\frac{2-q}{q}
		\frac{r^{n-1}v(r)}{u'(r)}|u(r)|^q .
	\end{equation}
	On \((c_{i-1},\tau_i]\), by the sign record we have
	\(uv>0\), \(uu'<0\), \(u'v<0\), and \(M>0\). Hence \(\theta<0\), and so
	\(M-\theta>0\). Therefore
	\begin{equation}\label{eq4.54}
		T_2'
		>0
		\quad\hbox{on }(c_{i-1},\tau_i];\qquad T_2(\tau_i)>T_2(c_{i-1})>0.
	\end{equation}

	If \(b_i\le\tau_i\), 
	\eqref{eq:T2-left-positive-phase} follows directly.
	Assume now that \(\tau_i<b_i\). We first show that
	\begin{equation}\label{eq:uv-negative-tau-bi-phase}
		uv<0,\qquad r\in(\tau_i,b_i].
	\end{equation}
	Suppose \eqref{eq:uv-negative-tau-bi-phase} is not true. Since \(uv<0\) immediately to the right of \(\tau_i\), there
	exists a first zero \(\sigma\in(\tau_i,b_i]\) of \(v\). On
	\((\tau_i,\sigma)\), we have \(uv<0\) and \(u'v>0\), hence $$\theta>0\quad\text{on }(\tau_i,\sigma).$$ At
	\(r=\tau_i\), one has \(\theta=0\) and \eqref{eq:phase-QM-before-tau}: \(M>0\). 
	  As \(u\) does not change sign on $(c_{i-1},z_i)$,
	\(v\) approaches zero at \(\sigma\) from the sign opposite to that of \(u\).
	Moreover, \(v'(\sigma)\neq0\), otherwise the linearized equation would imply
	\(v\equiv0\). Hence
	$
	u(\sigma)v'(\sigma)>0.
	$
	Therefore
	\[
	M(\sigma)
	=
	\sigma^{n-1}(u'v-uv')\big|_{r=\sigma}
	=
	-\sigma^{n-1}u(\sigma)v'(\sigma)<0.
	\]
	So $M(\sigma)-\theta(\sigma)<0$. By this and
	$M(\tau_i)-\theta(\tau_i)>0$, 
	there exists a first point \(\tilde\tau_i\in(\tau_i,\sigma)\) such that
	\[
	M(\tilde\tau_i)=\theta(\tilde\tau_i).
	\]
	Before \(\tilde\tau_i\), one has \(M-\theta>0\). Since \(uu'<0\) on
	\((\tau_i,b_i]\), \eqref{eq:T2prime-phase-proof} gives \(T_2'>0\) on
	\((\tau_i,\tilde\tau_i)\). Therefore by \eqref{eq4.54}, 
	\begin{equation}\label{t22}T_2(\tilde\tau_i)>T_2(\tau_i)>0.
	\end{equation}
	
	By the connection identity \eqref{eq:connection-identity-sublinear},
	\[
	Q(\tilde\tau_i)-P(\tilde\tau_i)\frac{v(\tilde\tau_i)}{u(\tilde\tau_i)}
	=
	\omega(\tilde\tau_i)\bigl(M(\tilde\tau_i)-\theta(\tilde\tau_i)\bigr)=0.
	\]
	Since \(uv<0\) at \(\tilde\tau_i\) and \(P>0\), we get \(Q(\tilde\tau_i)<0\).
	Moreover, \(M(\tilde\tau_i)=\theta(\tilde\tau_i)>0\). Since  \(\tilde\tau_i<\sigma\leq b_i\), then
	\(F(u(\tilde\tau_i))>0\), hence  \(g_2(u(\tilde\tau_i))=2F(u)/(uf(u)-2F(u))>0\) (see \eqref{eq:def-T2-g2}), and
	$$
	T_2(\tilde\tau_i)
	=
	Q(\tilde\tau_i)-g_2(u(\tilde\tau_i))M(\tilde\tau_i)<0,
	$$
	which contradicts \eqref{t22}.  This proves \eqref{eq:uv-negative-tau-bi-phase}.
	
	We now prove \(T_2'>0\) on \((\tau_i,b_i]\). Suppose not, and let
	\(\tau_i^*\in(\tau_i,b_i]\) be the first point such that
	\(T_2'(\tau_i^*)=0\). Then \eqref{eq4.54} gives \(T_2(\tau_i^*)>0\), and by
	\eqref{eq:T2prime-phase-proof},
	\[
	M(\tau_i^*)=\theta(\tau_i^*).
	\]
	Using \eqref{eq:uv-negative-tau-bi-phase}, the same connection-identity argument
	as above gives
	\[
	Q(\tau_i^*)<0,\qquad M(\tau_i^*)=\theta(\tau_i^*)>0.
	\]
	Since \(\tau_i^*\le b_i\), then \(g_2(u(\tau_i^*))\ge 0\), so
	\(T_2(\tau_i^*)<0\), a contradiction.  
	Therefore \(T_2'>0\) on \((\tau_i,b_i]\), and
	\eqref{eq:T2-left-positive-phase} follows.
	
	\medskip
	\noindent{\bf Step 2. Positivity of \(Q,Q_1,Q_2\) on the left part.}
	
	We prove
	\begin{equation}\label{eq:left-QQ1Q2-phase}
		Q(r)>0,\qquad Q_1(r)>0,\qquad Q_2(r)>0,
		\qquad r\in[c_{i-1},\max\{b_i,\tau_i\}].
	\end{equation}
	On \((c_{i-1},\tau_i)\), by \eqref{eq:phase-proof-sign-1},
	\eqref{eq:phase-proof-sign-2},   we have that
	\[Q'(r)=2r^{n-1}f(u)v>0,\quad
	Q_1'(r)=r^{n-1}\bigl(u'v'+f(u)v\bigr)>0,
	\quad
	Q_2'(r)=2r^{n-1}u'v'>0.
	\]
	Since \(u'(c_{i-1})=0\), the definitions of \(Q_1,Q_2\) give
	\[
	Q(c_{i-1})=Q_1(c_{i-1})=Q_2(c_{i-1})>0.
	\]
	Thus \(Q,Q_1,Q_2>0\) on \([c_{i-1},\tau_i]\).
	
	If \(b_i\le\tau_i\), then \eqref{eq:left-QQ1Q2-phase} follows. Suppose
	\(\tau_i<b_i\). By \eqref{eq:T2-left-positive-phase}, \eqref{eq:uv-negative-tau-bi-phase} in Step 1, on \((\tau_i,b_i]\) we have \(uv<0\), \(T_2'>0\),
	and \(uu'<0\). Hence, \(u'v>0\), and \eqref{eq:T2prime-phase-proof} gives 
	\(M>\theta>0\). Moreover, \((\tau_i,b_i]\), \(F(u)\geq 0\)  and \(g_2(u)\geq 0\). Consequently,
	\[
	Q_2>Q_1>Q=T_2+g_2(u)M>0
	\qquad\hbox{on }(\tau_i,b_i].
	\]
	This proves \eqref{eq:left-QQ1Q2-phase}.
	
	\medskip
	\noindent{\bf Step 3. Positivity of \(Q_1\) after \(\tau_i\).}
	
	We prove
	\begin{equation}\label{eq:Q1-after-tau-phase}
		Q_1(r)>0,\  r\in[\tau_i,\bar b_i]; \quad v \text{ has no zero in } (\tau_i,\bar b_i].
	\end{equation}
We first show that if $Q_1>0$ in  $[\tau_i,\tau)$ for $\tau\in(\tau_i,\bar b_i]$ then \(v\) has no zero
	in \((\tau_i,\tau]\). 
  Suppose not, let  
	\(\sigma\in(\tau_i,\tau]\) be the first zero of \(v\) after \(\tau_i\).
	Then \(v\) keeps a fixed sign on \((\tau_i,\sigma)\). Since \(u'\) keeps a fixed
	sign in Phase \(i\), and \(v\) changes sign at \(\tau_i\), we have
	\[
	u'v>0\qquad\hbox{on }(\tau_i,\sigma).
	\]
	As \(v\) approaches zero at \(\sigma\) from this sign, and the zero is simple
	because \(\sigma<c_i\), we have
	\[
	u'(\sigma)v'(\sigma)<0.
	\]
	By the definition of \(Q_1\),
	$$
	Q_1(\sigma)=\sigma^n u'(\sigma)v'(\sigma)<0.
	$$
	This contradicts with $Q_1>0$ in  $[\tau_i,\tau)$ and $\sigma\in(\tau_i,\tau]$.
	Therefore \(v\) has no zero in \((\tau_i,\sigma]\).
	
	Now, proving \eqref{eq:Q1-after-tau-phase} reduces to showing that  
	$Q_1(r)>0,\  r\in[\tau_i,\bar b_i]$.
	Suppose   \(\tilde q_i\in(\tau_i,\bar b_i]\) be the first zero of
	\(Q_1\). By Step 2, \(\tilde q_i>\max\{b_i,\tau_i\}\). 
	Then \(v\) changes sign exactly once in \((c_{i-1},\tilde q_i]\) at $\tau_i$. Since \(u'\) keeps a fixed sign
	throughout Phase \(i\), and since \(u'v<0\) on \((c_{i-1},\tau_i)\), we obtain
	\[
	u'v>0\qquad\hbox{on }(\tau_i,\tilde q_i].
	\]
	In particular, since \(n\ge2\),
	\[
	Q_n=Q_1+(n-1)r^{n-1}u'v>0
	\qquad\hbox{on }(\tau_i,\tilde q_i).
	\]
	
	Consider
	\[
	B_0(r)=Q(r)-2F(u(r))\frac{r^{n-1}v(r)}{u'(r)},
	\]
	with
	\[
	B_0'(r)
	=
	-2F(u(r))\frac{Q_n(r)}{r u'(r)^2}.
	\]
	If \(\tau_i\le b_i\), then \(F(u)<0\) on \((b_i,\tilde q_i)\), and therefore
	$
	B_0(\tilde q_i)>B_0(b_i)=Q(b_i)>0.
	$
	If \(b_i<\tau_i\), then \(F(u)<0\) on \((\tau_i,\tilde q_i)\), and hence
	$
	B_0(\tilde q_i)>B_0(\tau_i)=Q(\tau_i)>0.
	$
	Thus in either case
	\begin{equation*}
		B_0(\tilde q_i)>0.
	\end{equation*}
	On the other hand, \(Q_1(\tilde q_i)=0\) gives
	$
	Q(\tilde q_i)
	+\tilde q_i^{\,n-1}u'(\tilde q_i)v(\tilde q_i) =0.
	$
	Therefore
	\[
	\begin{aligned}
		B_0(\tilde q_i)
		=
		-\tilde q_i^{\,n-1}u'v
		-
		2F(u)\frac{\tilde q_i^{\,n-1}v}{u'}  
		&=
		-2\tilde q_i^{\,n-1}
		\frac{v(\tilde q_i)}{u'(\tilde q_i)}
		\left(\frac12 u'(\tilde q_i)^2+F(u(\tilde q_i))\right) \\
		&=
		-2\tilde q_i^{\,n-1}
		\frac{v(\tilde q_i)}{u'(\tilde q_i)}E(\tilde q_i)<0.
	\end{aligned}
	\]
	Here \(u'v>0\) gives \(v/u'>0\), and \(E(\tilde q_i)>0\) by
	Proposition \ref{pro2.8} (iii), 
	since $i\leq k$, \(\tilde q_i\in(c_{i-1},c_i)\), and $u$ has $k+1$ zeros before $z_u$. This contradiction proves
	\(
	Q_1(r)>0, r\in[\tau_i,\bar b_i],
	\)
	and hence $v$ has no zero in $(\tau_i,\bar b_i]$.

	\medskip
	\noindent{\bf Step 4. The comparison }
	\begin{equation}\label{eq:Q-bbar-greater-b-phase}
		Q(\bar b_i)>Q(b_i)>0.
	\end{equation}
	
	First assume that \(\tau_i\le b_i\). Then
	$
	[b_i,\bar b_i]\subset[\tau_i,\bar b_i].
	$
	By Step 3, \(v\) has no zero in \((\tau_i,\bar b_i]\). Since \(u'\) keeps a fixed
	sign throughout Phase \(i\), and \(v\) changes sign at \(\tau_i\), we have
	\[
	u'v\ge0\qquad\hbox{on }[b_i,\bar b_i].
	\]
  Hence, using \(Q_1>0\) from
	Step 3 and \(n\ge2\), we obtain
	\[
	Q_n
	=
	Q_1+(n-1)r^{n-1}u'v
	>0
	\qquad\hbox{on }[b_i,\bar b_i].
	\]
	Since \(F(u)<0\) on \((b_i,\bar b_i)\) except at the isolated zero \(z_i\), we have
	\[
	B_0'(r)
	=
	-2F(u(r))\frac{Q_n(r)}{r u'(r)^2}>0\quad\text{on }(b_i,\bar b_i)\setminus\{z_i\}.
	\]
	Therefore \(B_0\)
	is strictly increasing from \(b_i\) to \(\bar b_i\). Since
	\(F(u(b_i))=F(u(\bar b_i))=0\), we obtain by Step 2
	\[
	Q(\bar b_i)=B_0(\bar b_i)>B_0(b_i)=Q(b_i)>0.
	\]

	Now suppose that $b_i < \tau_i$. 
	Then \eqref{Q'p} implies
	\[
	Q(\tau_i)>Q(b_i)>Q(c_{i-1})>0.
	\]
	On \((\tau_i,\bar b_i)\), Step 3 gives \(Q_1>0\), and \(v\) has no zero in
	\((\tau_i,\bar b_i]\), we have \(u'v>0\). Hence, since \(n\ge2\),
	\[
	Q_n
	=
	Q_1+(n-1)r^{n-1}u'v>0
	\quad\hbox{on }(\tau_i,\bar b_i).
	\]
	By \(F(u)<0\) on \((\tau_i,\bar b_i)\),
	\[
	B_0'(r)
	=
	-2F(u(r))\frac{Q_n(r)}{r u'(r)^2}>0
	\quad\hbox{on }(\tau_i,\bar b_i).
	\]
	Since \(v(\tau_i)=0\) and \(F(u(\bar b_i))=0\), we have
	$
	B_0(\tau_i)=Q(\tau_i),
	\
	B_0(\bar b_i)=Q(\bar b_i).
	$
	Thus
	\[
	Q(\bar b_i)=B_0(\bar b_i)>B_0(\tau_i)=Q(\tau_i)>Q(b_i)>0.
	\]
	This proves \eqref{eq:Q-bbar-greater-b-phase}.

	\medskip
	\noindent{\bf Step 5. Completion of an ordinary phase.}
	
	In Step 3, we have proved that $v$ has no zero in $(\tau_i,\bar b_i]$. We next show that \(v\) has no zero in \((\bar b_i,c_i]\). Suppose, to the contrary,
	that \(\tilde\tau_i\in(\bar b_i,c_i]\) is the first zero of \(v\) after
	\(\tau_i\). Then \(u'v>0\) on \((\tau_i,\tilde\tau_i)\), and at
	\(\tilde\tau_i\) one has \(u'v'\le0\), with equality only possibly if
	\(\tilde\tau_i=c_i\). Hence
	\[
	Q(\tilde\tau_i)
	=
	\tilde\tau_i^n u'(\tilde\tau_i)v'(\tilde\tau_i)\le0.
	\]
	On the other hand, on $(\bar b_i, c_i)$, $|u| > \alpha_*$ ensures $f(u)$ shares the sign of $u$. Given that $u$ and $v$ change sign exactly once
	in $(c_{i-1}, \tilde \tau_i)$
	at $z_i$ and $\tau_i$, respectively, $u$ and $v$ have the same sign on $[\bar b_i, \tilde\tau_i)$, yielding $f(u)v > 0$.Consequently,
	\[
	Q'(r)=2r^{n-1}f(u)v>0
	\quad\hbox{on }[\bar b_i,\tilde\tau_i).
	\]
	Therefore, by
	\eqref{eq:Q-bbar-greater-b-phase},
	$
	Q(\tilde\tau_i)>Q(\bar b_i)>0,
	$
	a contradiction. Thus \(v\) has no zero after \(\tau_i\) in Phase \(i\). Hence
	\(v\) has a unique zero \(\tau_i\) in \([c_{i-1},c_i]\), and
	\[
	\tau_i\in(c_{i-1},r_i).
	\]

	We now renew the positivity of \(Q,M,T_2\) at \(c_i\). By
	\eqref{eq:Q-bbar-greater-b-phase}, \(Q(\bar b_i)>0\). Since \(v\) has no zero in
	\((\bar b_i,c_i]\), the functions \(u\) and \(v\) have the same sign on
	\([\bar b_i,c_i]\). Moreover, on \((\bar b_i,c_i)\), one has \(|u|>\alpha_*\),
	and hence \(f(u)\) has the same sign as \(u\). Therefore
	$
	Q'(r)=2r^{n-1}f(u)v>0
	\ \hbox{on }(\bar b_i,c_i).
	$
	Consequently,
	$$
	Q(r)>Q(\bar b_i)>0, r\in [\bar b_i, c_i].
	$$
	At \(z_i\), since \(u=0\) and
	\(u'v>0\), we have
	$
	M(z_i)=z_i^{\,n-1}u'(z_i)v(z_i)>0.
	$
	On \((z_i,c_i]\), the functions \(u\) and \(v\) have the same sign. Therefore
	$
	M'(r)=r^{n-1}\bigl(uf'(u)-f(u)\bigr)v=(2-q)r^{n-1}|u|^{q-2}uv>0,
	$
	and hence
	\[
	M(c_i)>M(z_i)>0.
	\]

	It remains to prove \(T_2(c_i)>0\).
	As \eqref{eq:4.18} has been verified,
	we verify the assumptions 
	\eqref{eq:4.17} and \eqref{eq:4.20}
	of
	Lemma \ref{lem:T2-positive-sublinear}.
	First, the assumptions
	$
	Q(c_{i-1})>0,\  M(c_{i-1})>0
	$
	are exactly the induction hypotheses.
	By  Step 2 and Step 3,
	$Q_1>0$ on $[c_{i-1},\bar b_i]$.
	On the remaining interval \([\bar b_i,c_i]\), since \(v\) has no zero
	after \(\tau_i\) in Phase \(i\), \(u'v>0\) on \([\bar b_i,c_i]\). Moreover,
	as proved above,
	$
	Q(r)>0,\ r\in[\bar b_i,c_i].
	$
	Therefore
	\[
	Q_1(r)=Q(r)+r^{n-1}u'(r)v(r)>0,
	\quad r\in[\bar b_i,c_i].
	\]
	Thus, \(Q_1>0\) on the whole interval \([c_{i-1},c_i]\), and assumption \eqref{eq:4.17} is verified.

	  Step 1 gives
	\(
	T_2(r)>0 
	\),$r\in[c_{i-1},b_i]$.
	We verify \(Q_2>0\) on \([c_{i-1},c_i]\). Step 2 gives \(Q_2>0\) on
	\([c_{i-1},\max\{b_i,\tau_i\}]\). Since \(v\) has no zero after \(\tau_i\), we have
	$
	u'v\ge0
	$ on $(\tau_i,c_i]$.
	Then,
	\[
	Q_2(r)=Q_1(r)+r^{n-1}u'(r)v(r)>0\quad\text{on }(\tau_i,c_i].
	\]
	Hence
	$
	Q_2(r)>0,\  r\in[c_{i-1},c_i].
	$
	Thus 	
	all assumptions of Lemma \ref{lem:T2-positive-sublinear} are now verified.
	Therefore
	\[
	T_2(r)>0,\quad r\in[\bar b_i,c_i].
	\]

	\medskip
	\noindent{\bf Step 6. The last phase.}
	
	Assume first that \(u\) is not a bound state. Then, by the assumption in
	Lemma \ref{lem:phase-transition-sublinear}, \(u\) has exactly \(k+1\) zeros before
	\(z_u=+\infty\). Hence \(z_{k+1}\) exists, and
	\(r_{k+1}\in(c_k,z_{k+1})\) is well-defined.
	Applying Steps 1--3 with \(i=k+1\), and replacing the right endpoint
	\(\bar b_i\) by \(z_{k+1}\), we obtain
	\[
	Q_1(r)>0,\quad r\in[\tau_{k+1},z_{k+1}].
	\]
	To show $v$ has no zeros in $(\tau_{k+1},z_{k+1}]$, suppose for contradiction that $\sigma\in(\tau_{k+1},z_{k+1}]$ is the first such zero. Since $u'$ has a constant sign on $(c_k,z_{k+1})$ and $v$ changes sign at $\tau_{k+1}$, we have $u'v>0$ on $(\tau_{k+1},\sigma)$. Because $v$ approaches 0 from this side and $u'(\sigma)\neq 0$, it follows that $u'(\sigma)v'(\sigma)<0$. This yields $Q_1(\sigma)=\sigma^n u'(\sigma)v'(\sigma)<0$, contradicting $Q_1>0$ on $[\tau_{k+1},z_{k+1}]$. Thus, $v$ has no zeros in $(\tau_{k+1},z_{k+1}]$. 
	Since Lemma
	\ref{lem:locating-v-zero-sublinear} gives the first zero
	$
	\tau_{k+1}\in(c_k,r_{k+1}),
	$
	assertion (ii) follows.
	
	Assume now that \(u\) is a bound state. Applying Steps 1--3 with \(i=k+1\), and
	replacing the right endpoint \(\bar b_i\) by \(z_u\), gives
	\[
	Q_1(r)>0,\quad r\in[\tau_{k+1},z_u).
	\]
	The same zero-crossing argument shows that \(v\) has no zero in
	\((\tau_{k+1},z_u)\). Therefore \(v\) has a unique zero
	$
	\tau_{k+1}\in(c_k,r_{k+1})
	$
	in \([c_k,z_u)\). By Lemma \ref{limitv}, $v$ is eventually monotone.
	If $v$ stays bounded as $r\uparrow z_u$, then by Lemma \ref{limitv} and Proposition \ref{pro2.6} (iv),
	$ |uv'|+
	|u'v'|\to 0$, $f(u)v\to 0$, $F(u)/u'\to 0$ as $r\uparrow z_u$. 
	Hence, by the definition of $B_0$, 
	 \(
	 B_0(r)\to 0,
	 \) as $r\uparrow z_u$.
	  Since \(v\) has no zero in
	\((\tau_{k+1},z_u)\) and \(u'\) has a fixed sign in the last nodal interval, while
	\(v\) changes sign at \(\tau_{k+1}\), we have
	\[
	u'v>0,\qquad r\in(\tau_{k+1},z_u).
	\]
	Together with \(Q_1>0\), this gives
	$
	Q_n=Q_1+(n-1)r^{n-1}u'v>0,
	\  r\in(\tau_{k+1},z_u).
	$
	Let \(b_{k+1}\in(c_k,r_{k+1})\) be defined by
	$
	|u(b_{k+1})|=\alpha_*
	$.
	Then \(F(u)<0\) on \((\max\{b_{k+1},\tau_{k+1}\},z_u)\).
	Therefore, $B_0'>0$ in $(\max\{b_{k+1},\tau_{k+1}\},z_u)$ and $B_0(\max\{b_{k+1},\tau_{k+1}\})<0$.
However, 
since there holds either $F=0$ or $v=0$ at $\max\{b_{k+1},\tau_{k+1}\}$, applying \eqref{eq:left-QQ1Q2-phase} with $i=k+1$,
we have 
\[
B_0(\max\{b_{k+1},\tau_{k+1}\})=  Q(\max\{b_{k+1},\tau_{k+1}\})>0.
\]
This contradiction implies	 
	\[
	\lim_{r\uparrow z_u}|v(r)|=\infty.
	\]	
\end{proof}

\section{Proof of main result}

\begin{lemma}\label{lem:zero-structure-sublinear}
	Let \(n\ge2\). Let \(u=u(\cdot,\alpha)\) be a solution of \eqref{eq2.1} with
	\(\alpha>\alpha_*\), and let \(v=\partial_\alpha u\). Then the following
	statements hold.
	
	\begin{enumerate}
		\item[(a)] If \(u\) has exactly \(k\ge1\) simple zeros
		\(0<z_1<\cdots<z_k<z_u\), then \(v\) has exactly \(k\) zeros
		\(0<\tau_1<\cdots<\tau_k<z_k\) in \([0,z_k]\). Moreover,
		$
		\tau_1\in(0,z_1),\
		\tau_i\in(z_{i-1},z_i)\ \hbox{for }2\le i\le k,
		$
		and
		$
		u'(\tau_i)v'(\tau_i)>0,\ 1\le i\le k .
		$
		
		\item[(b)] If \(u\) is a ground state, then \(v\) has exactly one zero
		\(\tau_1\in(0,z_u)\). Moreover, \(v\) is eventually monotone as
		\(r\uparrow z_u\), and
		$
		\lim_{r\uparrow z_u}v(r)=-\infty .
		$

		\item[(c)] 	If \(u\) is a bound state with exactly \(k\ge1\) simple zeros
		\(0<z_1<\cdots<z_k<z_u\), then \(v\) has exactly \(k+1\) zeros in
		\([0,z_u)\). The first \(k\) zeros satisfy the conclusion in \((a)\), and the
		last zero satisfies
		$
		\tau_{k+1}\in(c_k,r_{k+1})\subset(z_k,z_u).
		$
		Moreover, \(v\) is eventually monotone as \(r\uparrow z_u\), and
		$
		\lim_{r\uparrow z_u}|v(r)|=\infty .
		$
	\end{enumerate}
\end{lemma}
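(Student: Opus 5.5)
The plan is to derive all three parts from the phase transition lemma (Lemma~\ref{lem:phase-transition-sublinear}) by induction on the phase index, with Phase~1 handled by Lemmas~\ref{lem:no-second-zero-before-z1-sublinear} and~\ref{lem:3.6}. Part (b) is exactly Lemma~\ref{lem:no-second-zero-before-z1-sublinear}(ii), together with the eventual monotonicity from Lemma~\ref{limitv}. So the work lies in (a) and (c).

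For (a), let $u$ have exactly $k$ simple zeros before $z_u$. If $k=1$, Lemma~\ref{lem:no-second-zero-before-z1-sublinear}(i) gives a unique zero $\tau_1\in(0,r_1)\subset(0,z_1)$ of $v$ on $[0,z_1]$. At this zero $v'(\tau_1)<0$ and $u'(\tau_1)<0$, hence $u'v'>0$.

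If $k\ge2$, there are two cases.
\begin{itemize}
\item If $u$ is not a bound state, $u$ has $k$ zeros with $z_u=+\infty$. We apply the phase transition lemma with $k-1$ in place of $k$; its hypothesis ``exactly $(k-1)+1$ zeros'' is then satisfied.
\item If $u$ is a bound state, we apply the lemma with this $k$.
\end{itemize}
In either case, the first assertion gives $Q,M,T_2>0$ at $c_1$. Lemma~\ref{lem:3.6} gives the single zero $\tau_1$ in $[0,c_1]$. Iterating (i) yields a unique zero $\tau_i\in(c_{i-1},r_i)$ of $v$ in each phase $[c_{i-1},c_i]$, with $Q,M,T_2>0$ renewed at $c_i$. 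For the non-bound-state case, (ii) gives the last zero $\tau_k\in(c_{k-1},r_k)$ and no further zero in $(\tau_k,z_k]$. For the bound-state case, the phases up to $c_k\supset[0,z_k]$ are covered directly.

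Since $c_{i-1}\in(z_{i-1},z_i)$ and $r_i<z_i$, we get $\tau_i\in(z_{i-1},z_i)$. The phases overlap only at the endpoints $c_i$, where $v\neq0$ because each $\tau_i$ is interior. Hence the count of zeros is exactly $k$.

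The sign $u'(\tau_i)v'(\tau_i)>0$ comes from the sign record \eqref{eq:sign-record-later-phase} of Lemma~\ref{lem:locating-v-zero-sublinear}. On $(c_{i-1},\tau_i)$ it gives $u'v<0$. Since $u'$ keeps a fixed sign and $v$ vanishes simply at $\tau_i$ (Lemma~\ref{lem-global-v} excludes double zeros), $v'(\tau_i)$ has the sign of $u'(\tau_i)$.

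For (c), we take $u$ to be a bound state with $k$ zeros and run the same induction through Phase $k$. This produces $\tau_1,\dots,\tau_k$ as in (a), located in $[0,c_k]$, with $Q,M,T_2>0$ at $c_k$. Part (iii) of the phase transition lemma then supplies the unique zero $\tau_{k+1}\in(c_k,r_{k+1})$ in $[c_k,z_u)$, with $r_{k+1}\in(z_k,z_u)$ since $c_k>z_k$. It also supplies eventual monotonicity and $|v|\to\infty$. The total is therefore $k+1$ zeros.

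The main obstacle is bookkeeping rather than analysis. One must match the hypothesis ``exactly $k$ / $k+1$ zeros'' of Lemma~\ref{lem:phase-transition-sublinear} to the present situation, in particular for a non-bound-state $u$ whose last phase is only the semi-phase $(c_{k-1},z_k]$. One must also check that no zero of $v$ escapes the counting on the interval $(c_{k-1},z_k]$ between the last critical point and the last zero; that check is what (ii) with the shifted index provides.
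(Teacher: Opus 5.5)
Your proposal is correct and follows the paper's proof essentially step for step. Phase~1 is handled by Lemmas~\ref{lem:no-second-zero-before-z1-sublinear} and~\ref{lem:3.6}, with Lemma~\ref{lem:T2-positive-sublinear} giving $T_2(c_1)>0$. Part (i) of the phase transition lemma is iterated, part (ii) is applied with $k-1$ in place of $k$ for a non-bound-state, part (iii) handles the semi-tail of a bound state, and the sign record of Lemma~\ref{lem:locating-v-zero-sublinear} gives $u'(\tau_i)v'(\tau_i)>0$. Your explicit appeal to Lemma~\ref{limitv} for eventual monotonicity in (b) is, if anything, slightly more precise than the paper's citation of Lemma~\ref{lem:no-second-zero-before-z1-sublinear} alone.
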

\begin{proof}
	\medskip
	\noindent{\bf Proof of (a).}
	Assume that \(u\) has exactly \(k\ge1\)  simple zeros
	$
	0<z_1<\cdots<z_k<z_u .
	$
	If \(k=1\), then   from
	Lemma \ref{lem:no-second-zero-before-z1-sublinear}, \(v\) has exactly one
	zero \(\tau_1\in(0,r_1)\subset(0,z_1)\) before \(z_1\) with
	\(v'(\tau_1)<0\). Since \(u'<0\) on \((0,z_1)\), we obtain
	$
	u'(\tau_1)v'(\tau_1)>0.
	$

	Let \(k\ge2\). By Lemma \ref{lem:3.6} and
	Lemma \ref{lem:T2-positive-sublinear}, we have
	$
	Q(c_1)>0,\  M(c_1)>0,\  T_2(c_1)>0 .
	$
	If \(u\) is a bound state, then Lemma \ref{lem:phase-transition-sublinear} (i)
	can be applied successively for \(2\le i\le k\). Thus, for each
	\(2\le i\le k\), it gives exactly one zero \(\tau_i\) of \(v\) in the
	corresponding phase, with
	\[
	\tau_i\in(c_{i-1},r_i)\subset(z_{i-1},z_i),
	\]
	and no other zero of \(v\) in that phase. Together with the
	Phase 1 zero \(\tau_1\in(0,z_1)\), this gives exactly \(k\) zeros
	$
	0<\tau_1<\cdots<\tau_k<z_k
	$
	of \(v\) in \([0,z_k]\), with
	$$
	\tau_1\in(0,z_1),\quad
	\tau_i\in(z_{i-1},z_i),\quad  2\le i\le k .
	$$
	
	If \(u\) is not a bound state, then Lemma
	\ref{lem:phase-transition-sublinear} (i) is applied successively for
	\(2\le i\le k-1\), if \(k\ge3\). For the last zero \(z_k\), Lemma
	\ref{lem:phase-transition-sublinear} (ii), applied with \(k-1\) in place of
	\(k\), gives exactly one zero
	\[
	\tau_k\in(c_{k-1},r_k)\subset(z_{k-1},z_k),
	\]
	and no other zero of \(v\) before \(z_k\). Together with the previous zeros,
	this again gives exactly \(k\) zeros
	$
	0<\tau_1<\cdots<\tau_k<z_k
	$
	of \(v\) in \([0,z_k]\), with the stated locations.
	
	The sign condition
	$
	u'(\tau_i)v'(\tau_i)>0,\  1\le i\le k,
	$
	is given by Lemma \ref{lem:locating-v-zero-sublinear}.
	This proves (a).
	
	\medskip
	\noindent{\bf Proof of (b).}
	Assume that \(u\) is a ground state. By the ground-state part of
	Lemma \ref{lem:no-second-zero-before-z1-sublinear}, \(v\) has a unique zero
	\[
	\tau_1\in(0,r_1)\subset(0,z_u).
	\]
	The same lemma gives that \(v<0\) in \((\tau_1,z_u)\), that \(v\) is eventually
	monotone as \(r\uparrow z_u\), and that
	$
	\lim_{r\uparrow z_u}v(r)=-\infty .
	$
	Therefore (b) follows.
	
	\medskip
	\noindent{\bf Proof of (c).}
	Assume that \(u\) is a bound state with exactly \(k\ge1\) zeros
	\[
	0<z_1<\cdots<z_k<z_u .
	\]
	By (a), \(v\) has exactly \(k\) zeros in \([0,z_k]\), and these zeros satisfy the
	locations stated in (a).
	
	It remains to consider the last interval \((z_k,z_u)\). If \(k=1\), then, as
	above, Lemma \ref{lem:3.6} and Lemma
	\ref{lem:T2-positive-sublinear} give
	\[
	Q(c_1)>0,\qquad M(c_1)>0,\qquad T_2(c_1)>0 .
	\]
	If \(k\ge2\), applying Lemma \ref{lem:phase-transition-sublinear} (i) successively
	through the ordinary phases gives
	\[
	Q(c_k)>0,\qquad M(c_k)>0,\qquad T_2(c_k)>0 .
	\]
	Thus in either case the hypotheses needed to enter the last semi-tail phase at
	\(c_k\) are satisfied. Lemma \ref{lem:phase-transition-sublinear} (iii) then gives
	a unique additional zero
	\[
	\tau_{k+1}\in(c_k,r_{k+1})\subset(z_k,z_u)
	\]
	of \(v\), and no further zero in \([c_k,z_u)\). Moreover, Lemma
	\ref{lem:phase-transition-sublinear} (iii) gives that \(v\) is eventually monotone
	as \(r\uparrow z_u\), and 
	$
	\lim_{r\uparrow z_u}|v(r)|=\infty .
	$
	Therefore \(v\) has exactly \(k+1\) zeros in \([0,z_u)\), and (c) follows.
	The proof is complete.
\end{proof}

\begin{proof}[Proof of Theorem \ref{thm:main}]
	Let \(n\ge2\). For \(\alpha>0\), let \(u(r,\alpha)\) be the solution of \eqref{eq2.1}, and let
	$
	\mathcal N(\alpha)
	$
	denote the number of simple zeros of \(u(\cdot,\alpha)\) in \((0,z_u(\alpha))\). By
	Proposition \ref{pro2.8} (ii), this number is finite whenever \(u\) is nodal; if
	\(u\) does not change sign, then \(\mathcal N(\alpha)=0\).

	First we record the pausing property. Suppose that \(u(\cdot,\alpha)\) is
	oscillatory behind its last simple zero. According to Proposition \ref{pro2.8} (iv), \(u\) oscillates about \(1\) or \(-1\) behind its last zero. So there exists a
	critical point \(R\) in the tail with \(0<|u(R,\alpha)|<1\), and hence
	\(E(R,\alpha)=F(u(R,\alpha))<0\).   By continuous dependence on \(\alpha\), for every
	\(\beta\) sufficiently close to \(\alpha\), the solution \(u(\cdot,\beta)\) has the
	same number of zeros as \(u(\cdot,\alpha)\) on \((0,R]\), and
	\[
	E(R,\beta)<0
	\]
	with \(u(R,\beta)\) having the same sign as \(u(R,\alpha)\). Therefore  Proposition
	\ref{pro2.7}  applies to \(u(\cdot,\beta)\) at the point \(R\), and it follows that
	\(u(\cdot,\beta)\) has no zero after \(R\).
	 Hence
	 	$\mathcal N(\beta)=\mathcal N(\alpha)$
	for all \(\beta\) sufficiently close to \(\alpha\).

	Next we prove the jumping property at ground states and bound states. Let
	\(u(\cdot,\alpha)\) be either a ground state or a   bound state with
	\(j=\mathcal N(\alpha)\) simple zeros. We claim that
	\begin{equation}\label{eq:N-jump-main-proof}
		\mathcal N(\beta)\ge j+1
		\quad\hbox{for all }\beta>\alpha\hbox{ sufficiently close to }\alpha .
	\end{equation}
	
	By Lemma \ref{lem:zero-structure-sublinear} (b), (c), the variation \(v=\partial_\alpha u\)
	has one additional zero between the last simple zero of \(u\) and $z_u$. Moreover, \(v\) is eventually
	monotone as \(r\uparrow z_u(\alpha)\), and \(|v(r,\alpha)|\to\infty\).
	We prove the claim for the case that $u>0$ in the last nodal interval, which implies $u > 0$ and $u' < 0$ near $z_u(\alpha)$. The negative case is analogous by reversing the signs of $u$ and $v$. 

By Lemma \ref{lem:locating-v-zero-sublinear}, $u'v < 0$ prior to the last zero of $v$ in the final phase. Since $u' < 0$ in this last interval and $v$ changes sign at its final zero, Lemma \ref{lem:zero-structure-sublinear}(c) yields
\[
v(r, \alpha) < 0 \quad \text{for } r \text{ sufficiently close to } z_u(\alpha), \quad \text{and} \quad v(r, \alpha) \to -\infty \quad \text{as } r\uparrow z_u(\alpha).
\]
We can choose $R < z_u(\alpha)$ sufficiently close to $z_u(\alpha)$ such that
\[ 
	a>u(r,\alpha)>0,\quad u'(r,\alpha)<0,\quad
	\partial_\alpha u(r,\alpha)=v(r,\alpha)<0,\quad \partial_\alpha u'(r,\alpha)=v'(r,\alpha)<0,
\]
 for all $r \in [R, z_u(\alpha))$, where $a > 0$ is chosen small enough so that $f'(t) < 0$ for $t\in (0, a)$.

Then for $\beta > \alpha$   sufficiently close to $\alpha$, 
\[
0<u(R,\beta)<u(R,\alpha),\quad u'(R, \beta) < u'(R,\alpha).
\]
Since the proof of \cite[Lemma 4.4]{Cortazar1996a} relies only on $f$ being decreasing on $[0, a]$, it applies here. Its conclusion (b) yields a simple zero of $u(\cdot, \beta)$ in $(R, z_u(\alpha))$, which, by Lemma \ref{lem2.4}, must occur prior to $z_u(\beta)$. Moreover, choosing \(R\) to the right of all the \(j\) simple zeros of \(u(\cdot,\alpha)\), these zeros persist in \((0,R)\) for \(\beta\) sufficiently close to \(\alpha\) by the uniform \(C^1\) continuous dependence. Hence \(u(\cdot,\beta)\) has at least the \(j\) old simple zeros before \(R\), together with the new zero in \((R,z_u(\alpha))\). This proves claim \eqref{eq:N-jump-main-proof}.

  We also need the non-decreasing property. Let
  \(u(\cdot,\alpha)\) be either a ground state or a   bound state with
  \(j=\mathcal N(\alpha)\) simple zeros. We claim that
  \begin{equation}\label{eq:non-decreasing}
  	\mathcal N(\beta)= j
  	\quad\hbox{for all }\beta<\alpha\hbox{ sufficiently close to }\alpha .
  \end{equation}
  Let
  \(z=z_u(\alpha)\), and suppose first that
  \(u(\cdot,\alpha)>0\) in the last nodal interval. The case in which the last
  nodal interval is negative follows by applying the same argument to \(-u\).
  Choose \(a>0\) so small that \(f\) is strictly decreasing on \((0,a)\) and
  \[
  F(s)<0,\qquad 0<s<a.
  \]
  By Lemma \ref{lem:zero-structure-sublinear}, and by the compact support
  asymptotics, we may choose \(R<z\), to the right of the last simple zero of
  \(u(\cdot,\alpha)\), such that
  \[
  0<u(r,\alpha)<a,\qquad u'(r,\alpha)<0,
  \qquad R\le r<z,\qquad\text{and\ }\quad
  v(R,\alpha)<0,\qquad v'(R,\alpha)<0.
  \]
  Let \(\beta<\alpha\) be sufficiently close to \(\alpha\), and set
  \[
  w(r):=u(r,\beta)-u(r,\alpha).
  \]
  Since
  $
  w(R)=(\beta-\alpha)v(R,\alpha)+o(|\beta-\alpha|),
  \ 
  w'(R)=(\beta-\alpha)v'(R,\alpha)+o(|\beta-\alpha|),
  $
  we have
  \[
  w(R)>0,\qquad w'(R)>0.
  \]
  Moreover, by taking \(\beta\) closer to \(\alpha\), we may also assume that
  \[
  0<u(R,\beta)<a,\qquad u'(R,\beta)<0.
  \]
  
 We now show that \(u(r,\beta)\) has no zero after \(R\).
 As long as \(0<u(r,\beta)<a\) and \(w>0\), we have
 \[
 0<u(r,\alpha)<u(r,\beta)<a .
 \]
 Since \(f\) is strictly decreasing on \((0,a)\), subtracting the two
 equations gives
 \[
 \bigl(r^{n-1}w'(r)\bigr)'
 =
 -r^{n-1}\bigl(f(u(r,\beta))-f(u(r,\alpha))\bigr)>0 .
 \]
 Hence \(r^{n-1}w'\) is increasing. In particular, \(w'\) stays positive,
 and therefore \(w\) cannot have a first zero before \(u(r,\beta)\) reaches the
 level \(a\).
 
 If \(u(r,\beta)\) reaches the level \(a\) before \(z\), then, since
 \(u(R,\beta)<a\) and \(u'(R,\beta)<0\), there exists
 \(c_\beta\in(R,z)\) such that
 $
 u'(c_\beta,\beta)=0,\  0<u(c_\beta,\beta)<a .
 $
 Thus
 $
 E(c_\beta,\beta)=F(u(c_\beta,\beta))<0 .
 $
 By Proposition \ref{pro2.7}, \(u(r,\beta)\) remains positive after
 \(c_\beta\), and hence no zero occurs after \(R\).
 
 It remains to consider the case where \(u(r,\beta)<a\) for all
 \(R\le r<z\). Then the preceding comparison gives
 \[
 w'(r)>0,\qquad w(r)>0,\qquad R\le r<z .
 \]
 Since \(u'(r,\alpha)\to0\) as \(r\uparrow z\), while
 \(u'(R,\beta)<0\), there exists \(c_\beta\in(R,z]\) such that
 $
 u'(c_\beta,\beta)=0 .
 $
 At this point,
 $
 0<u(c_\beta,\beta)<a,
 $
 and hence
 $
 E(c_\beta,\beta)=F(u(c_\beta,\beta))<0 .
 $
 Again Proposition \ref{pro2.7} implies that \(u_\beta\) remains positive
 after \(c_\beta\). Therefore \(u_\beta\) has no zero in \((R,\infty)\).
 
 On the fixed interval \([0,R]\), the \(j\) simple zeros of \(u_\alpha\)
 persist uniquely by the  uniform $C^1$  continuous dependence. Thus the claim \eqref{eq:non-decreasing} holds.
  
The proof of the pausing property demonstrates that   the set of parameters $\alpha$ yielding oscillatory solutions with no double zero is open. Consequently, by Proposition \ref{pro2.8} (ii) , non-decreasing property, and the jumping property, the parameters $\alpha$ corresponding to a ground state or a bound state must be isolated points in $(0, \infty)$. Since $\mathcal{N}(\alpha)$ remains constant on the open intervals between these points and undergoes strict upward jumps only at the isolated points themselves, it follows that $\mathcal{N}$ is globally non-decreasing:
\begin{equation}\label{eq:zero-curve-main-proof}
	\mathcal{N}(\beta) \ge \mathcal{N}(\alpha), \quad \text{for all } \beta > \alpha > 0.
\end{equation}

	We now prove the classification and uniqueness assertions. Let \(\alpha_0\) be an
	initial value for which \(u(\cdot,\alpha_0)\) is a ground state, and for \(k\ge1\)
	let \(\alpha_k\) be an initial value for which \(u(\cdot,\alpha_k)\) is a
	  bound state with $k$ simple zeros. 
	By the jumping property \eqref{eq:N-jump-main-proof} at the ground state \(\alpha_0\), and the monotonicity
	\eqref{eq:zero-curve-main-proof},
	\[
	\mathcal N(\alpha)\ge1,\quad \alpha>\alpha_0.
	\]
	Hence no ground state can occur for \(\alpha>\alpha_0\). If there were another
	ground state with initial value \(\gamma<\alpha_0\), then the jumping property at
	\(\gamma\), together with monotonicity, would imply
	$
	\mathcal N(\alpha_0)\ge1,
	$
	which contradicts the fact that \(u(\cdot,\alpha_0)\) is a ground state. Therefore
	the ground state is unique.
	
	Similarly, for each \(k\ge1\), the   bound state with $k$ simple zero is unique. Indeed, if
	there were two such bound states with initial values \(\gamma<\beta\), then
	the jumping property at \(\gamma\) and monotonicity would give
	$
	\mathcal N(\beta)\ge k+1,
	$
	contradicting the fact that \(u(\cdot,\beta)\) has exactly \(k\) zeros.
	
	The same argument gives the ordering. If \(m>k\) but \(\alpha_m<\alpha_k\), then
	the jumping property at \(\alpha_m\) and monotonicity would imply
	$
	\mathcal N(\alpha_k)\ge m+1>k,
	$
	a contradiction. Hence
	\[
	\alpha_0<\alpha_1<\alpha_2<\cdots .
	\]
	Moreover,
	$
	\alpha_k\to+\infty .
	$
	Otherwise the increasing sequence \(\{\alpha_k\}\) would be bounded above by some
	\(A\). Then monotonicity and the jumping property would give
	\[
	\mathcal N(A)\ge k\ \ \text{for every }k,
	\]
	 contradicting Proposition \ref{pro2.8} (ii), which says that
	every nodal solution has only finitely many sign changes.

	For the  bound state \(u(\cdot,\alpha_k)\), Proposition
	\ref{pro2.6} (iii) gives the uniqueness of the critical point between any two
	consecutive zeros and also the unique critical point behind the last zero.
	Moreover, at each such critical point \(c_i\),
	\[
	|u(c_i)|>\alpha_*.
	\]
	Proposition \ref{pro2.6} (iv) gives compact support and the boundary behavior at the support
	radius. This proves the structure asserted for the unique bound state with $k$ simple zeros.
	
	If \(\alpha<\alpha_0\) and \(\alpha\neq1\), then
	\[
	\mathcal N(\alpha)=0.
	\]
	Indeed, otherwise monotonicity would give \(\mathcal N(\alpha_0)>0\), contradicting
	that \(u(\cdot,\alpha_0)\) is a ground state. Hence \(u(\cdot,\alpha)\) is
	positive. It cannot be a ground state by the uniqueness just proved. Therefore,
	by Proposition \ref{pro2.8} (iv), it is oscillatory; equivalently, by
	Proposition \ref{pro2.7}, it oscillates about \(1\).
	
	Finally, let
	$
	\alpha\in(\alpha_k,\alpha_{k+1}),\  k\ge0.
	$
	The jumping property at \(\alpha_k\) gives
	\[
	\mathcal N(\alpha)\ge k+1.
	\]
	On the other hand, if \(\mathcal N(\alpha)\ge k+2\), then monotonicity would give
	$
	\mathcal N(\alpha_{k+1})\ge k+2,
	$
	contradicting that \(u(\cdot,\alpha_{k+1})\) is the \((k+1)\)-node bound state.
	Thus
	\[
	\mathcal N(\alpha)=k+1.
	\]
	The solution \(u(\cdot,\alpha)\) cannot be a bound state, because the unique
	\((k+1)\)-node bound state occurs at \(\alpha_{k+1}\). Therefore Proposition
	\ref{pro2.8} (iv) implies that \(u(\cdot,\alpha)\) oscillates about \(1\) or
	\(-1\) behind its last zero.
	
	This proves the classification theorem. The uniqueness theorem for bound states
	follows immediately: for each \(k\ge1\), the radial \(k\)-node bound state is the
	solution with initial value \(\alpha_k\), and it is unique up to the symmetry
	\(u\mapsto -u\). The proof is complete.
\end{proof}

\section{Proof of uniqueness for superlinear case in dimension two}
In this section, we show that \cite[Theorems 1 and 2]{Tang2025} remain valid for $n=2$, provided $p>1$.

The main tool used is \cite[Lemma 5.1 (Phase Transition Lemma)]{Tang2025}. The subsequent arguments do not depend on the condition $n \ge 3$. In the proof of this lemma, it is critical to establish the positivity of $T_2$ on $[\bar b_i, c_i]$ (see \cite[Lemma 5.6]{Tang2025}). The original proof relied on either the positivity of $Q_n$ on $[b_i,c_i]$, which was proved in the last part of \cite[Lemma 5.3]{Tang2025} for $n=3, p\in [2,5)$ and $n\geq 4$, or the positivity of an integral for $n=3, p\in(1,2)$ established in \cite[Lemma 5.4]{Tang2025}. Both of these results appear to require $n \ge 3$. Although \cite[Lemma 5.4]{Tang2025} can now be extended to $n \ge 2$ and $p>1$ via our Lemma \ref{lem:residual-integral-alln-sublinear}, we note that for $n=2$, the function $Q_n(r)$ reduces to $Q_2(r)$. Since the positivity of $Q_2(r)$ on $[c_{i-1}, c_i]$ is already assumed a priori by the hypotheses of \cite[Lemma 5.6]{Tang2025}, the proof no longer depends on these two technical lemmas. 

Finally, we note one last difference between the cases $n=2$ and $n \ge 3$, which lies in \cite[Proposition 2.5(iii)]{Tang2025}. When $n=2$, the strict inequalities $P_2>0$ and $[-P(r)/r^n]'>0$ may fail at some isolated points. However, the proof only requires the corollary that $P(r)/r^n$ is strictly decreasing, a property used exclusively in \cite[Lemma 5.6]{Tang2025}. For $n=2$, this strictly decreasing property holds naturally because $P(r)/r^2 = 2E(r)$.

We will indicate the necessary modifications, and it suffices to supply the proof of \cite[Lemma 5.6]{Tang2025} for $n=2$.

\textbf{Notations.}
The radial profile satisfies
\begin{equation}\label{eu2}
u''+\frac{1}{r}u'+f(u)=0,\quad u(0)=\alpha>0,\; u'(0)=0,
\end{equation}
with \(f(u)=-u+|u|^{p-1}u\). The variation \(v=\partial u/\partial\alpha\) solves
\begin{equation}\label{ev2}
v''+\frac{1}{r}v'+f'(u)v=0,\quad v(0)=1,\; v'(0)=0.
\end{equation}
All energy-type functions ($E$, $P$, $Q$, $M$, $T_1$, $T_2$, and $B_a$) are adopted directly from \cite{Tang2025} by setting the dimension $n=2$. For their precise definitions and derivatives, we refer the reader to Tables 1 and 2 in the appendix of that paper. The fundamental algebraic identities and derivative formulas for these quantities remain unchanged. In particular, we have
 \[
 B_a'(r) = -2F_a(u)\frac{Q_n(r)}{r u'^2} = -2F_a(u)\frac{Q_2(r)}{r u'^2}.
 \]
 Furthermore, the basic properties of radial solutions (Propositions 2.1--2.4) hold for $n=2$ without modification, as their proofs rely only on the assumption $n > 1$.

\textbf{Phase 1 and the first zero of \(v\).}
The analysis of Section~4 in the main paper is independent of the restriction \(n\ge 3\). In particular, Lemmas 4.1--4.3 apply verbatim for \(n=2\). Consequently, there exists \(\tau_1\in(0,r_1)\) such that
\[
v>0\;\text{on}\;(0,\tau_1),\quad v(\tau_1)=0,\quad v<0\;\text{on}\;(\tau_1,c_1],
\]
and
\[
Q(r)>0,\; M(r)>0,\; Q_1(r)>0,\; Q_2(r)>0\quad\text{for all }r\in(0,c_1].
\]
Thus \(Q_2(c_1)>0\), and \(Q(c_1)>0\), \(M(c_1)>0\), \(T_2(c_1)>0\) by the same arguments as for \(n\ge 3\).

\textbf{Positivity for $T_2$ on $[\bar b_i, c_i]$.}
We prove   \cite[Lemma 5.6 ]{Tang2025} for $n=2$.
\begin{lemma}[Lemma 5.6 of \cite{Tang2025} for $n=2$]
\label{lem:5.6_n2}
Assume $n=2$. For $i \in \{2, \dots, k\}$, suppose that the following conditions hold:
\begin{align*}
& Q(c_{i-1}) > 0, \quad M(c_{i-1}) > 0, \quad \text{and} \quad Q_1(r) > 0 \quad \text{for } r \in [c_{i-1}, c_i]; \\
& Q(r) > 0 \quad \text{for } r \in (c_{i-1}, b_i] \cup [\overline{b}_i, c_i], \quad \text{and} \quad Q(\overline{b}_i) > Q(b_i); \\
& T_2(r) > 0 \quad \text{on } [c_{i-1}, b_i], \quad \text{and} \quad Q_2(r) > 0 \quad \text{on } [c_{i-1}, c_i].
\end{align*}
Then $T_2(r) > 0$ on $[\overline{b}_i, c_i]$. In particular, $T_2(r) > 0$ on $[\overline{b}_1, c_1]$.
\end{lemma}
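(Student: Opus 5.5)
The plan is to follow the proof of Lemma~\ref{lem:T2-positive-sublinear}, now with the superlinear $f(u)=-u+|u|^{p-1}u$ and $n=2$. For $n=2$ the key integral reduces to one whose integrand has an explicit sign, so the comparison machinery used for $n\ge3$ is not needed.

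\textbf{Step 1: set-up on $[\bar b_i,c_i]$.} By the hypotheses and the argument of \cite[Lemma 5.5]{Tang2025} (Lemma~\ref{lem:Qn-positive-direct-sublinear} in the superlinear setting), $v$ has a unique zero $\tau_i\in(c_{i-1},r_i)$ in Phase $i$. Hence $u'v>0$ on $(\tau_i,c_i]$. Since $F(u)=0$ at $b_i$ and $\bar b_i$, we have $g_2(u)=0$ there, so
\[
T_2(b_i)=Q(b_i)>0,\qquad T_2(\bar b_i)=Q(\bar b_i)>Q(b_i)>0.
\]
At $c_i$ we have $u'(c_i)=0$ and $uv>0$. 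The superlinear formula for $T_2'$ from \cite[Table 2]{Tang2025} then gives $T_2'(c_i)>0$. If $T_2$ has no critical point in $(\bar b_i,c_i)$, we are done.

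\textbf{Step 2: any critical point is a strict minimum.} I use the identity relating $T_2$, $B_0$ and $T_2'$, namely the superlinear analogue of \eqref{eq:T2-B0-identity-sublinear}, which is a purely algebraic consequence of $T_2=Q-g_2M$ and $B_0=Q-2F\,r^{n-1}v/u'$. Differentiating it at a critical point $t_i$ gives
\[
T_2''(t_i)=c\,u u'\,\frac{Q_2(t_i)}{t_i u'^2}
\]
with a positive factor $c$, using $F(u(t_i))>0$. Since $uu'>0$ on $(\bar b_i,c_i)$ and $Q_2>0$ by hypothesis, every critical point is a strict local minimum. Hence there is at most one critical point $t_i$, and it suffices to show $T_2(t_i)>0$.

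\textbf{Step 3: the bridge integral.} Choose $s_i\in(c_{i-1},b_i)$ with $|u(s_i)|=|u(t_i)|=:U$, and set $a_i=g_2(U)$. Then
\[
B_{a_i}(s_i)=T_2(s_i)>0,\qquad B_{a_i}(t_i)=T_2(t_i),
\]
and
\[
T_2(t_i)-T_2(s_i)=\int_{s_i}^{t_i}-2F_{a_i}(u)\,\frac{Q_2(r)}{r\,u'^2}\,dr ,
\]
because $Q_n=Q_2$ when $n=2$.

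\textbf{Step 4: the sign of $-F_{a_i}$ (main obstacle).} For the power nonlinearity,
\[
-2F_{a_i}(u)=-2F(u)+g_2(U)\,h_1(u),\qquad h_1(u)=\tfrac{p-1}{p+1}|u|^{p+1}.
\]
I expect this to simplify to a positive multiple of $u^2\bigl(U^{p-1}-|u|^{p-1}\bigr)$, which is nonnegative exactly when $|u|\le U$. That condition holds on $[s_i,t_i]$, since $|u|$ decreases from $U$ and then rises back to $U$ within Phase $i$. The factor is not identically zero, and $Q_2>0$ throughout, so the integral is positive. Therefore
\[
T_2(t_i)>T_2(s_i)>0,
\]
and since $t_i$ is the minimum of $T_2$ on $[\bar b_i,c_i]$, we get $T_2>0$ there.

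\textbf{The case $i=1$.} This follows identically from the Phase~1 results: $Q_2>0$ on $(0,c_1]$, and $T_2>0$ on $(0,z_1)$. The latter holds because $T_2=T_1+(\text{nonnegative})\,M$ there, with $M>0$ and $T_1>0$. The vanishing of these quantities at $c_0=0$ is handled as in \cite[Lemma 5.6]{Tang2025}.
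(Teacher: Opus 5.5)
Your proposal is correct and takes essentially the same route as the paper's proof. It establishes the unique zero $\tau_i$ and $T_2'(c_i)>0$, shows via $T_2''=(p-1)uu'\,Q_2/(r u'^2)$ that any critical point of $T_2$ in $(\bar b_i,c_i)$ is a strict minimum, and then bridges with $B_{a_i}$ between the two points where $|u|=U$; there $-2F_{a_i}(u)=u^2\bigl(1-|u/U|^{p-1}\bigr)\ge0$ and $Q_n=Q_2>0$ give $T_2(t_i)>T_2(s_i)>0$. The only difference is cosmetic: you derive the formula for $T_2''$ at a critical point from the algebraic $T_2$--$B_0$ identity, as in Lemma~\ref{lem:T2-positive-sublinear}, whereas the paper cites \cite[(5.33)]{Tang2025}.
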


\begin{proof}
	As in \cite{Tang2025}, the phase 1 follows same as phase $i\geq 2$,
	we only consider the case $i\in\{2,\dots,k\}$.
When $n=2$,   the hypothesis $Q_2(r)>0$ on $[c_{i-1},c_i]$   directly implies
\begin{equation}\label{eq:Qnpos}
Q_n(r)=Q_2(r)>0 \qquad \text{for all } r\in[c_{i-1},c_i].
\end{equation}

Since by the initial part of \cite[Lemma 5.3]{Tang2025}, $v$ has exactly one zero $\tau_i$ in $[c_{i-1},c_i]$ with $\tau_i\in(c_{i-1}, r_i)$,
and $u'v>0$ on $(\tau_i, c_i]$.  
Using $T_2'(r)= (p-1)r^{n-1}uv-\frac {(p+1)uu'}{|u|^{p+1}}\cdot M(r)$ we obtain $T_2'(c_i) > 0$.  
Moreover, by \cite[(5.33)]{Tang2025}, any critical point $\overline t_i$ of $T_2$ in $(\overline b_i, c_i)$ satisfies
\[
T_2''(\overline t_i) = (p-1) u u' \varphi_n(\overline t_i)>0,
\]
because $uu'>0$, $u'\neq0$, and $\varphi_n(\overline t_i)=Q_n(\overline t_i)/(\overline t_i u'^2) > 0$.
Hence $T_2$ can have at most one critical point in $(\overline b_i, c_i)$, and any such point is a strict local minimum.

Since $F(u(\overline b_i))=0$, we have $T_2(\overline b_i)=Q(\overline b_i) > 0$ by \cite[(5.32)]{Tang2025}.
If $T_2$ has no critical point in $(\overline b_i, c_i)$, then $T_2$ is strictly increasing (because $T_2'(c_i)>0$), and therefore $T_2(r) \ge T_2(\overline b_i) > 0$.

Now assume $T_2$ has a (unique) local minimum at $\overline t_i \in (\overline b_i, c_i)$.
Define $t_i \in (c_{i-1}, b_i)$ as the unique point such that $u(t_i) = -u(\overline t_i)$ since $|u(c_{i-1})|>|u(c_i)|$. Consequently $|u(t_i)| = |u(\overline t_i)| > \alpha_*$ and $t_i \in (c_{i-1}, b_i)$.

Let $a_i = g_2(u(t_i)) = g_2(u(\overline t_i))>0$ as \cite[(5.35)]{Tang2025}. As in \cite[(5.36)]{Tang2025} we have
\begin{equation}\label{eq:T2B}
T_2(t_i) = B_{a_i}(t_i), \qquad T_2(\overline t_i) = B_{a_i}(\overline t_i).
\end{equation}
Using the derivative formula of $B_a$,
\[
B_{a_i}(\overline t_i) - B_{a_i}(t_i) = \int_{t_i}^{\overline t_i} \bigl[-2F_{a_i}(u(r))\bigr] \frac{Q_n(r)}{r\,u'^2(r)}\,dr.
\]
By \cite[(5.38)]{Tang2025},
\[
-2F_{a_i}(u) = u^2\left(1 - \left|\frac{u}{u(t_i)}\right|^{p-1}\right)  \ge 0, \quad \text{for } |u| \le |u(t_i)|
\]
where   is strict for $|u| < |u(t_i)|$.
On the interval $(t_i, \overline t_i)$, the function $|u(r)|$ satisfies
\[
|u(r)| < |u(t_i)| \quad \text{for all } r \in (t_i, \overline t_i),
\]
because $|u|$ strictly decreases on $(t_i, b_i]$, then falls below $\alpha_*$ on $(b_i, z_i)$,
becomes strictly smaller than $|u(t_i)|$ on $(z_i, \overline b_i)$, and finally increases
back to $|u(t_i)|$ only at $\overline t_i$.  By $Q_n(r) > 0$   on $[c_{i-1},c_i]$, we have
\[
B_{a_i}(\overline t_i) - B_{a_i}(t_i) > 0.
\]
Combined with \eqref{eq:T2B} and the fact that $T_2(t_i) > 0$,
we obtain
\[
T_2(\overline t_i) > T_2(t_i) > 0.
\]
Since $\overline t_i$ is the unique minimizer of $T_2$ on $[\overline b_i, c_i]$, this proves $T_2(r) > 0$ for all $r\in[\overline b_i, c_i]$.
\end{proof}
 
  \appendix
 \section{Appendix: Proof of Proposition \ref{lem:singular-linear}}
 
\begin{proof}
Integrate \eqref{eq:sing}  gives
\begin{equation}\label{eq:volterra}
u(r) =u_0(r)
        + \int_{s_0}^{r} \frac{d \tau}{\tau^{m}} \int_{s_0}^{\tau} t^{m} a(t) u(t)\,dt = u_0(r) + \int_{s_0}^{r} K(r,t)\,a(t)\,u(t)\,dt, \qquad r \in I,
\end{equation}
where if $m>0$
\[
u_0(r) = b + c\,s_0^{m} \int_{s_0}^{r} \tau^{-m} d\tau,\quad K(r,t) = t^{m} \int_{t}^{r} \tau^{-m} d\tau \qquad (t>0),
\]
if $m=0$
\[
u_0(r) = b + c(r-s_0),\quad K(r,t) = (r-t).
\]
In the case $m>0$, $s_0=0$, $u_0(r) \equiv b$ is constant, and   set $K(r,0)=0$.  In the case $s_0>0$, $u_0 \in C^\infty(I)$.

In each case we have 
\[\|u_0\|_{C^1(I)} \le C_0, \qquad 
|K(r,t)| \le M_0, \qquad 
|\partial_r K(r,t)|  \le  M_1,
\]
with $C_0, M_0,M_1>0$ depending on $b,c,s_0,\delta,m$.

\medskip
\noindent\textbf{(i) Existence and uniqueness.}
Define the linear operator $T: C(I) \to C(I)$ by
\[
(Tv)(r) = u_0(r) + \int_{s_0}^{r} K(r,t)\,a(t)\,v(t)\,dt.
\]
Because $a \in L^1(I)$ and $|K| \le M_0$, the integral is well defined and continuous in $r$.  
For any $v,w \in C(I)$,
\[
|Tv(r)-Tw(r)| \le M_0 \int_{\min(s_0,r)}^{\max(s_0,r)} |a(t)|\,|v(t)-w(t)|\,dt
\le M_0 \|v-w\|_\infty \int_I |a(t)|\,dt.
\]
Although the whole interval may not give a contraction, we can split $I$ into finitely many subintervals on each of which the $L^1$ norm of $a$ is smaller than $1/(2M_0)$;  then yields a unique solution $u \in C(I)$ on each subinterval and hence on the whole $I$.  
For $r \neq s_0$, differentiate \eqref{eq:volterra}:
\[
u'(r) = u_0'(r) + \int_{s_0}^{r} \partial_r K(r,t)\,a(t)\,u(t)\,dt.
\]
The integral is continuous in $r$ because $\partial_r K$ is bounded and $a \in L^1$.  When $s_0=0$, the integral is $\int_0^{r} (t/r)^m a u\,dt$, and $|u'(r)| \le \int_0^r |a u| dt \to 0$ as $r\to0^+$, so $u'(0)=0$ and $u'$ is continuous on $[0,\delta]$.  Hence $u \in C^1(I)$.  
For $r \neq s_0$, $a$ is continuous, so the integrand is continuous and we may differentiate once more, obtaining $u''(r) = - m r^{-m-1}\int_{s_0}^r t^m a u\,dt + a(r)u(r)$.  Thus $u \in C^2(I\setminus\{s_0\})$.

\medskip
\noindent\textbf{(ii) Continuous dependence.}
Let $\bar a$ satisfy the same hypotheses and $\bar u$ be the corresponding solution with the same initial data $b,c$.  Assume $\|\bar a - a\|_{L^1(I)} \le 1$; then $\|\bar a\|_{L^1} \le \|a\|_{L^1}+1$.  
From the integral equation for $\bar u$,
\[
|\bar u(r)| \le \|u_0\|_\infty + M_0 \int_{\min(s_0,r)}^{\max(s_0,r)} |\bar a(t)|\,|\bar u(t)|\,dt.
\]
Applying Gronwall’s inequality (integrating from $s_0$ in both directions) gives
\[
\|\bar u\|_\infty \le \|u_0\|_\infty \exp\bigl(M_0 \|\bar a\|_{L^1}\bigr)
\le C_0 e^{M_0(\|a\|_{L^1}+1)} =: C_u.
\]
For the derivative,
\[
|\bar u'(r)| \le \|u_0'\|_\infty + M_1 \int_{\min(s_0,r)}^{\max(s_0,r)} |\bar a(t)|\,|\bar u(t)|\,dt
\le C_0' + M_1 \|\bar a\|_{L^1} C_u \le C_u',
\]
with $C_0' = \|u_0'\|_\infty$.  Thus $\|\bar u\|_{C^1(I)} \le C_1$ for a constant $C_1$ independent of the particular $\bar a$ satisfying the smallness condition.
 
Set $w = \bar u - u$.  Subtracting the integral equations,
\[
w(r) = \int_{s_0}^{r} K(r,t)\bigl[ \bar a(t)\bar u(t) - a(t)u(t) \bigr] dt
     = \varphi(r) + \int_{s_0}^{r} K(r,t)\,a(t)\,w(t)\,dt,
\]
where
\[
\varphi(r) = \int_{s_0}^{r} K(r,t)\,\bar u(t)\,(\bar a(t)-a(t))\,dt.
\]
Then $|\varphi(r)| \le M_0 C_u \|\bar a - a\|_{L^1(I)}$.  
From the equation for $w$ we obtain
\[
|w(r)| \le M_0 C_u \|\bar a - a\|_{L^1} + M_0 \int_{\min(s_0,r)}^{\max(s_0,r)} |a(t)|\,|w(t)|\,dt.
\]
Gronwall’s inequality yields
\[
\|w\|_\infty \le M_0 C_u \|\bar a - a\|_{L^1} \, e^{M_0 \|a\|_{L^1}} =: C_2 \|\bar a - a\|_{L^1}.
\]

For the derivative,
\[
w'(r) = \int_{s_0}^{r} \partial_r K(r,t)\bigl[ \bar a \bar u - a u \bigr] dt
      = \int_{s_0}^{r} \Bigl(\frac{t}{r}\Bigr)^{\!m} \bigl[ \bar u (\bar a - a) + a w \bigr] dt.
\]
Hence
\begin{align*}
|w'(r)| &\le M_1 \int_{\min(s_0,r)}^{\max(s_0,r)} \bigl( |\bar u|\,|\bar a - a| + |a|\,|w| \bigr) dt \\
        &\le M_1 \bigl( C_u \|\bar a - a\|_{L^1} + \|a\|_{L^1} \|w\|_\infty \bigr) \\
        &\le M_1 \bigl( C_u + \|a\|_{L^1} C_2 \bigr) \|\bar a - a\|_{L^1}.
\end{align*}
Thus $\|w'\|_\infty \le C_3 \|\bar a - a\|_{L^1}$ with $C_3 = M_1(C_u + \|a\|_{L^1} C_2)$.  

Finally,
\[
\|\bar u - u\|_{C^1(I)} \le (C_2 + C_3) \|\bar a - a\|_{L^1(I)} =: C \|\bar a - a\|_{L^1(I)},
\]
where $C$ depends only on $b,c,s_0,\delta,m$ and $\|a\|_{L^1(I)}$, as claimed.
\end{proof}

\vskip .1in
\noindent{\bf Conflict of Interest.}
On behalf of all authors, the corresponding author states that there is no conflict of interest.
\vskip .1in
\noindent{\bf Data Availability.}
No datasets were generated or analysed during the current study.

\vskip .1in
 \noindent{\bf Acknowledgement.} 
The  research was supported by   NSFC-12371107, NSFC-11901582.

\end{document}